\newtheorem{proposition}{Proposition}[section]
\newcommand{\R}{\mathbb{R}}
\newcommand{\x}{\mathbf{x}}
\newcommand{\ve}{\varepsilon}
\newcommand{\rmd}{{\rm d}}
\newcommand{\eee}{equation}
\newcommand{\be}{\begin{\eee}}
\newcommand{\ee}{\end{\eee}}
\numberwithin{equation}{section}
\newtheorem{theorem}{Theorem}
\numberwithin{theorem}{section}
\numberwithin{prop}{section}
\newtheorem{cor}{Corollary}
\numberwithin{cor}{section}
\newtheorem{lemma}{Lemma}
\numberwithin{lemma}{section}
\theoremstyle{remark}
\newtheorem*{remark}{Remark}
\theoremstyle{definition}
\newtheorem{defi}{Definition}
\title{\vspace*{-30mm} A geometric characterization of steady laminar flow}
\author{Theodore D. Drivas}
\address{Department of Mathematics, Stony Brook University, Stony Brook, NY, 11790}
\email{tdrivas@math.stonybrook.edu}
\author{Marc Nualart}
\address{Instituto de Ciencias Matem\'{a}ticas, Consejo Superior de Investigaciones Cient\'{i}ficas, 28049 Madrid, Spain}
\email{m.nualart-batalla20@imperial.ac.uk}
\begin{document}
\maketitle
\vspace{-10mm}
\begin{abstract}
We study the steady states of the Euler equations on the periodic channel or annulus.  We show that if these flows are laminar (layered by  closed non-contractible streamlines which foliate the domain), then they must be either parallel or circular flows. We also show that a large subset of these shear flows are isolated from non-shear stationary states. For Poiseuille flow, $(v(y),0)=(y^2,0)$,  our result shows that all stationary solutions in a sufficiently small $C^2$ neighborhood are shear flows.
We then show that if $v(y)=y^n$ with $n\geq 1$, then in any $C^{n-}$ neighborhood, there exist smooth non-shear steady states, traveling waves, and quasiperiodic solutions of any number of non-commensurate frequencies.  This proves the rigidity near Poiseuille is sharp.
 Finally, we prove that on general compact doubly connected domains, laminar steady Euler flows with constant velocity on the boundary must also be either parallel or circular, and the domain a periodic channel or an annulus. This shows that laminar free boundary Euler solutions must have Euclidean symmetry.
\end{abstract}

\vspace{2mm}
\section{Introduction}
The Euler equations for a stationary, ideal incompressible fluid occupying a domain $M$ read
\begin{align}
u\cdot \nabla u &= -\nabla p\quad\  \text{in} \ M,\\
\nabla \cdot u &=0 \quad\quad \ \ \   \text{in} \ M,\\
 u\cdot \hat{n} &=0 \quad\quad \ \ \   \text{on} \ \partial M,
\end{align}
where $\hat{n}$ is normal to the solid boundary $\partial M$.
Stationary states generally come in infinite dimensional families and play an important role in understanding the long time evolution from non-stationary data \cite{bedrossian2015inviscid, choffrut2012local, danielski2024complex, dolce2022maximally,  drivas2023singularity, drivas2024twisting, drivas2022conjugate, onsager1949statistical, shnirelman1993lattice, shnirelman2013long, yudovich2000loss}. In two dimensions, the velocity can be expressed through a Hamiltonian $\psi$ called the streamfunction, via $u=\nabla^\perp \psi$.  In terms of the streamfunction and the vorticity (curl of the velocity) $\omega:=\nabla^\perp \cdot u= \Delta \psi$, steady Euler reads
\begin{align}
\{ \psi, \Delta \psi\} &=0 \quad\  \text{in} \ M,\\
\psi &=c_i  \quad    \text{on} \ \partial M_i,
\end{align}
where $\{f, g\}= \nabla^\perp f \cdot \nabla g$ is the Poisson bracket, $c_i$ are constants and $\partial M_i$ are connected components of the boundary. As such, steady Euler solution can be understood as  built out of a collection of curves (streamlines) along which both the streamfunction and vorticity are constant. This simultaneous relation is highly constraining, making these streamlines act effectively as a bundle of elastic bands, constantly straining to alleviate their tension by minimizing their length while maintaining their enclosed area.  As such, provided the domain $M$ permits, equilibrium configurations often have Euclidean symmetry (they are either straight lines, or perfect circles).  Not always must this be the case; there are some steady states on these domains that break the symmetry, where internal transition layers (accompanied by hyperbolic critical points) divide up  the  flow domain, serving as new effective boundaries.  In this work, we show this is the only possibility:
\begin{theorem}\label{thm:rigidity}
Let $M$ be the straight periodic channel  or the circular annulus. Let $u\in C^2(M)$ be a stationary solution of the Euler equations in $M$.  Suppose that $u$ is laminar in the sense that all its streamlines are non-contractible loops. Then, $u$ is a shear flow or a circular flow respectively.
\end{theorem}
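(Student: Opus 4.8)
\emph{Setup and a global constitutive law.}
The plan is to reduce the nonlinear rigidity to a linear statement about the derivative of $\psi$ in the direction of the putative symmetry, and to manufacture a sign-definite solution of the linearized equation out of the derivative in the transverse direction. Write $M=\mathbb{T}\times[0,1]$ with periodic variable $x$ and transverse variable $y$ for the channel, and $M=\{r_0\le r\le r_1\}$ with angular variable $\theta$ and radial variable $r$ for the annulus; the two boundary components are themselves streamlines, on which $\psi$ equals constants $c_0\neq c_1$. First I would show that laminarity forces $\nabla\psi\neq 0$ in the interior: a nondegenerate interior critical point is either elliptic, whose nearby level sets are small contractible loops, or hyperbolic, whose level set is a figure-eight bounding contractible lobes, and degenerate critical points are excluded by a local normal-form analysis; each possibility contradicts the hypothesis that every streamline is a non-contractible loop. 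Then $\psi$ has no interior extrema, so (orienting $c_0<c_1$) its range is $[c_0,c_1]$ and each level set $\{\psi=c\}$, $c\in(c_0,c_1)$, is a single non-contractible circle. Since $\psi$ and $\omega=\Delta\psi$ are \emph{simultaneously} constant on each such circle and the circles foliate the interior, there is a $C^1$ function $F$ with $\Delta\psi=F(\psi)$ on all of $M$.

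\emph{Linearization in the symmetry direction.}
Rotations (annulus) and horizontal translations (channel) are isometries, so the infinitesimal generator of the symmetry commutes with $\Delta$. Hence $w:=\partial_x\psi$ (channel) or $w:=\partial_\theta\psi$ (annulus) solves the linearized equation
$$\Delta w = F'(\psi)\, w \quad \text{in } M, \qquad w=0 \ \text{ on } \partial M,$$
the boundary condition coming from $\psi$ being constant on each boundary streamline, and $w$ is periodic in the symmetry variable. The theorem is now exactly the assertion $w\equiv 0$, i.e.\ $\psi=\psi(y)$ (shear flow) or $\psi=\psi(r)$ (circular flow).

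\emph{A positive solution and a Picone/factorization identity.}
The transverse derivative $\phi:=\partial_y\psi$ (channel) or $\phi:=\partial_r\psi$ (annulus) solves the \emph{same} linear equation $\Delta\phi=F'(\psi)\phi$. Granting for the moment that $\phi>0$ throughout the interior, I would write $w=\phi z$, which converts the equation into $\div(\phi^2\nabla z)=0$. Multiplying by $z$ and integrating by parts, the periodic boundary terms cancel and the terms on $\partial M$ vanish because $w=0$ there, leaving $\int_M \phi^2|\nabla z|^2=0$. Thus $z$ is constant, and $w=0$ on $\partial M$ forces $z\equiv 0$, hence $w\equiv 0$, which is the desired shear/circular conclusion.

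\emph{The main obstacle: sign-definiteness of the transverse derivative.}
The crux is to prove $\phi=\partial_y\psi>0$ (resp.\ $\partial_r\psi>0$) in the interior, equivalently that every streamline is a graph over the symmetry circle. This is precisely where the Euler structure, and not merely the topology, must be used, since a purely topological foliation by non-contractible circles can contain non-graphical leaves. I would argue by continuation in the streamline label $c$: for $c$ slightly above $c_0$ the leaf is a graph, so $\phi>0$ there; letting $c^\ast$ be the first value at which $\phi$ touches $0$ at an interior point $p$, the region swept out carries a nonnegative solution $\phi$ of a linear elliptic equation vanishing at the interior point $p$, and the strong maximum principle together with the Hopf lemma should force $\phi\equiv 0$ (contradicting $\phi>0$ near the boundary) or otherwise contradict that $p$ is the first interior zero. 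The delicate points I anticipate are (i) the boundary, where stagnation is allowed---for Poiseuille $\partial_y\psi=v(y)=y^2$ vanishes on the lower wall---so that one must confirm $\phi$ degenerates \emph{only} on $\partial M$ and that the ratio $z=w/\phi$ stays integrable in the factorization identity; and (ii) ruling out nodal curves of $\phi$ running into the interior, which I expect to require the closedness of the streamlines together with the constitutive law $\Delta\psi=F(\psi)$, rather than the maximum principle alone.
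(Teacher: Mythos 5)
There is a fundamental gap at the very first step. You claim that laminarity forces $\nabla\psi\neq 0$ in the interior, arguing that elliptic/hyperbolic critical points produce contractible streamlines and that ``degenerate critical points are excluded by a local normal-form analysis.'' This is false: an entire non-contractible loop of critical points is perfectly compatible with laminarity. The simplest counterexample is Poiseuille flow $u=(y^2,0)$, i.e.\ $\psi=y^3/3$, on the channel: every streamline is a horizontal (non-contractible) circle, yet $\nabla\psi=(0,y^2)$ vanishes on the whole interior streamline $\{y=0\}$. Degenerate critical sets need not be isolated points amenable to a normal form; they can be (and, in the cases the theorem is really about, are) whole non-contractible curves, or even open regions. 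Handling exactly this stagnation is the entire content of the theorem beyond the earlier non-vanishing results of Hamel--Nadirashvili, and it is why the paper decomposes $M$ into sub-domains bounded by ``singular streamlines'' (on which $\nabla\psi\equiv 0$), derives overdetermined semilinear problems $\Delta\psi=f(\psi)$ with the extra free-boundary condition $\nabla\psi=0$, and only then proves one-dimensionality. Your reduction to a single global equation $\Delta\psi=F(\psi)$ with $F\in C^1$ on all of $M$ and nonvanishing $\nabla\psi$ simply assumes away the hard case, and your Picone factorization $w=\phi z$ with $\phi=\partial_y\psi$ cannot survive it, since $\phi$ vanishes identically on interior singular streamlines.

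Even in the non-stagnant regime your argument is incomplete at the step you yourself flag as the crux: positivity of $\phi=\partial_y\psi$ (equivalently, that every leaf is a graph over the symmetry circle). The continuation-in-$c$ sketch does not explain why the first interior zero of $\phi$ yields a contradiction --- $\phi\geq 0$ on a region with an interior zero gives $\phi\equiv 0$ there by the strong maximum principle only after one knows the region is open, connected, and that $F'(\psi)$ is locally bounded, and none of this rules out nodal sets of $\phi$ emanating from the boundary where stagnation is allowed. The paper's actual mechanism for this transverse monotonicity is Brock's continuous Steiner symmetrization (local symmetry) combined with a sliding/reflection argument, which is substantial machinery that your sketch does not replace. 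In short: the Picone-identity endgame is a reasonable idea for the strictly non-degenerate case, but the proposal as written does not prove the stated theorem.
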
	

 Hamel and Nadirashvili  \cite{hamel2017shear, hamel2019liouville, HM23} proved the same conclusion under the assumption of \textit{non-vanishing} velocity.  This, indeed, ensures that the flows they consider are laminar. However, due to the assumed non-trivial velocity, the streamlines all act like elastic bands with positive tension.  By contrast, our result assumes on $u$ only the geometric assumption of being laminar, and allows for regions of slack (zero velocity) to be present in some of the streamlines. It is important to remark that the periodicity assumption cannot be removed; indeed there are examples of flows on the infinite strip which are laminar and non-shear \cite{de2024monotone, gui2024classification}!   In this sense, our theorem is sharp.  Our method of proof further allows open regions of zero velocity confined on non-contractible maximally connected level sets of the associated stream-function, see Thm. \ref{thm:levelsetrigidity} in \S \ref{zeroregion}.
Many rigidity results for 2D Euler have appeared in various contexts, see  \cite{CastroLear23, CastroLear24b, CastroLear24, CDG21, CDG22, coti2023stationary, elgindi2024classification, gui2024classification, huang2023rigidity, LinZeng, nualart2023zonal, Ruiz, WangZhan}.

We now give some intuition for our main result, Theorem \ref{thm:rigidity}. Let $M$ be the annulus and $\psi_0: M\to \mathbb{R}$ be a function without critical points in $M$. In particular, $u_0=\nabla^\perp \psi_0$ is a laminar flow.  Let $\mathcal{O}_{\psi_0}$ be the orbit of $\psi_0$ in the area preserving diffeomorphism group $\mathcal{D}_\mu(M)$:
\be
\mathcal{O}_{\psi_0}  = \{ \psi:M \to \mathbb{R} \ : \ \psi= \psi_0\circ \varphi, \ \text{for some} \ \varphi\in \mathcal{D}_\mu(M)\}.
\ee
Let $E[\psi]$ be the Dirichlet energy of a function $\psi: M\to \mathbb{R}$:
\be
E[\psi]= \frac{1}{2} \int_M |\nabla \psi |^2 dA.
\ee
It is well known that if $\psi_0$ extremizes the energy among all $\psi\in \mathcal{O}_{\psi_0}$, then $\psi_0$ is the streamfunction of a stationary solution \cite{arnold1966geometrie, arnold2009topological}. 
We now explain why this indicates $\psi_0$ should be radial.
We begin by expressing the energy by the coarea formula  (eliciting the analogy with constant tension elastic bands, with each streamline contributing an energy equal to its velocity-weighted length) as
\be
E[\psi]= \frac{1}{2} \int_{\psi^{-1}(M)} \rmd c \oint_{\{\psi = c\}} |\nabla \psi| \rmd \ell
\ee
where $\rmd \ell$ is the one-dimensional Hausdorff measure on the level curves on $\psi$.  Now, by Cauchy–Schwarz:
\be\label{ineq}
{\rm length}(\{\psi = c\}) \leq \mu_\psi(c) ^{1/2} \left(\oint_{\{\psi = c\}} |\nabla \psi| \rmd \ell\right)^{1/2}
\ee
where $\mu_\psi(c) :=  \oint_{\{\psi = c\}}\frac{ \rmd \ell}{ |\nabla \psi|}$ is the period of revolution of a particle on the level curve $\{\psi = c\}$.
Note that $\mu_\psi(c) $ is constant on the orbit $\psi\in \mathcal{O}_{\psi_0}$, since $\mu_\psi(c) = \frac{\rmd}{\rmd c} {\rm area}(\{\psi \leq c\})$ by the co-area formula, and areas of sublevel sets are preserved on the orbit $\mathcal{O}_{\psi_0}$. We have then
\be\label{energylowerbound}
E[\psi]\geq  \frac{1}{2} \int_{\psi^{-1}(M)} \frac{{\rm length}(\{\psi = c\})^2}{\mu_\psi(c) }  \rmd c.
\ee
Now, let $\psi_*\in \mathcal{O}_{\psi_0}$ be the unique radial rearrangement of $\psi_0$.   Since $\psi_*\in \mathcal{O}_{\psi_0}$, we have $\psi^{-1}(M)=\psi_*^{-1}(M)$ and  $\mu_\psi(c) =\mu_{\psi_*}(c)$.  Moreover, by the isoperimetric inequality, we have 
\be
{\rm length}(\{\psi = c\})\geq {\rm length}(\{\psi_*= c\}),
\ee
since the curves encircle regions of the same area. Thus
\be
E[\psi]\geq  \frac{1}{2} \int_{\psi^{-1}(M)} \frac{{\rm length}(\{\psi_* = c\})^2}{\mu_{\psi_*}(c) }  \rmd c=  E[\psi_*],
\ee
since the inequality \eqref{energylowerbound} is equality for radial $\psi=\psi_*$.   As such,  unless $\psi_0$ is radial,  the energy can be reduced by a suitable wiggling of the streamlines. This clearly illustrates why, in the course of settling in the minimum energy state, streamlines strive for minimal length.  See Figure \ref{figuremin}.

		\begin{figure}[h!] 
		\centering
			\includegraphics[width=0.65\textwidth]{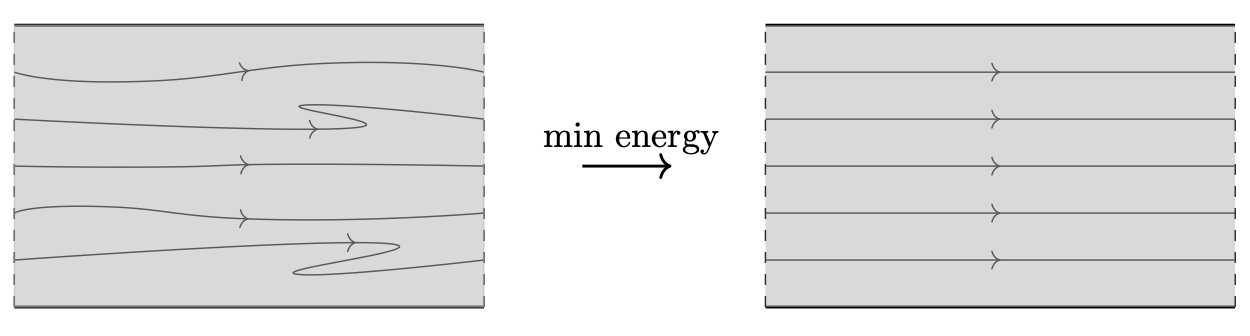} 
					\caption{Energy minimization procedure on the channel.} \label{figuremin}
	\end{figure}

 Although this argument serves as motivation for Theorem \ref{thm:rigidity}, it does not establish it for a couple reasons.  First, it assumes no stagnation points, otherwise the travel time will diverge.  Secondly, although it shows that the \textit{(global) minimal energy state} will always inherit the symmetry, it does not rule out the presence of some asymmetric local  minimums or saddle points (the presence of which is now ruled out by our main Theorem).   For this reason, here we take an entirely different viewpoint, one of over-constrained elliptic problems,  to prove Theorem \ref{thm:rigidity}.

Next, we list some consequences of our Theorem \ref{thm:rigidity} and its proof. Firstly, it shows that all steady states whose vorticity does not have any critical points are shear flows. Secondly, the presence of islands is still the only obstacle to Euclidean symmetry of the solution in perturbed domains, where recirculating eddies may form \cite{drivas2024islands}, if we further assume constant velocity on the boundaries. Indeed, a corollary of the proof of Theorem \ref{thm:rigidity} is
\begin{cor}\label{cor:rigidfreeboundary}
Let $M$ be an open bounded domain, doubly connected and such that $M\subset \mathbb{T}\times (0,1)$ or $M\subset \R^2$. Let $u\in C^2(\overline{M})$ be a steady laminar solution of the Euler equations in $M$ such that $|u|=a_i\geq 0$ on $\partial M_i$. Then, $u$ is a shear flow or a circular flow, respectively. 
\end{cor}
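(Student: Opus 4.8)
The plan is to read Corollary \ref{cor:rigidfreeboundary} as the free-boundary, Serrin-type counterpart of Theorem \ref{thm:rigidity}: here the domain is not prescribed in advance, and the extra hypothesis $|u| = |\nabla\psi| = a_i$ on $\partial M_i$ \emph{overdetermines} the semilinear problem $\Delta\psi = F(\psi)$ that holds along the laminar foliation (with $F$ the profile obtained from $\{\psi,\Delta\psi\}=0$). By Bernoulli's theorem the quantity $\tfrac12|u|^2 + p$ is constant along each streamline, so, since each boundary component $\partial M_i$ is itself a closed streamline carrying the constant speed $a_i$, this hypothesis is equivalent to the pressure being constant on each boundary component. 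The goal is to show that these over-constraints force $M$ to be a flat channel $\mathbb{T}\times(b_1,b_2)$ (resp. a concentric annulus), at which point Theorem \ref{thm:rigidity} immediately yields that $u$ is a shear (resp. circular) flow.

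First I would record the structural consequences of laminarity: all level sets of $\psi$ are non-contractible loops, so $\psi$ has no strict interior extrema, its critical set is confined to full non-contractible level curves (the zero-velocity sets governed by Thm.\ \ref{thm:levelsetrigidity}), and away from those sets $\psi$ is a monotone boundary-to-boundary submersion with $\psi=c_1$ on one component and $\psi=c_2$ on the other. This monotone foliation is exactly what makes the comparison arguments below well posed and removes the interior extrema that would otherwise obstruct the maximum principle.

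The engine I would use is the over-constrained elliptic machinery behind Theorem \ref{thm:rigidity}, namely the method of moving planes and Aleksandrov reflection. Writing $\psi^\lambda$ for the reflection of $\psi$ across a moving hyperplane, the difference $w=\psi-\psi^\lambda$ solves a linear equation $\Delta w = c\,w$ with $c\in L^\infty$ (by Lipschitzness of $F$ on the submersion region), so Hopf's lemma and the strong maximum principle apply. The key point is that the step in which the proof of Theorem \ref{thm:rigidity} exploits the reflection symmetry of the reference domain requires the reflected boundary to carry the \emph{same} Cauchy data; on the exact channel or annulus this holds automatically, whereas on a general doubly connected $M$ it is precisely the hypothesis $|\nabla\psi|=a_i$, i.e.\ Serrin's boundary condition, that supplies it and lets the corner lemma exclude strict inequality at the first point of internal tangency. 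In the planar case, running the reflection for every direction forces both boundary loops to be concentric circles, so $M$ is a standard annulus; in the channel case, transverse reflection compatible with the $x$-periodicity forces $M=\mathbb{T}\times(b_1,b_2)$. In either case Theorem \ref{thm:rigidity} then applies on the identified standard domain and closes the argument.

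The main obstacles will be threefold. First, the domain is doubly connected, so one must ensure that the reflected cap does not prematurely collide with the \emph{other} boundary component, and must track the two distinct constants $a_1,a_2$ so that tangency is tested against the correct component; the laminar monotonicity of $\psi$ is what keeps the comparison well-defined across the hole. Second, the degenerate case $a_i=0$ is genuinely harder: the flow stagnates on a boundary streamline where $\nabla\psi=0$, so the classical Hopf lemma fails, and here I would instead feed the boundary into the zero-velocity level-set rigidity of Thm.\ \ref{thm:levelsetrigidity}. Third, and most delicate, is the channel case, where upgrading the single reflection symmetry produced by one sweep into full $x$-translation invariance of both domain and solution requires a careful periodic adaptation of the moving-plane argument; I expect this to be the crux of the proof.
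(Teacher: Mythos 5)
Your overall framing --- read the corollary as a Serrin-type overdetermined problem, use the laminar structure to get the semilinear equation $\Delta\psi=f(\psi)$ with $|\nabla\psi|=a_i$ on the free boundaries, and reduce to Theorem \ref{thm:rigidity} once the domain is identified as a standard annulus or flat channel --- matches the paper, and for the planar case a full-rotation moving-plane argument is indeed how the cited result (Theorem 1.15 of \cite{WangZhan}) works. But the two points you yourself flag as delicate are left unresolved, and your proposed resolutions would not go through. First, in the periodic channel there is no moving-plane sweep in the $x$-direction: $\mathbb{T}$ is compact, so there is no half-space from which to start the reflection, and "transverse reflection compatible with the $x$-periodicity" is not a defined procedure. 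The paper's actual mechanism is two-staged and genuinely different from a single reflection argument: a \emph{vertical} moving-plane argument (Lemma \ref{lemma:graphlikeboundary}) is used only to prove $\partial_y\psi<0$ on the free boundaries and hence that $\Gamma_0,\Gamma_1$ are $C^2$ graphs $y=h_i(x)$; then an \emph{oblique sliding} argument, with direction $\xi=(\xi_1,\xi_2)$ chosen so that $\xi_2/\xi_1$ lies in the range of the slopes $h_i'$, produces a first touching time $\tau_1$ at which a tangency of $\Gamma_i$ with its translate forces $w^{\tau_1}\equiv 0$ by Hopf's lemma, which is then contradicted using periodicity of the graphs unless $h_i'\equiv 0$. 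This oblique sliding step is the missing idea in your sketch.

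Second, the degenerate cases $a_i=0$ cannot be "fed into Theorem \ref{thm:levelsetrigidity}": that theorem concerns interior zero-velocity level sets of a flow already posed on the exact channel or annulus, whereas here the stagnant set is an unknown free boundary of an unknown domain. Relatedly, your claim that $w=\psi-\psi^{\lambda}$ satisfies $\Delta w=cw$ with $c\in L^\infty$ fails up to a boundary component where $a_i=0$, since $f$ is only guaranteed $C^1$ on the open interval $(0,1)$ and continuous at the endpoints. The paper splits this into: (i) $a_0=a_1=0$, which is already Proposition \ref{prop:rigidtwofree}; (ii) $a_i=0$ with $f(i)=0$, handled by Brock's local symmetry applied to $(\varphi_\lambda-\epsilon)_+$ rather than by any reflection comparison; and (iii) $a_i=0$ with $f(i)\neq 0$, where the boundary is $C^2$ by Lemma \ref{lemma:streamlineregularity} and one must invoke the quasi-Lipschitz bound $|f(\psi(\x_1))-f(\psi(\x_2))|\lesssim d_D^{-1}|\psi(\x_1)-\psi(\x_2)|$ together with the Hopf and Serrin corner lemmas for \emph{singular} operators from \cite{Ruiz}. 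Without these substitutes for the classical Hopf lemma, the tangency and orthogonality cases of your reflection argument cannot be closed when the velocity vanishes on a boundary component.
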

Such a problem arises for fluids with vacuum interfaces, see \cite{CDG22}. The above Corollary has two main implications. On the one hand, it shows that non-parallel steady Euler solutions on wrinkled domains either must have rich Lagrangian dynamics including islands of fluid, or the velocity field on the boundaries is non-constant. On the other hand, it implies that laminar free boundary fluid equilibria  must possess Euclidean symmetry.

Next, we show that certain of these steady laminar flows on the channel are robustly isolated from non-laminar ones:
\begin{theorem}\label{mainshearrigid}
Let $v(y)$ be such that $v(y_i)=0$ while $v''(y_i)\neq 0$ for finitely many $y_i\in (-1,1)$. Then, any steady state $u$ sufficiently close in $C^2$ to the shear flow $(v(y),0)$ is itself a shear flow.
\end{theorem}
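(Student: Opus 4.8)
The plan is to set up the problem as a bifurcation/rigidity argument using the stationary Euler structure $\{\psi, \Delta\psi\} = 0$, which says $\nabla^\perp \psi$ and $\nabla \Delta \psi$ are parallel, i.e. $\omega = \Delta \psi = F(\psi)$ locally wherever $\nabla \psi \neq 0$. For the base shear flow $(v(y),0)$, the streamfunction is $\psi_0(y) = \int_0^y v(s)\,ds$, and the relation reads $v'(y) = F_0(\psi_0(y))$. The critical assumption $v(y_i) = 0$ with $v''(y_i) \neq 0$ means $\psi_0$ has nondegenerate critical points precisely on the lines $y = y_i$ (these are the candidate transition layers from the introduction), and away from these lines $\psi_0$ is strictly monotone in $y$, so the streamfunction relation $\omega = F_0(\psi)$ is genuine and $F_0$ is well-defined on each monotonicity interval. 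The key quantitative input is that at $y_i$, since $v(y_i)=0$ but $v''(y_i)\neq 0$, the nonlinearity $F_0$ has a controlled (finite, nonvanishing second-derivative type) behavior, which prevents the spectral degeneracy that would allow symmetry-breaking eigenfunctions to appear.

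First I would reduce to showing that a nearby steady state $u$ inherits the same $y$-monotonicity structure: by $C^2$-closeness, the nearby streamfunction $\psi$ has its critical set confined to thin neighborhoods of the lines $y = y_i$, and on the complementary strips $\nabla \psi \neq 0$ so that $\omega = F(\psi)$ for some function $F$ close to $F_0$. Then I would linearize: writing $\psi = \psi_0 + \phi$ and the semilinear relation $\Delta \psi = F(\psi)$, the perturbation $\phi$ satisfies an elliptic equation of the form $\Delta \phi - F'(\psi_0)\phi = (\text{error})$ with periodic boundary conditions in $x$ and Dirichlet-type conditions coming from the fixed boundary values on $\partial M$. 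Expanding $\phi$ in Fourier modes $e^{ikx}$ in the periodic direction, the $x$-independent mode $k=0$ is exactly the shear part, while the goal is to show every mode $k\neq 0$ must vanish. Each such mode solves a Sturm–Liouville problem $-\partial_y^2 \phi_k + (k^2 + F'(\psi_0))\phi_k = 0$ with appropriate boundary conditions, and the point is that for $k \neq 0$ the $k^2$ shift pushes the operator strictly positive (or invertible), forcing $\phi_k \equiv 0$.

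The main obstacle will be handling the neighborhoods of the critical lines $y = y_i$, where $\nabla \psi_0$ vanishes and the streamfunction relation $\omega = F_0(\psi)$ becomes singular (the map $y \mapsto \psi_0(y)$ is two-to-one and $F_0'$ can blow up). Here the nondegeneracy $v''(y_i) \neq 0$ is essential: it guarantees that $\psi_0$ has a clean quadratic (Morse) profile near $y_i$, so that $F_0(\psi) \sim v''(y_i)\sqrt{\cdots}$ behaves in a controlled way, and crucially that the linearized potential $F'(\psi_0)$, while possibly singular at $y_i$, is integrable and does not destroy the positivity of the Sturm–Liouville operator for $k \neq 0$. I would treat these critical strips via a matched analysis: use the global rigidity of Theorem \ref{thm:rigidity} (the nearby flow is laminar because $C^2$-closeness preserves the non-contractible loop structure of streamlines away from the layers) to localize, and then show that the finitely many hyperbolic critical points forced by $v''(y_i)\neq 0$ cannot persist as genuine islands without violating the closeness assumption.

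The remaining technical work is to verify that $C^2$-smallness of the perturbation indeed keeps the critical set of $\psi$ trapped near the lines $y = y_i$ (an implicit function theorem / nondegeneracy argument using $v''(y_i) \neq 0$) and to rule out that $F$ jumps between different branches across the layers; once the mode-by-mode Sturm–Liouville positivity for $k \neq 0$ is established uniformly across the strips, one concludes $\phi = \phi_0(y)$, i.e. $u$ is itself a shear flow. I expect the spectral/positivity estimate near the degenerate lines to be the crux, with everything else being a perturbation of the clean monotone case.
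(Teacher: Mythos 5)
Your proposal takes a linearization/spectral route that is genuinely different from the paper's argument, and it has a gap that I do not believe can be closed: the assertion that for every Fourier mode $k\neq 0$ the operator $-\partial_y^2+k^2+F_0'(\psi_0)$ is strictly positive. Along the background shear one computes $F_0'(\psi_0(y))=v''(y)/v(y)$ — this is exactly the Rayleigh potential at wave speed $c=0$ — and it is not controlled by the hypotheses $v(y_i)=0$, $v''(y_i)\neq 0$. For example, $v(y)=\sin(m\pi y/2)+\delta$ with $\delta$ small and $m$ large has only simple zeros with $v''\neq 0$ there, yet $v''/v\approx -(m\pi/2)^2$ on most of the channel, so the $k^2$ shift does not restore positivity and the mode-by-mode argument cannot conclude $\phi_k\equiv 0$; the theorem nevertheless applies to this flow. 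There are two further structural problems. First, you misread the nondegeneracy condition: $v''(y_i)\neq 0$ is a Morse condition on the \emph{vorticity} $\omega=-v'$ (it gives $\partial_y\omega\neq 0$ near $y=y_i$), not on the streamfunction; for Poiseuille, $\psi_0$ is injective with a degenerate (cubic) critical point, so the ``two-to-one, square-root branch'' picture of $F_0$ is incorrect in the paper's flagship case, and at simple zeros of $v$ the potential $v''/v$ has a non-integrable $1/(y-y_i)$ singularity that your matched analysis would have to confront head-on. Second, your spectral step never uses the $C^2$ topology quantitatively, whereas Theorem \ref{flexthm} shows the statement is \emph{false} in any weaker H\"older topology for $(y^2,0)$; any correct proof must exploit $C^2$-closeness of the velocity (equivalently $C^1$-closeness of the vorticity) in an essential way.

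That is precisely what the paper does, by a topological rather than spectral mechanism. Since $\|u-(v,0)\|_{C^2}$ is small, $\omega_\varepsilon$ is $C^1$-close to $-v'$, and $v''(y_i)\neq 0$ forces $\nabla\omega_\varepsilon\neq 0$ in a thin tube around each stagnation line; hence the level sets of $\omega_\varepsilon$ there are closed non-contractible $C^1$ curves. Because $u_\varepsilon\cdot\nabla\omega_\varepsilon=0$, the velocity is tangent to these vortex lines, and Lemma \ref{lemma:mainellipticeq} applied with the roles of $\psi$ and $\omega$ exchanged gives $\psi_\varepsilon=g(\omega_\varepsilon)$ in the tube, so the streamlines inherit the non-contractible topology; away from the stagnation lines $u^1_\varepsilon\neq 0$ and laminarity is automatic. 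Patching the finitely many tubes shows $u_\varepsilon$ is laminar on all of $M$, and Theorem \ref{thm:rigidity} then yields that it is a shear flow. If you wish to salvage a perturbative scheme, the positivity claim must be replaced by this structural use of the vorticity as the organizing level-set function near the stagnation lines.
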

Note that this theorem does not require the shear flow be non-stagnant, nor does it require any information on the vorticity along the stagnation curves. In the case of non-stagnant shear flows, the isolation takes place in much weaker topologies: $L^{2+}$ on vorticity (see Theorem \ref{rigiditynonstagnant}).  Theorem \ref{mainshearrigid} implies that Poiseuille flow  $v(y) = y^2$ is isolated in the $C^2$ topology from non-shear flows. In fact, any steady Euler solution $u$ such that $\Vert u - (y^2, 0)\Vert_{C^2(M)} < 2$ is necessarily a shear flow, since $\nabla\omega \neq 0$. This {regularity}/rigidity is sharp, as our next result shows

\begin{theorem} \label{flexthm}
Let $n\geq 1$ and consider $u_*(x,y) = (y^n,0)$ on $(x,y)\in M :=\mathbb{T}\times [-1,1]$. Then, for any $\ve>0$, there exist \textit{smooth} non-shear stationary states, time periodic and quasiperiodic of any number of non-commensurate frequencies, $u(t)$, such that $\|u-u_*\|_{C^{n-}(M)}<\ve$ for all time.
\end{theorem}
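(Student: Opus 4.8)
The plan is to realize all three families as traveling waves bifurcating from $u_*$, together with weakly interacting superpositions of several such waves at incommensurate speeds. I begin by recording the vorticity--stream relation of the base flow. Writing $u=\nabla^\perp\psi$ and $\omega=\Delta\psi$, the shear $u_*=(y^n,0)$ has $\psi_*(y)=-y^{n+1}/(n+1)$ and $\omega_*=-ny^{n-1}$, so along the shear $\omega_*=F(\psi_*)$ with a profile $F(s)$ behaving like $|s|^{(n-1)/(n+1)}$ near the degenerate value $s=0$ (the line $y=0$). Thus $F$ is only H\"older continuous there, in sharp contrast to the non-degenerate settings underlying the rigidity results; this limited smoothness of the profile is the analytic source of flexibility and, as will be seen, of the sharp $C^{n-}$ threshold.

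A traveling wave $u(x,y,t)=\tilde u(x-ct,y)$ is stationary in the moving frame, and its stream function solves the semilinear elliptic problem $\Delta\psi=F(\psi-cy)$ on $M$ with $\psi$ constant on $y=\pm1$. Linearizing about $\psi_*$ and separating the Fourier mode $e^{ikx}$ yields the Rayleigh-type equation $g''-k^2g=\tfrac{n(n-1)y^{n-2}}{y^n+c}\,g$, whose potential has a critical layer at the height $y_c$ with $y_c^n=-c$; as $c\to0$ this layer merges with the stagnation line $y=0$, where the indicial exponents of the equation are $n$ and $1-n$. I would then run a Crandall--Rabinowitz bifurcation: locate a discrete set of pairs $(c,k)$ at which this Dirichlet problem has a one-dimensional kernel, verify the transversality condition, and conclude that a local branch of genuinely $x$-dependent --- hence non-shear --- smooth traveling waves emanates from the shear. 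A single branch gives time-periodic solutions with temporal frequency $ck$.

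The crux, and the step I expect to be hardest, is the sharp regularity count. Because the critical layer sits where $u_*$ vanishes to order $n$, the emerging cat's-eye island concentrates at a scale $\delta\to0$ controlled by the bifurcation amplitude; rescaling the elliptic problem near $y=0$ with the anisotropy dictated by $v\approx y^n$ should give $\|u-u_*\|_{C^s}\to0$ for every $s<n$ while $\|u-u_*\|_{C^n}$ remains bounded below. This both delivers the $C^{n-}$ closeness and exhibits its sharpness, since $C^n$-closeness would contradict the rigidity of Poiseuille flow (the case $v(y)=y^2$ of Theorem \ref{mainshearrigid}); at $n=1$ the construction degenerates into the classical Kelvin--Stuart/Lin--Zeng cat's eyes near Couette, sharp at $C^{1-}$. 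Pinning the concentration exponent at exactly $n$ --- equivalently, controlling the singular indicial mode $y^{1-n}$ through the critical layer and its nonlinear resolution into a smooth eye --- is where I anticipate needing matched asymptotics or weighted-space estimates adapted to the order-$n$ degeneracy.

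To obtain quasiperiodic solutions with any prescribed number $m$ of non-commensurate frequencies, I would excite $m$ critical layers at distinct heights $y_1<\dots<y_m$ clustering at $y=0$, each carrying its own bifurcating wave of wave number $k_j$ and speed $c_j=-y_j^n$. Each wave is localized about its layer, so the layers couple only through their far fields, which in the channel reduce to constant horizontal drifts; a Lyapunov--Schmidt fixed-point argument on this weakly coupled multi-bump ansatz should produce a genuine Euler solution whose temporal content is the superposition of the frequencies $\omega_j=c_jk_j$. Choosing the $\omega_j$ rationally independent makes the motion quasiperiodic with $m$ frequencies, while the same concentration estimate keeps the entire object within $\ve$ of $u_*$ in $C^{n-}$ uniformly in time. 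Throughout, the recurring obstacle is the confluence of the critical layers with the order-$n$ stagnation point, which forces the bifurcation into singular function spaces and simultaneously fixes the optimal regularity at $C^{n-}$.
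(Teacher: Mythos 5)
Your proposal is a program, not a proof, and it diverges fundamentally from the paper's argument; several of its load-bearing steps are left as expectations and at least one would fail as stated. The most concrete problem is the quasiperiodic construction: a superposition of traveling waves moving at different speeds $c_j$ is not a solution of the Euler equations unless the nonlinear interaction terms vanish, and your claim that the layers ``couple only through their far fields, which \ldots reduce to constant horizontal drifts'' is not correct. A genuinely $x$-dependent bifurcating wave in the channel has a far field of the form $e^{ikx}g(y)$ with $g$ exponentially decaying but not identically zero away from its critical layer, so two such waves at distinct heights interact at all orders through the Biot--Savart law; turning the multi-bump ansatz into an exact quasiperiodic solution is a small-divisor/KAM-type problem, not a routine Lyapunov--Schmidt fixed point. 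Second, the bifurcation itself is not established: for $n\geq 2$ the linearized potential $n(n-1)y^{n-2}/(y^n+c)$ is singular at a critical layer that merges with an order-$n$ stagnation line as $c\to 0$, the profile $F$ is only H\"older there, and you do not verify the one-dimensional kernel or transversality in any function space where Crandall--Rabinowitz applies. Third, the sharp $C^{n-}$ closeness and the smoothness of the bifurcating states (whose streamline pattern contains hyperbolic points and separatrices, across which $\omega$ as a function of $\psi$ need not be smooth) are asserted via an anticipated rescaling, not proved.

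The paper's actual proof is elementary and sidesteps every one of these difficulties. One replaces $v(y)=y^n$ by $v_\varepsilon(y)=v(y)\chi(y/\varepsilon)$, creating a band $\mathbb{T}\times(-\varepsilon,\varepsilon)$ of exactly zero velocity, and checks by direct scaling that $\|v-v_\varepsilon\|_{C^{n-1,\alpha}}\lesssim \varepsilon^{1-\alpha}$ (Lemma \ref{lemma:Cnalpha approx}). Inside the dead band one inserts a compactly supported smooth radial vortex of size $O(\varepsilon)$ in $C^n$ (Lemma \ref{lemma:radialstream}); since the background vanishes identically on its support, the two pieces do not interact at all and the sum is an exact smooth steady state. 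Time-periodic and quasiperiodic solutions come from nesting further vortices inside annuli of solid body rotation, where the ambient motion is an exact rigid rotation, so the frequencies are prescribed exactly and independence is arranged by choice of angular velocities. The exact decoupling is what makes the construction rigorous where your weakly-coupled ansatz is only heuristic; the price is that these solutions are structurally non-generic (non-Morse stream functions), whereas your bifurcation route, if it could be carried out, would produce stable, genuinely two-dimensional cat's-eye structures of the Lin--Zeng type.
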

		\begin{figure}[h!] 
		\centering
			\includegraphics[width=0.9\textwidth]{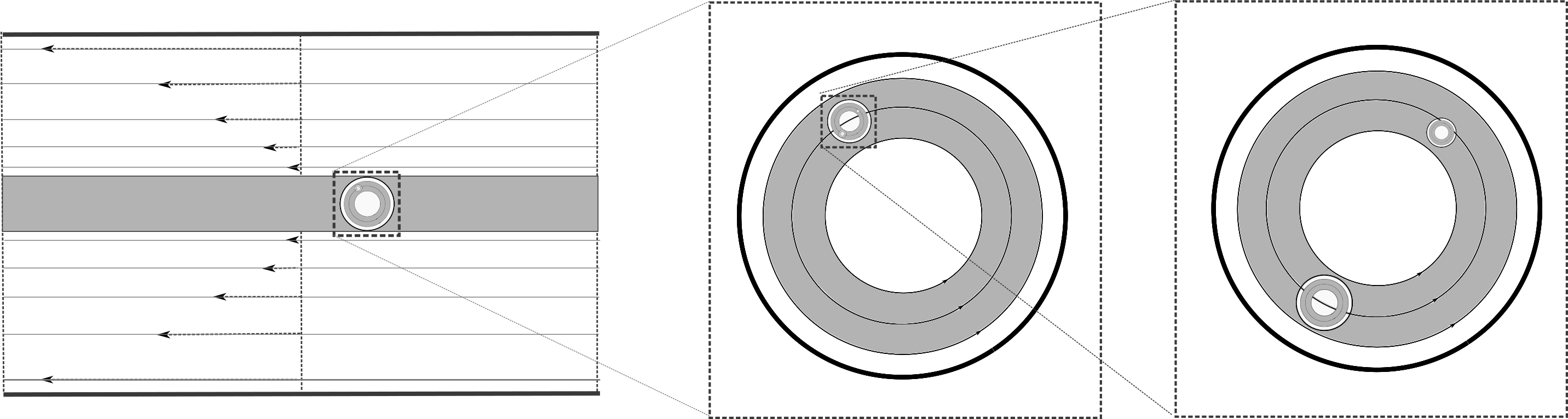} 
					\caption{A non-shear quasi-periodic solution nearby Poiseuille flow: radial vortex embedded in region of constant (zero) flow velocity, approximating Poiseuille flow.  A hierarchy of vortices periodically rotating embedded in regions of solid body rotation within each vortex.  Isochronal regions colored in gray.} \label{figure}
	\end{figure}
	
Of course the result holds for more general shear profiles; the only salient feature is the order of degeneracy near zeros (see Corollary \ref{cor:genflex}), which dictates the regularity of the H\"{o}lder spaces in which proximity is measured. The result is proved in Section \ref{flexsec}, but the idea of the construction is very simple, following along the same lines as Crouseilles--Faou \cite{crouseilles2013quasi}. First, we approximate the shear flow $v(y) = y^n$ by a zero velocity field in a small region. Then, we embed a compactly supported radial velocity field inside the approximating region. Since the background velocity is zero, this radial field constitutes a steady nonshear solution to Euler.  These circular vortices may in turn have enumerable other rotating  vortices living within them, in annular regions of solid body rotation (which have constant vorticity). See Figure \ref{figure}.

Two observations are in order. Firstly, for Couette flow $u_*=(y,0)$, Lin and Zeng proved \cite{LinZeng} that there are non-shear flows arbitrarily close to it in $H^{\frac52-}$ and that it is instead isolated in the $H^{\frac52+}$ topology. By Sobolov embedding, Couette is then isolated in the $C^{2,\frac12+}$ topology, but not in $C^{1,\frac12-}$, which is almost half a derivative better than our Theorem \ref{flexthm}.\footnote{This gap in the flexibility/rigidity threshold in H\"{o}lder regularity is due to the Sobolev embedding, since $H^{s+1}\subset C^s\subset H^s$, for all $s\geq 0$.} Secondly, we mention the result \cite[Theorem 1.3]{sinambela2023transition}, which shows the existence of non-shear steady solutions arbitrarily close to suitably chosen perturbations of strictly monotone shear flows in $C^{N,\frac12-}$, where $N+1$ denotes the maximal number of vanishing even derivatives of the background shear at the stagnation point ($N=1$ for Couette). 

The above two examples hint at the monotonicity and local parity of the background shear flow have a distinguished role in the regularity threshold of the flexibility/rigidity dichotomy, since these additional structures are able to improve the $C^{1-}$ regularity obtained by our Theorem \ref{flexthm}. Nevertheless, it is not clear if one can find analogous improvements for the odd  power laws $u_*(x,y)=(y^n,0)$, which are not strictly monotone for $n>1$ and for which Theorem \ref{flexthm} gives the best regularity for all $n\geq 2$, up to our knowledge. Although we do not prove it here for $u_*=(y^n,0)$ with $n>2$, we suspect that, as in the Poiseuille case, there exists an $\ve:=\ve(u_*, M)$ such that all stationary solutions $u$ satisfying $\|u-u_*\|_{C^n(M)}<\ve$ are shear flows. For other recent works on structures, both dynamic and static, nearby steady states see \cite{baldi2024nearly, CastroLear23, CastroLear24b, CastroLear24, enciso2023quasi, hassainia2023invariant, franzoi2024space}.

We now give an outline for the rest of the paper and the main ideas involved in the proofs. In Section \ref{sec:propertystream} we state and show basic properties of streamlines of steady laminar flows. We define suitable fluid sub-domains that give rise to semilinear elliptic equations for the stream-function, with $C^1$ non-linearities. If these fluid sub-domains have free boundaries, then we show that $\nabla\psi=0$ on them, thus obtaining over-determined elliptic equations.

Section \ref{sec:rigidoverdetchannel} studies these over-determined problems for domains that are subsets of the periodic channel and shows that the corresponding solutions (and the domains) must be invariant by horizontal translations. This is achieved using a sliding method together with local symmetry results from the theory of Continuous Steiner Symmetrization. 

Section \ref{sec:rigidoverdetannulus} considers the annular case and reaches the same conclusions; solutions must be now rotation-invariant. The arguments for the annular case, while adapted to our assumptions, are inspired by the rich literature on rigidity of steady Euler solutions in planar doubly connected sets.

Section \ref{sec:rigidfreeboundary} deals with the proof of Corollary \ref{cor:rigidfreeboundary}. To achieve this, we investigate free boundaries with constant velocity. There we establish an intermediate result concerning rigidity of solutions to the free-boundary Euler equations in the periodic strip. Its proof may be of independent interest in the theory of overdetermined boundary value problems in periodic domains and involves a moving plane method followed by a carefully devised sliding argument. 

Next, Section \ref{zeroregion} studies Theorem \ref{thm:rigidity} when open regions of zero velocity are allowed to exist. We finish with Section \ref{sec:mainshearrigid} and Section \ref{flexsec}, where we show that certain classes of shear flows are isolated from non-shear configurations, and we construct the existence of inviscid dynamical structures in lower regularity neighborhoods.

\section{Key properties of laminar streamlines and its fluid sub-domains}\label{sec:propertystream}
In this section we study the streamlines of laminar flows in $M$ and we construct semilinear elliptic equations in suitably chosen fluid sub-domains of $M$. The assumption in Theorem \ref{thm:rigidity} and Corollary \ref{cor:rigidfreeboundary} that all streamlines of the steady state are closed non-contractible loops has the following consequence: one can foliate $M$ by these streamlines\footnote{Note that our assumptions rule out the existence of open sets of zero velocity.  Indeed, if this were the case then the velocity field would admit contractible streamlines arbitrarily confined within these regions.  A more general case where zero regions are allowed is relegated to \S \ref{zeroregion}. }. Since $u=\nabla^\perp\psi$, the velocity field is tangent to the streamlines and, as a result, $u$ is a steady Euler solution on any domain whose boundaries are given by the streamlines of the flow. 
\subsection{Regular, Regular-Singular and Singular streamlines}
In order to define fluid sub-domains that are useful to our purposes, for each value $c$ in the range of $\psi:M\rightarrow \mathbb{R}$ we classify the streamlines associated to $c$. In what follows, any streamline $\Gamma_c \subseteq \lbrace \psi = c \rbrace$ is understood to be a maximally connected component of the set $\lbrace \psi^{-1}(c) \rbrace$ and it is assumed to be a closed non-contractible loop. We say that a streamline $\Gamma_c$ is
\begin{itemize}
\item a \emph{regular streamline} of $\psi$ if for all $q\in \Gamma_c$ there holds $\nabla\psi(q) \neq 0$.
\item a \emph{regular-singular streamline} of $\psi$ if there exists $p,q\in \Gamma_c$ such that $\nabla\psi(q)\neq 0$ and $\nabla\psi(p) = 0$.
\item a \emph{singular streamline} of $\psi$ if for all $p\in \Gamma_c$ we have $\nabla\psi(p) = 0$.
\end{itemize}
We remark here that this characterization is at the level of the streamlines, so that there may exist $c$ such that $\lbrace \psi^{-1}(c) \rbrace$ contains regular, singular and regular-singular streamlines. The following result shows that regular streamlines are dense in $M$.
\begin{lemma}\label{lemma:streamlines}
Regular streamlines are isolated from singular ones. Additionally, all points on any singular or regular-singular streamline are not isolated from regular streamlines. 
\end{lemma}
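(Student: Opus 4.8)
The plan is to treat the two assertions separately: the first follows from a simple openness (tubular neighborhood) argument, while the second combines Sard's theorem with the standing hypothesis—recorded in the footnote—that the laminar assumption forbids open regions of zero velocity.

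For the first assertion, let $\Gamma_c$ be a regular streamline, so that $\nabla\psi\neq 0$ at every point of $\Gamma_c$. As a connected component of the closed set $\psi^{-1}(c)$ that is a non-contractible loop, $\Gamma_c$ is compact, and hence $m:=\min_{\Gamma_c}|\nabla\psi|>0$. Since $\psi\in C^2$, the gradient is continuous and the open set $U:=\{q\in M : |\nabla\psi(q)|>m/2\}$ is a neighborhood of $\Gamma_c$ containing no critical point of $\psi$. A singular streamline consists entirely of critical points, so it cannot intersect $U$; thus $U$ separates $\Gamma_c$ from every singular streamline, which is exactly the asserted isolation.

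For the second assertion I would first isolate the key dichotomy: a streamline $\Gamma_c$ fails to be regular (i.e.\ is singular or regular--singular) precisely when it contains a critical point, hence only when $c$ is a critical value of $\psi$. Writing $Z:=\{\nabla\psi=0\}$ and $\mathcal{C}:=\psi(Z)$ for the critical set and the critical values, this shows that the set of points lying on regular streamlines contains $M\setminus\psi^{-1}(\mathcal{C})$, because every component of $\psi^{-1}(c)$ with $c\notin\mathcal{C}$ avoids $Z$ and is therefore regular. It then suffices to prove that $M\setminus\psi^{-1}(\mathcal{C})$ is dense, equivalently that $\psi^{-1}(\mathcal{C})$ has empty interior: any point $p$ on a singular or regular--singular streamline would then be approached by points on regular streamlines, i.e.\ not isolated from them. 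To rule out an open set $W\subseteq\psi^{-1}(\mathcal{C})$, I would invoke Sard's theorem—valid since $\psi\in C^2$ on a two--dimensional domain—to conclude that $\mathcal{C}$, and hence $\psi(W)\subseteq\mathcal{C}$, has Lebesgue measure zero. On each connected component of $W$ a continuous function with null image must be constant, forcing $\nabla\psi\equiv 0$ on a nonempty open set and contradicting the no--zero--velocity--region hypothesis.

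The crux is the second assertion, and specifically the non-local nature of regularity of a streamline: it does not suffice that $\nabla\psi(q)\neq 0$, one needs the entire level component through $q$ to avoid $Z$. For this reason the easy density of the regular points $M\setminus Z$ does not by itself yield density of regular streamlines; the two essential ingredients are that non-regular streamlines occur only over the measure-zero set of critical values (Sard) and that $\psi$ cannot be locally constant (absence of open zero-velocity regions).
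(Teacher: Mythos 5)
Your proof is correct, and while your treatment of the first assertion is essentially the paper's argument in contrapositive form (a uniform lower bound on $|\nabla\psi|$ near a compact regular streamline versus a sequence of critical points accumulating on it), your proof of the second assertion takes a genuinely different route. The paper argues dynamically: if a point with $\nabla\psi\neq 0$ on a regular--singular streamline were isolated from regular streamlines, a small ball around it would consist entirely of non-stagnant points lying on regular--singular streamlines, and transporting that ball by the area-preserving flow $u=\nabla^\perp\psi$ forces its image to concentrate on the stagnation set, contradicting incompressibility; the case $\nabla\psi(p)=0$ is reduced to this one by locating a nearby point of nonzero gradient. You instead observe that non-regular streamlines can only occur over critical values of $\psi$, invoke Sard's theorem (for which the available $C^2$, in fact $C^3$, regularity of $\psi$ suffices in two dimensions) to make the set $\mathcal{C}$ of critical values Lebesgue-null, and note that a nonempty open subset of $\psi^{-1}(\mathcal{C})$ would have connected image inside a null set, hence would be a set where $\psi$ is constant --- precisely the open zero-velocity region that the laminar hypothesis excludes (the same fact the paper's proof relies on). Your argument is shorter and sidesteps the measure-theoretic care the flow-compression step really requires (one must justify that the evolved ball's area degenerates even though the stagnation set need not be small a priori), and it directly yields the stronger conclusion that points on regular streamlines are dense in $M$; the paper's argument, on the other hand, is self-contained (no appeal to Sard) and its compression mechanism is reused almost verbatim in the generalization of Section \ref{zeroregion}, so it is worth noting that your替代 would also have to be adapted there. (Minor point: you correctly identify that the crux is the non-local nature of streamline regularity, which is exactly why the trivial density of $M\setminus\{\nabla\psi=0\}$ does not suffice.)
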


\begin{proof}We begin by noting that $\psi\in C^3(M)$ because $u\in C^2(M)$. If a regular streamline were not isolated from singular streamlines, we would be able to find a sequence of points $p_n$, with $\psi(p_n) = c_n \rightarrow c$ and $\nabla\psi(p_n) = 0$ such that $p_n\rightarrow p$, for some $p\in \Gamma_c$. By continuity, $\nabla\psi(p) = 0$. But this contradicts $\Gamma_c$ being a regular streamline.

To prove the second statement of the lemma, assume that regular-singular streamlines are isolated from regular ones. Now, let $\Gamma_c$ denote a regular-singular streamline and let $q_c\in \Gamma_c$ such that $\nabla\psi(q_c) \neq 0$. By continuity, $\nabla\psi(q)\neq 0$, for all $q\in B_{\delta_1}(q_c)$, for some $\delta>0$. Choosing $\delta>0$ sufficiently small, we see that all $q\in B_{\delta_1}(q_c)$ belong to regular-singular streamlines. Indeed, they cannot belong to singular streamlines because they have non-zero gradient and they cannot lie on regular streamlines since otherwise those would approximate the regular singular streamline. Now, the open ball $B_{\delta}(q_{c})$ is of positive area $\pi\delta^2$ and the velocity field $u= \nabla^\perp\psi$ has no stagnation points in  $B_{\delta}(q_{c})$, it moves points along their streamlines. We flow the ball by the field $u$. However, since all $q\in B_{\delta}(q_{c})$ belong to a regular-singular streamline, eventually the flowed region compresses to a set of zero area, a contradiction. 

Let now $p_c\in\Gamma_c$ be such that $\nabla\psi(p_c) = 0$ and assume that all $p\in B_\delta(p_c)$ do not lie on a regular streamline, for some $\delta>0$. Now, $\nabla\psi\equiv 0$ in $B_\delta(p)$ would contradict $\psi$ having non-contractible streamlines and thus there must be some $q\in B_{\delta}(p_c)$ such that $\nabla\psi(q)\neq 0$. By continuity, there exists some $\delta_1>0$ such that $\nabla\psi(p)\neq 0$, for all $p\in B_{\delta_1}(q)\subset B_{\delta}(p_c)$. As before,  $B_{\delta_1}(q)$ has non-zero area and can be flowed by the field $u=\nabla^\perp\psi$. However, all $p\in B_{\delta_1}(q)$ belong to regular-singular streamlines, (they cannot belong to singular streamlines because they have non-trivial gradient) so that eventually the ball compresses to a set of zero area. Thus, all points on regular-singular streamlines are not isolated from regular streamlines.
\end{proof}
\subsection{Construction of suitable fluid sub-domains}
The proof of the above lemma shows that in $M$ we must have regular streamlines. Otherwise all would necessarily be singular and the velocity field would necessarily be 0. Hence, to prove Theorem \ref{thm:rigidity}, for any regular streamline  $\Gamma_{c_0}\subseteq \lbrace \psi = c_0 \rbrace$, we shall see that the flow on $\Gamma_{c_0}$ is parallel on the periodic channel and circular on the annulus. To do so, for each regular streamline we devise an open bounded domain containing that streamline and where the stream-function satisfies a semilinear elliptic equation. As a matter of fact, these open bounded domains are doubly connected and have either parallel/circular boundaries or free boundaries where the stream-function enjoys over-determined conditions there.

To construct these fluid sub-domains we argue for the annulus since, by identification, the construction extends to the periodic channel. Now, by the Jordan curve Theorem, the smooth non-contractible curve $\Gamma_{c_0}$ separates the annulus into an open interior region denoted by $\Omega_{\Gamma_{c_0}}$ and an exterior region. Since $\nabla\psi \neq 0$ in $\Gamma_{c_0}$, we have that $\partial_{\hat{n}}\psi\neq 0$, where $\hat{n}$ denotes the exterior unit normal vector of $\partial\Omega_{\Gamma_{c_0}}=\Gamma_{c_0}$, and we may as well assume that $\partial_{\hat{n}}\psi <0$ throughout $\Gamma_{c_0}$, 
Next, we define
\begin{equation}
c_- := \inf \lbrace \psi(\Gamma_c) : \Gamma_c \text{ is a streamline},\,  \psi|_{\Gamma_c}=c \text{ and } \Omega_{\Gamma_c}\setminus \overline{\Omega_{\Gamma_{c_0}}} \neq \emptyset  \text{ has no singular streamlines} \rbrace
\end{equation}
and likewise
\begin{equation}
c_+ := \sup \lbrace \psi(\Gamma_c) : \Gamma_c \text{ is a  streamline},\, \psi|_{\Gamma_c}=c \text{ and }  \Omega_{\Gamma_{c_0}}\setminus \overline{\Omega_{\Gamma_c}} \neq \emptyset  \text{ has no singular streamlines} \rbrace.
\end{equation}
If $\partial_n\psi > 0$ in $\Gamma_{c_0}$, the definitions of $c_-$ and $c_+$ are modified accordingly. 

\begin{proposition}\label{prop:fluiddomain}
The supremum $c_+$ and infimum $c_-$ are achieved on streamlines $\Gamma_{c_+}$ and $\Gamma_{c_-}$, respectively. The open set $D:=\Omega_{\Gamma_{c_-}} \setminus \overline{\Omega_{\Gamma_{c_+}}}$ does not contain any singular streamline and $c_- < \psi < c_+$ in $D$. Additionally, if $\Gamma_{c_-}\subset M$ is interior, then it is a singular streamline and the same holds for $\Gamma_{c_+}$.
\end{proposition}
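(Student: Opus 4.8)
The plan is to build everything on two structural consequences of the laminar hypothesis. First, every connected component of every level set $\{\psi = c\}$ is a non-contractible loop, i.e.\ a streamline. Second, $\psi$ has no interior local extrema: a strict interior maximum or minimum would be encircled by arbitrarily small \emph{contractible} level loops, contradicting laminarity. From the second fact I would immediately extract the workhorse observation that two distinct non-contractible streamlines cannot carry the same value $c$; if they did, they would bound an annular subregion on whose closure $\psi$ must attain an interior extremum (or be constant on an open set, which is likewise excluded). Hence each value $c$ in the range is carried by at most one non-contractible loop through a given point, and the level sets foliate a neighborhood of $\Gamma_{c_0}$ monotonically, with $\psi$ strictly decreasing in the outward direction.

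Next I would show $c_-$ is attained (the argument for $c_+$ being symmetric). Choosing $c_n \downarrow c_-$ with admissible streamlines $\Gamma_{c_n}$, monotonicity forces the interior regions $\Omega_{\Gamma_{c_n}} \supsetneq \overline{\Omega_{\Gamma_{c_0}}}$ to be nested and singular-free, so $\bigcup_n \Omega_{\Gamma_{c_n}}$ is open with outer topological boundary contained in $\{\psi = c_-\}$ by continuity. By compactness of the annulus the curves $\Gamma_{c_n}$ Hausdorff-subconverge to a compact connected non-contractible set $K \subseteq \{\psi = c_-\}$; since the components of $\{\psi = c_-\}$ are themselves non-contractible loops, $K$ lies inside a single such loop $\Gamma_{c_-}$, which is the desired extremal streamline. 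Then $\Omega_{\Gamma_{c_-}} \setminus \overline{\Omega_{\Gamma_{c_0}}} = \bigcup_n \big(\Omega_{\Gamma_{c_n}} \setminus \overline{\Omega_{\Gamma_{c_0}}}\big)$ is a union of singular-free open sets, hence singular-free. This already delivers that $D = \Omega_{\Gamma_{c_-}} \setminus \overline{\Omega_{\Gamma_{c_+}}}$ contains no singular streamline, since it decomposes as the two singular-free annular pieces joined along the regular streamline $\Gamma_{c_0}$; and the strict inequalities $c_- < \psi < c_+$ on $D$ follow from the monotone foliation together with the uniqueness of the loop carrying a given value, which prevents $\psi$ from reattaining the boundary values $c_\pm$ in the open region.

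The heart of the matter, and the step I expect to be most delicate, is the last assertion: an interior $\Gamma_{c_-}$ must be fully singular. I would argue by contradiction, assuming some $q \in \Gamma_{c_-}$ has $\nabla\psi(q) \neq 0$. The implicit function theorem foliates a neighborhood of $q$ by regular arcs, so for $c$ slightly below $c_-$ the set $\{\psi = c\}$ contains an arc lying just outside $\Gamma_{c_-}$; by laminarity this arc closes into a non-contractible loop $\Gamma_c$. Crucially, $\Gamma_c$ is regular or regular-singular but never fully singular: a fully singular loop at value $c$ would, by the uniqueness of non-contractible loops per level, have to coincide with the regular arc through $q$, which is absurd. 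Thus the thin region newly enclosed between $\Gamma_{c_0}$ and $\Gamma_c$ adds no singular streamline, making $c < c_-$ admissible and contradicting the infimum. The genuine obstacle is to certify that no fully singular streamline \emph{blocks} this extension near the stagnation (zero-velocity) arcs of $\Gamma_{c_-}$, where the foliation is most fragile; here I would lean on Lemma~\ref{lemma:streamlines}, using the isolation of regular from singular streamlines and the density of regular streamlines, together with the area-preserving-flow compression argument, to guarantee that the continued loop truly avoids singular level sets. The inward-facing version of the same reasoning yields the statement for $\Gamma_{c_+}$.
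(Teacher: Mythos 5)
Your overall strategy mirrors the paper's (extract the extremal streamline from a convergent sequence, exhaust $\Omega_{\Gamma_{c_-}}\setminus\overline{\Omega_{\Gamma_{c_0}}}$ by the admissible singular-free regions, then argue by contradiction at a putative regular point $q\in\Gamma_{c_-}$), and the attainment of $c_\pm$, the singular-freeness of $D$, and the bounds $c_-<\psi<c_+$ are argued essentially as in the paper, in places with more detail. However, your ``workhorse observation'' --- that two distinct non-contractible streamlines cannot carry the same value --- is false as a global statement: the laminar shear flow $\psi(y)=\sin(2\pi y)$ on the channel has many pairs of distinct streamlines at the same level. What is true is that two such streamlines at a common value force a \emph{singular} streamline in the annulus between them (the interior extremum of $\psi$ on that closed annulus is attained along a whole non-contractible loop on which $\nabla\psi=0$); your dichotomy ``interior point-extremum or constant on an open set'' omits exactly this case. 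The observation is therefore only valid inside regions already known to be singular-free. That is harmless where you use it to get $c_-<\psi<c_+$ in $D$, but it is circular in the last step, where you deduce that a fully singular loop at value $c<c_-$ ``would have to coincide with the regular arc through $q$'': the singular-freeness of the newly adjoined thin annulus is precisely what is at stake there.

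That last step is, as you yourself flag, left genuinely open, and the tools you propose do not close it. The first assertion of Lemma \ref{lemma:streamlines} isolates \emph{regular} streamlines from singular ones, but $\Gamma_{c_-}$ is only regular-singular in this scenario, so singular streamlines are not a priori prevented from accumulating on it; the density of regular streamlines and the incompressibility argument say nothing about the portion of the thin annulus far from $q$. The missing idea is topological and short: split on the sign of $\partial_{\hat{n}}\psi(q)$. If the gradient points so that $\psi<c_-$ just \emph{inside} $\Gamma_{c_-}$, you contradict $c_-<\psi$ in $D$ immediately (a case your write-up does not treat). Otherwise take $\Gamma_c$ through a point $q+s\hat{n}(q)$ on a short normal segment along which $\nabla\psi\neq 0$; any singular streamline in $\Omega_{\Gamma_c}\setminus\overline{\Omega_{\Gamma_{c_-}}}$ would be a non-contractible loop separating $\Gamma_{c_-}$ from $\Gamma_c$ and hence would have to cross that segment, which is impossible. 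With that argument supplied (and the uniqueness claim restricted to singular-free regions), your proof matches the paper's.
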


\begin{proof}
We shall argue for $c_-$, since the proof for the statements concerning $c_+$ is the same. Firstly, by assumption we have that $\Gamma_{c_0}$ is a regular streamline such that $\partial_{\hat{n}}\psi(p)< 0$, for all $p\in\Gamma_{c_0}$. For any such $p$, by continuity, we have that $\nabla\psi(q)\neq 0$, for all $q\in B_{\delta}(p)$, for some $\delta>0$. Hence, for all $s\in (0,\delta)$, we have that $\psi(p+s\hat{n}(p)) < \psi(p)$ and $\nabla\psi(p + s\hat{n}(p))\neq 0$. As such, for all $s\in (0,\delta)$, the streamline passing through $p+s\hat{n}(p)$ has value strictly smaller than $c_0$ and it is not singular, since $\nabla\psi(p+s\hat{n}(p))\neq 0$. This shows that we are taking an infimum over a non-empty set.

Now, let $\Gamma_{c_n}$ be a sequence of streamlines such that $\psi|_{\Gamma_{c_n}} = c_n>c$ and $\Omega_{\Gamma_{c_n}}\setminus \overline{\Omega_{\Gamma_{c_0}}}$ has no singular streamlines, with $c_n\rightarrow c$. We choose $p_n\in\Gamma_{c_n}$ and observe that the sequence of $p_n$ live in the compact space $\overline{M}\setminus \Omega_{\Gamma_{c_0}}$, so that, up to a subsequence, they converge to some $p\in\overline{M}$ such that, by continuity, $\psi(p) = c_-$. We then define $\Gamma_{c_-}$ as the closed streamline of $\psi^{-1}(c_-)$ that passes through $p$. By definition, it is such that $\Omega_{\Gamma_{c_-}}\setminus \overline{\Omega_{\Gamma_{c_0}}}$ has no singular streamlines. If $M$ is the periodic channel and $p\in\mathbb{T}\times \lbrace 1 \rbrace$, then $\Gamma_{c_-} = \mathbb{T}\times \lbrace 1 \rbrace$, while if $p$ is an interior point, then so is $\Gamma_{c_-}$. If $M$ is the annulus, the same conclusion holds, $\Gamma_{c_-}$ is the outer circular boundary.

The obvious modifications to the above argument also show the existence of a streamline $\Gamma_{c_+}$ where the supremum $c_+$ is achieved and $\Omega_{\Gamma_{c_0}}\setminus \overline{\Omega_{\Gamma_{c_+}}}$ has no singular streamlines. Since $\Gamma_{c_0}$ is itself a regular streamline, we conclude that $D:=\Omega_{\Gamma_{c_-}}\setminus \overline{\Omega_{\Gamma_{c_+}}}$ has no singular streamlines. Therefore, we must have $c_- < \psi < c_+$ in $D$. Otherwise, we would have some $q\in D$ either with $\psi(q)\leq c_-$ or $\psi(q) \geq c_+$ and that would imply first the existence of streamlines in $D$ with value $c_-$ or $c_+$ and then the existence of extrema in $D$. These extremal points would lie on a streamline, making all points in the streamline to be extremal, and thus rendering the streamline to be singular.

We now show that if $\Gamma_{c_-}$ lies in $M$ then it must be singular. Assume otherwise, namely that there exists some $q\in \Gamma_{c_-}$ such that $\nabla\psi(q)\neq 0$, that is, $\partial_{\hat{n}} \psi(q)\neq 0$. Now, if $\partial_{\hat{n}}\psi(q)>0$, there exists streamlines $\Gamma_c$ in $D$ with $\psi(\Gamma_c) < c_-$, thus contradicting $c_- < \psi < c_+$ in $D$. On the other hand, if $\partial_{\hat{n}}\psi(q) < 0$, by continuity we can likewise find non-singular streamlines $\Gamma_c$ such that $\psi|_{\Gamma_c} = c < c_-$ and $\Omega_{\Gamma_{c}}\setminus \overline{\Omega_{\Gamma_{c_0}}}$ has no singular streamlines, a contradiction with the fact that $c_-$ is the infimum. Therefore, it must be $\nabla\psi\equiv 0$ on $\Gamma_{c_-}$, it is a singular streamline. The same holds for $\Gamma_{c_+}$. The proof is concluded.
\end{proof}
\begin{figure}[h!]
\centering
\includegraphics[width=0.8\textwidth]{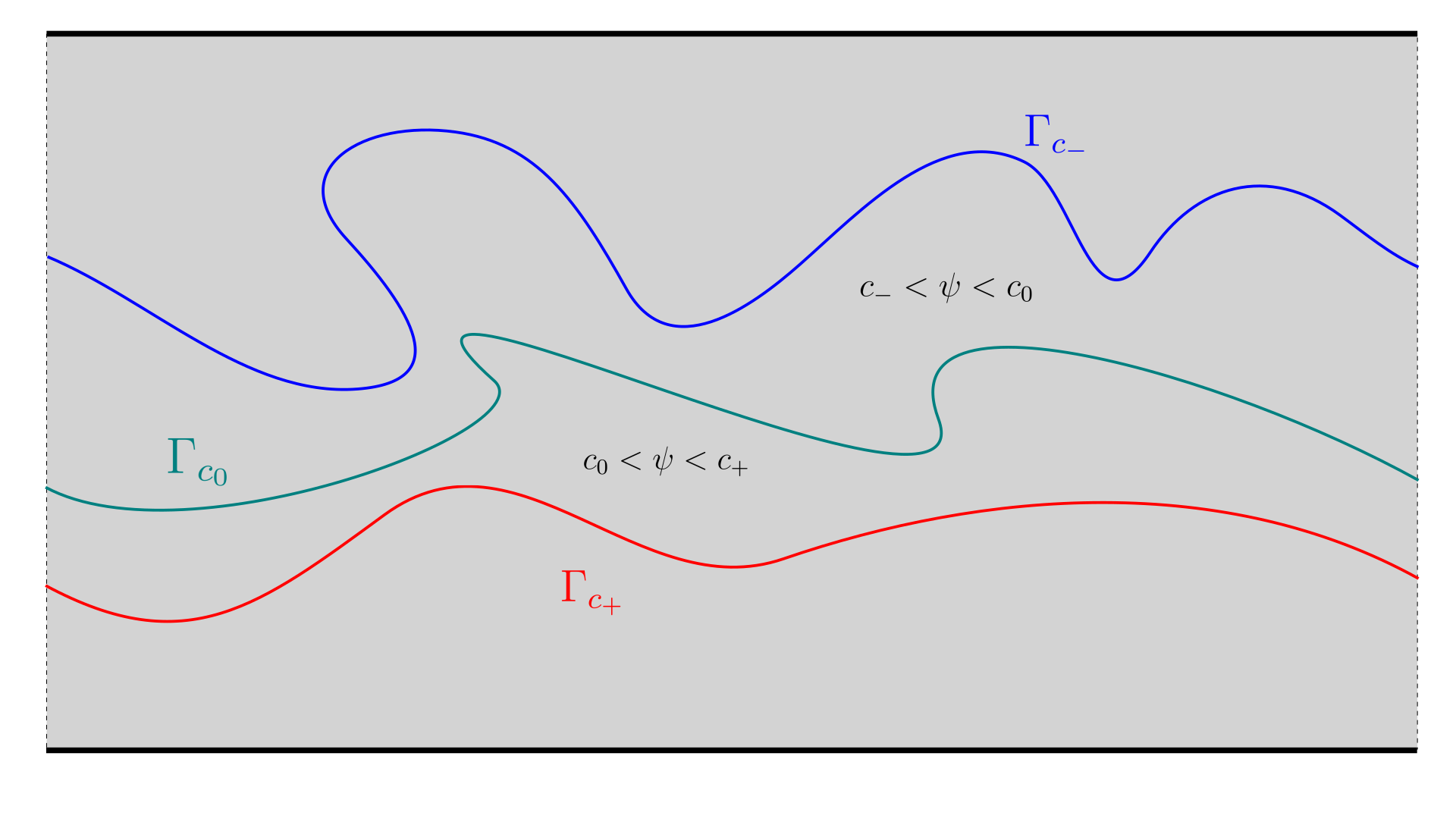}
\caption{A typical fluid sub-domain. The regular streamline $\textcolor{teal}{\Gamma_{c_0}}$ lives in the region bounded by the singular streamlines $\textcolor{blue}{\Gamma_{c_-}}$ and $\textcolor{red}{\Gamma_{c_+}}$. By construction, $c_- < \psi < c_+$ in $D$. }
\end{figure}
\subsection{A semilinear elliptic equation for the streamfunction}
Once we have found a fluid sub-domain $D$ containing only  regular and regular-singular streamlines, the next step is to show that $\psi$ satisfies a semi-linear elliptic equation there. Without loss of generality, we may assume that $D=\Omega_{\Gamma_0}\setminus \overline{\Omega_{\Gamma_1}}$, where $\Gamma_0$ and $\Gamma_1$ are streamlines of $\psi$.
\begin{lemma}\label{lemma:mainellipticeq}
Let $D=\Omega_{\Gamma_{0}}\setminus\overline{\Omega_{\Gamma_1}}$. Let $\psi$ be a steady Euler solution in $D$ such that all streamlines are closes non-contractible curves and they are not singular. Then, there exists $f\in C^1(0,1)\cap C[0,1]$ such that 
\begin{equation}\label{eq:ellipsi}
\begin{cases}
\Delta\psi = f(\psi), \quad 0<\psi<1 & \text{ in }D, \\
\psi = 0, & \text{ in } \Gamma_0, \\
\psi = 1, & \text{ in } \Gamma_1,
\end{cases}
\end{equation}
\end{lemma}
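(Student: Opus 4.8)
The plan is to derive \eqref{eq:ellipsi} from the single algebraic consequence of stationarity, namely that the vorticity is a function of the streamfunction, and to build that function by hand using the density of regular streamlines. Since $u\in C^2(D)$ we have $\psi\in C^3(D)$ and hence $\omega:=\Delta\psi\in C^1(\overline{D})$; the steady Euler equation $\{\psi,\omega\}=0$ reads $\nabla^\perp\psi\cdot\nabla\omega=0$, so $\nabla\omega$ is parallel to $\nabla\psi$ wherever $\nabla\psi\neq0$. In particular $\omega$ is locally constant along any level curve of $\psi$ through a point where $\nabla\psi\neq0$, and therefore $\omega$ is genuinely constant on every \emph{regular} streamline. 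After the normalization making $\psi=0$ on $\Gamma_0$, $\psi=1$ on $\Gamma_1$ and $0<\psi<1$ in $D$ (Proposition \ref{prop:fluiddomain}), the entire content of the lemma is the claim that there is a single $f$ with $\omega=f(\psi)$ throughout $D$; the regularity $f\in C^1(0,1)\cap C[0,1]$ will then come from the local submersion structure of $\psi$.

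First I would show that for every $c\in(0,1)$ the level set $\{\psi=c\}\cap D$ is a single non-contractible streamline $\Gamma_c$. Such a level set is compact and contained in the interior of $D$, since its value differs from the boundary values $0,1$. If it had two components, they would be disjoint non-contractible loops in the doubly connected $D$, hence nested, cobounding an annular region $R\Subset D$ on whose boundary $\psi\equiv c$. If $\psi$ attained an interior maximum $M>c$ on $\overline{R}$, the whole level set $\{\psi=M\}$ through such a maximizer would lie in the interior of $R$ (it cannot meet $\partial R$, where $\psi=c$) and would consist entirely of critical points, i.e. it would be a singular streamline; likewise for an interior minimum. Since $D$ contains no singular streamlines this forces $\psi\equiv c$, hence $\nabla\psi\equiv0$, on $R$, again producing singular streamlines. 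Thus level sets are connected, which is what makes $f(c):=\omega|_{\Gamma_c}$ an unambiguous candidate.

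With this in hand I would construct $f$ and prove $\omega=f(\psi)$ in two stages. On the dense (by Lemma \ref{lemma:streamlines}) set of values $c$ for which $\Gamma_c$ is regular, $\omega$ is constant on $\Gamma_c$ and $f(c)$ is defined. For a general $c\in(0,1)$ the streamline $\Gamma_c$ is not singular, so it carries at least one regular point $q$; near $q$ the map $\psi$ is a $C^3$ submersion, its level curves are the local pieces of the $\Gamma_{c'}$, and pushing $\omega$ forward along a gradient transversal expresses $\omega=g(\psi)$ on a neighborhood with $g=\omega\circ(\psi|_{\mathrm{transversal}})^{-1}\in C^1$. This $g$ agrees with $f$ on the dense set of regular values near $c$, so it provides the $C^1$ extension of $f$ past $c$ and shows $f\in C^1(0,1)$. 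Finally, to reach the stagnation points themselves, I would take any $p\in\Gamma_c$ and, using that points of regular-singular streamlines are not isolated from regular streamlines (Lemma \ref{lemma:streamlines}), choose regular points $p_n\to p$ with $\psi(p_n)=c_n\to c$; then $\omega(p_n)=f(c_n)\to f(c)$ by continuity of $f$, while $\omega(p_n)\to\omega(p)$, giving $\omega(p)=f(c)$ and hence $\omega=f(\psi)$ on all of $D$. Continuity of $f$ up to $\{0,1\}$ follows from the same transversal argument at a regular boundary point, or from $\omega=f(\psi)$ together with the uniform continuity of $\omega$ on $\overline{D}$.

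The step I expect to be the main obstacle is precisely the last one: promoting the relation $\omega=f(\psi)$ across the \emph{regular-singular} streamlines, where the Euler relation $\nabla^\perp\psi\cdot\nabla\omega=0$ degenerates at the stagnation points and a naive one-dimensional argument along $\Gamma_c$ could fail (a continuous function that is locally constant off a nowhere-dense set need not be constant). The device that circumvents this is to avoid analyzing $\Gamma_c$ intrinsically and instead exploit the two-dimensional density of regular streamlines from Lemma \ref{lemma:streamlines} together with the continuity of $\omega$ and of the already-constructed $f$, which pins down $\omega(p)$ at every stagnation point without ever differentiating along the singular set.
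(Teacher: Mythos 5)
Your argument is essentially the paper's, built on the same two pillars: the density of regular streamlines (Lemma \ref{lemma:streamlines}) and the normal/gradient transversal through a regular point, which turns $\omega$ into a $C^1$ function of $\psi$ locally. Where you differ is in the order of operations for the regular--singular streamlines: the paper first proves directly that $\omega$ is constant on each such streamline by a two-point comparison argument (placing balls around $p_1,p_2$ with $\omega(p_1)\neq\omega(p_2)$, pulling in nearby regular streamlines, and splitting into the cases $c_1=c_2$ and $c_1\neq c_2$), and only afterwards proves continuity and $C^1$ regularity of $f$; you instead first establish connectedness of level sets and the local $C^1$ expression for $f$ near every value (using that every non-singular streamline carries a regular point), and then pin down $\omega$ at stagnation points by $\omega(p)=\lim\omega(p_n)=\lim f(c_n)=f(c)$. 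Your route is cleaner on the interior and I believe it is correct there; it buys you a single limiting argument in place of the paper's case analysis.

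The one step that is genuinely under-justified is the continuity of $f$ at the endpoints $c=0,1$ when the boundary streamline $\Gamma_i$ is \emph{singular} (which is exactly the case produced by Proposition \ref{prop:fluiddomain} and needed for the overdetermined problems). Your first option (transversal at a regular boundary point) is unavailable there, and your second option does not close: knowing $\omega=f(\psi)$ in $D$ with $f\in C(0,1)$ bounded, plus uniform continuity of $\omega$ on $\overline{D}$, does not by itself give existence of $\lim_{c\to 0^+}f(c)$ --- if $\omega$ were non-constant on $\Gamma_0$, $f$ could oscillate as $c\to 0^+$ with no contradiction to continuity of $\omega$. What is missing is precisely the statement that $\omega$ is constant on the singular boundary streamline. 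This can be repaired either by the paper's comparison argument (which runs verbatim at the boundary, using that points of singular streamlines are not isolated from regular ones), or by noting that the non-contractible loops $\Gamma_{c_n}$ with $c_n\to 0$ must Hausdorff-accumulate on \emph{all} of $\Gamma_0$ (their limit set is a connected, non-contractible compact subset of the Jordan curve $\Gamma_0$, hence equals $\Gamma_0$), which forces $\omega\equiv\lim f(c_n)$ on $\Gamma_0$ for every convergent subsequence and hence the existence of the limit. Either patch is short, but as written the endpoint claim $f\in C[0,1]$ is not proved in the case the paper actually needs.
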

\begin{proof}
We begin by showing that $\psi$ must attain different values in each connected component of $\partial D$, namely $\Gamma_0$ and $\Gamma_1$. Indeed, if $\psi$ were to attain the same value, then the global maximum or minimum of $\psi$ in $\overline{D}$ is attained in $D$. If both maxima and minima are attained in the boundary, then $\psi$ would be constant in $D$ and thus contain singular streamlines in $D$. Assume by simplicity that the maximum is attained in $D$. Now, by hypothesis this maximal value $c$ must be achieved along a closed non-contractible streamline $\Gamma_c$. Since it is the maximum in $D$, it must have $\nabla\psi = 0$ at all points where it is achieved, hence $\nabla\psi|_{\Gamma_c}\equiv 0$. Thus, $\Gamma_c$ would be a singular streamline in $D$, a contradiction.

Hence, up to addition and multiplication by constants (which do not alter the streamline topology), we assume that $\psi=0$ in $\Gamma_0$ and $\psi=1$ in $\Gamma_1$. Let $c\in [0,1]$, which is the range of $\psi$ in $\overline{D}$ and let $p\in \overline{D}$ such that $\psi(p) = c$. We define $f(c) = \Delta\psi(p)$. We shall see that $f$ is well-defined, it does not depend on the choice of $p$. To do so, consider $c\in [0,1]$ and $\Gamma_c$ the unique associated streamline. Its uniqueness stems from the fact that  two or more distinct streamlines attaining the same value imply the existence of global extrema in the region bounded by these streamlines, and there are no singular streamlines in $D$. From the Euler equations, we see that $\nabla\Delta\psi $ is parallel to $\nabla\psi$ and thus, if $\Gamma_c$ is a regular streamline, then $\Delta\psi$ is automatically constant on $\Gamma_c$ and $f(c)$ is well defined. 

If $\Gamma_c$ is a regular-singular streamline, assume there exists $p_1,p_2\in \Gamma_c$ such that $\Delta\psi(p_1) = \omega(p_1) < \omega(p_2) = \Delta\psi(p_2)$, and let $\epsilon := \frac{\omega(p_2) - \omega(p_1)}{4}$. By continuity, there exists $\delta>0$ such that $|\omega(q) - \omega(p_i)| < \epsilon$, for all $q\in B_{\delta}(p_i)$, for $i=1,2$. Since points on regular-singular stream lines are not isolated from points on regular streamlines, there exists $q_i\in B_{\delta}(p_i)$ such that $\psi(q_i) = c_i<c$ and $q_i\in \Gamma_{c_i}$ is a regular streamline. 

If $c_1 = c_2$, then $\Gamma_{c_1}=\Gamma_{c_2}$, since otherwise we would have local extrema and thus a singular streamline within the fluid domain. Hence,  $\omega(q_1) = f(c_1) = f(c_2) = \omega(q_2)$, it is constant on the regular streamline $\Gamma_{c_1}=\Gamma_{c_2}$. One one hand, since $q_1\in B_{\delta}(p_1)$, $\omega(q_1)$ is at most $\frac{\omega(p_2) - \omega(p_1)}{4}$ away from $\omega(p_1)$, while on the other hand, since $q_2\in B_\delta(p_2)$, we have that $\omega(q_2)=\omega(q_1)$ is also $\frac{\omega(p_2) - \omega(p_1)}{4}$ away from $\omega(p_2)$. This is clearly  impossible. If, say, $c >c_1 > c_2$, then by the intermediate value theorem there exists some $q_2^* \in B_{\delta}(p_2)$ such that $\psi(q_2^*) = \psi(q_1) = c_1$, namely there is some point of $\Gamma_{c_1}$ in $B_{\delta}(p_2)$. Since $\omega = \omega(q_1)$ on $\Gamma_c$, we have that $\omega(q_2^*)=\omega(q_1)$ and $q_2^*\in B_\delta(p_2)$, a contradiction with the continuity of $\omega$. Therefore, $\omega$ is constant in $\Gamma_c$. The case $c=0$ or $c=1$ is treated similarly; $f(i)$ is well defined if $\Gamma_i$ is a regular or regular-singular streamline singular streamline. If it is a singular streamline, since any point on a singular streamline is not isolated from points lying on regular streamlines, see Lemma \ref{lemma:streamlines}, by continuity of $\omega$ and following the argument for regular-singular streamlines we conclude that $f(i)$ is well-defined as well.

The next step in the proof is to show that $f$ is continuous. To do so, let $c_n\in [0,1]$, with $c_n\rightarrow c$, for some $c\in[0,1]$. For each $c_n\in [0,1]$ we denote $\Gamma_{c_n}$ the unique streamline in $D$ where $\psi|_{\Gamma_{c_n}} = c_n$ and let $p_n\in \Gamma_{c_n}$. Up to a subsequence, we have that $p_n\rightarrow p$, for some $p\in \overline{D}$. By continuity of $\psi$, there holds $\psi(p) = c$. Moreover, by continuity of $\omega = \Delta\psi$, we have that
\begin{equation}
f(c_n) = \Delta\psi(p_n) \rightarrow \Delta\psi(p) = f(c),
\end{equation}
thus concluding that $f:[0,1]\rightarrow \R$ is continuous.

The last step in the proof is to show that actually $f$ is of class $C^1(0,1)$. To achieve this, for all $c\in(0,1)$ we shall obtain a $C^1$ expression for $f$ in $(c-\delta_1,c+\delta_2)$, for some $\delta_1,\delta_2>0$ small enough. Let then $c\in(0,1)$, $\Gamma_c$ its unique associated streamline in $D$ and $q\in \Gamma_c\subset D$ such that $\nabla\psi(q)\neq 0$. This is possible because all streamlines in $D$ are not singular. We next define the normal flow $\sigma:(-\epsilon, \epsilon) \rightarrow D$ by
\begin{equation}
\begin{cases}
\sigma'(t) = \nabla\psi(\sigma(t)), & t\in (-\epsilon, \epsilon), \\
\sigma(0) = q,
\end{cases}
\end{equation}
for some $\epsilon>0$. Define also $g=\psi\circ\sigma:(-\epsilon,\epsilon)\rightarrow (0,1)$ and observe that $g'(0) = |\nabla\psi(q)|^2 >0$. Hence, for some (maybe smaller) $\epsilon>0$ and for suitable $\delta_1>0$ and $\delta_2>0$, we have that $g:(-\epsilon, \epsilon) \rightarrow (c-\delta_1, c+\delta_2)$ is a $C^1$ diffeomorphism. Moreover, we can write
\begin{equation}
f(c) = \Delta\psi(\sigma(g^{-1}(c))),
\end{equation}
for all $c\in (c-\delta_1, c+\delta_2)$. Since $\psi\in C^3$, we see that $f$ in $(c-\delta_1, c+\delta_2)$ is given by a $C^1$ function. This finishes the proof of the Lemma.
\end{proof}

\begin{remark}
As one can see from the proof, the existence of points of vanishing gradient in $D$ does not prevent the existence of a  function $f$ mapping the stream-values to their vorticities in a smooth manner. Indeed, as long as any streamline contains points with non-trivial gradient, one can define a locally smooth normal flow map there and show the smoothness of $f$.
\end{remark}
\subsection{Over-determined boundary value problems}
According to Proposition \ref{prop:fluiddomain} there are only three types of sub-domains $D$ in $M$ that we shall consider. All of them are diffeomorphic to the annulus and on each of them there are no singular streamlines, so that $\psi$ must attain different values on each connected component of the boundary. Since we are interested in the topology of the streamlines, we shall now add and multiply by non-zero constants the stream-function on each fluid sub-domain. These operations do not change the streamline topology and simplify the analysis. Hence, we assume $\psi=0$ on the outermost boundary and $\psi=1$ in the innermost boundary. We denote $\partial M_{\mathrm{top}}$ and $\partial M_{\mathrm{bot}}$ the top and bottom boundary of $M$. In the case of the periodic channel, these correspond to the upper and lower horizontal boundaries, while in the case of the annulus these correspond to the outer and inner circular boundaries. The three types of domain we have are the following.
\begin{enumerate}
\item The set $D=M$. This occurs when $\psi$ has no singular streamlines in $M$. Hence, we have from Lemma \ref{lemma:mainellipticeq} that $\psi$ satisfies
\begin{equation}\label{eq:elliM}
\begin{cases}
\Delta\psi = f(\psi), \quad  0<\psi<1, & \text{in }M, \\
\psi = 0, & \text{on }\partial M_{\mathrm{top}}, \\
\psi = 1, & \text{on }\partial M_{\mathrm{bot}},
\end{cases}
\end{equation}
for some $f\in C[0,1]\cap C^1(0,1)$.
\item The set $D = \mathbb{T}\times(0,1) \setminus \overline{\Omega_{\Gamma_1}}$, for some singular streamline $\Gamma_1$. From Lemma \ref{lemma:mainellipticeq} we see that $\psi$ satisfies
\begin{equation}\label{eq:ellionefree}
\begin{cases}
\Delta\psi = f(\psi), \quad  0<\psi<1, & \text{in }M \setminus \overline{\Omega_{\Gamma_1}}, \\
\psi(x,1) = 0, & \text{on }\partial M_{\mathrm{top}}, \\
\psi = 1, \quad \nabla\psi = 0, & \text{on }\Gamma_1,
\end{cases}
\end{equation}
for some $f\in C[0,1]\cap C^1(0,1)$ such that. Here we mention that the setting where $D=\Omega_{\Gamma}\setminus \overline{\Omega_{\partial M_{\mathrm{bot}}}}$ can be transformed to $D = M \setminus \overline{\Omega_{\Gamma}}$ and \eqref{eq:ellionefree} by applying suitable transformations for which the Laplacian is invariant and that do not change the streamline topology.
\item The set $D = \Omega_{\Gamma_0} \setminus \overline{\Omega_{\Gamma_1}}$, for some singular streamlines $\Gamma_0$ and $\Gamma_1$. From Lemma \ref{lemma:mainellipticeq} we see that $\psi$ satisfies
\begin{equation}\label{eq:ellitwofree}
\begin{cases}
\Delta\psi = f(\psi), \quad  0<\psi<1, & \text{ in }\Omega_{\Gamma_0} \setminus \overline{\Omega_{\Gamma_1}}, \\
\psi = 0, \quad \nabla\psi = 0, & \text{ on }\Gamma_0, \\
\psi = 1, \quad \nabla\psi = 0, & \text{ on }\Gamma_1,
\end{cases}
\end{equation}
for some $f\in C[0,1]\cap C^1(0,1)$.
\end{enumerate}
To study symmetry and invariant properties of the solution $\psi$ to any of the above problems, we shall determine the regularity of $\partial D$. While we know that regular streamlines are $C^2$ curves (we can parametrize them by curves whose tangent vector is proportional to $\nabla^\perp\psi\neq 0$), and regular-singular streamlines are also $C^2$ locally around its regular points, in general we do not know how regular the singular streamlines are, since $\nabla\psi\equiv 0$ there. Nevertheless, in certain cases we are able to show they are $C^2$ regular curves. This is the purpose of the following result.

\begin{lemma}\label{lemma:streamlineregularity}
Let $D=\Omega_{\Gamma_{0}}\setminus\overline{\Omega_{\Gamma_1}}$. Let $f\in C^1(0,1)\cap C[0,1]$ and $\psi\in C^3(\overline{D})$ be a solution to
\begin{equation}\label{eq:ellipsibis}
\begin{cases}
\Delta\psi = f(\psi), \quad 0<\psi<1 & \text{ in }D, \\
\psi = 0, & \text{ on } \Gamma_0, \\
\psi = 1, & \text{ on } \Gamma_1,
\end{cases}
\end{equation}
If $\nabla\psi = 0 $ on $\Gamma_i$ and $f(i)\neq 0$, then $\Gamma_i$ is a $C^2$ curve.
\end{lemma}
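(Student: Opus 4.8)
The plan is to localize near an arbitrary point $p_0\in\Gamma_i$ and identify $\Gamma_i$, in a neighborhood of $p_0$, with the zero set of a single directional derivative of $\psi$, which the implicit function theorem will render a $C^2$ curve. By replacing $\psi$ with $1-\psi$ (which turns the problem into the same one with $f$ replaced by $s\mapsto -f(1-s)$, preserving $f(i)\neq 0$), I may assume $i=0$, so that $\psi=0$ and $\nabla\psi=0$ on $\Gamma_0$ while $\Delta\psi=f(0)\neq 0$ there by continuity of $\Delta\psi=f(\psi)$ up to the boundary. Since $\psi\in C^3(\overline D)$, I first extend it to a $C^3$ function on a full neighborhood of $p_0$, so that $\nabla\psi$ and $\partial_n\psi$ below are genuinely $C^2$ on a two-sided neighborhood.

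The first key step is to show that the Hessian $H:=D^2\psi(p_0)$ has rank exactly one. Its trace is $\Delta\psi(p_0)=f(0)\neq 0$, so it suffices to prove $\det H=0$. Suppose not; then $D(\nabla\psi)(p_0)=H$ is invertible, and the inverse function theorem makes $\nabla\psi$ a local diffeomorphism near $p_0$, forcing $p_0$ to be an isolated zero of $\nabla\psi$. This contradicts $\Gamma_0\subseteq\{\nabla\psi=0\}$ together with the fact that $p_0$, lying on the embedded loop $\Gamma_0$, is not isolated in $\Gamma_0$. Hence $\det H=0$, and $H$ has eigenvalues $0$ and $f(0)$; let $n$ be a unit eigenvector for the eigenvalue $f(0)$, so that $\partial_{nn}\psi(p_0)=n^{\top}Hn=f(0)$.

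The second step uses $n$ to produce the curve. The function $\partial_n\psi$ is $C^2$, vanishes at $p_0$, and satisfies $\partial_n(\partial_n\psi)(p_0)=\partial_{nn}\psi(p_0)=f(0)\neq 0$; by the implicit function theorem its zero set $C:=\{\partial_n\psi=0\}$ is, near $p_0$, a $C^2$ curve. Moreover $\{\nabla\psi=0\}\subseteq\{\partial_n\psi=0\}=C$ locally, so in particular $\Gamma_0\subseteq C$ near $p_0$. It then remains to upgrade this inclusion to a local identification: since $\Gamma_0$ is an embedded Jordan curve it is a topological $1$-manifold, and a small enough neighborhood $U$ meets it in a single connected arc containing $p_0$ as a non-isolated point. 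As a continuous injection between $1$-manifolds, the inclusion $\Gamma_0\cap U\hookrightarrow C$ is open by invariance of domain, so $\Gamma_0\cap U$ is open in $C$; as $\Gamma_0$ is closed, $\Gamma_0\cap U$ is also relatively closed in the connected arc $C\cap U$, and being nonempty it must equal $C\cap U$. Thus $\Gamma_0$ coincides with the $C^2$ curve $C$ near $p_0$, and since $p_0\in\Gamma_i$ was arbitrary, $\Gamma_i$ is a $C^2$ curve.

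I expect the delicate points to be exactly the two inclusions that must be promoted to equalities: establishing $\det H=0$ (so that the critical set of $\psi$ is one-dimensional rather than a single point), and ruling out that $\Gamma_0$ is only a proper, possibly irregular, subset of the regular curve $C$. The first is handled by the inverse function theorem and the non-isolation forced by $\Gamma_0$ being a loop; the second by the open-plus-closed/invariance-of-domain argument, which crucially uses that $\Gamma_0$ is an embedded curve. A minor technical point worth checking is the $C^3$ extension of $\psi$ across $\Gamma_0$, whose intrinsic regularity is a priori unknown; this is precisely where the hypothesis $\psi\in C^3(\overline D)$ enters, and it is what guarantees $C$ is genuinely $C^2$. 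Notably only $f(i)\neq 0$, and not its sign, is used, matching the hypothesis exactly.
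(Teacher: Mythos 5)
Your proof is correct and follows essentially the same route as the paper's: both use $\Delta\psi=f(i)\neq 0$ on $\Gamma_i$ to find a direction in which a second derivative of $\psi$ is nonzero, apply the implicit function theorem to the corresponding first derivative of $\psi$, and use $\nabla\psi\equiv 0$ on $\Gamma_i$ to conclude that $\Gamma_i$ lies on the resulting $C^2$ curve (the paper picks a coordinate direction with $\partial_x^2\psi\neq 0$ or $\partial_y^2\psi\neq 0$ rather than a Hessian eigenvector, and your rank-one computation is true but not needed for the IFT step). Your final invariance-of-domain argument upgrading the inclusion $\Gamma_0\subseteq C$ to a local equality is a welcome extra precision that the paper leaves implicit, and the $C^3$ two-sided extension you flag is harmless in context since $\psi$ is in fact $C^3$ on all of $M\supset\overline{D}$ in the paper's application.
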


\begin{proof}
If $\nabla\psi = 0$ on $\Gamma_i$, we cannot directly parametrize $\Gamma_i$ as $\lbrace \psi = i \rbrace$ using the Implicit Function Theorem on the equation $\psi(x,y)=i$. However, since $f\in C([0,1])$ and $\psi\in C^3(\overline{D})$, there holds $\Delta\psi = f(i)\neq 0$ on $\Gamma_i$. That is, for the usual Cartesian coordinates, $\partial_x^2\psi$ and $\partial_y^2\psi$ cannot vanish simultaneously on $\Gamma_i$. Hence, we can split $\Gamma_i = U_x \cup U_y$, where $U_x = \lbrace p \in \Gamma_i: \partial_x^2 \psi (p) \neq 0 \rbrace$ and $U_y$ is defined analogously. They are both relatively open sets of $\Gamma_i$, at least one of them is non-empty and on all their points we can define a local $C^2$ curve parametrizing $\Gamma_i$. Indeed, for any $p\in U_y$, say, we have $\nabla\psi(p) = 0$, in particular $\partial_y\psi(p)= 0$, while $\nabla\partial_y\psi(p)\neq 0$. Hence, by the Implicit Function Theorem, the set $\lbrace \partial_y\psi = 0 \rbrace$ is locally given by a unique $C^2$ curve $\gamma_p(t)$ such that $\gamma_p(0) = p$ and $\partial_y\psi(\gamma_p(t))=0$, for all $t$ sufficiently small. Now, since $\nabla\psi=0$ on $\Gamma_i$, by the uniqueness of the Implicit Function Theorem, we have that all points $q\in\Gamma_i$ sufficiently close to $p$ must lie in the $C^2$ curve $\gamma_p(t)$. Thus, $\Gamma_i$ can be locally parametrized by a $C^2$ curve.
\end{proof}

The above Lemma allows us to determine the sign of $f(i)$ whenever $\nabla\psi = 0$ on $\Gamma_i$.

\begin{lemma}\label{lemma:signf}
Let $\psi\in C^3(\overline{D})$ be a solution to \eqref{eq:ellipsibis}. If $\nabla\psi = 0$ on $\Gamma_i$, then $(-1)^{i+1}f(i)\leq 0$.
\end{lemma}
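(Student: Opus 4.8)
The plan is to read off the sign of $f(i)$ from the Hessian of $\psi$ at a point of $\Gamma_i$ where the gradient vanishes, by decomposing the Laplacian into its tangential and normal second derivatives along the boundary curve. First I would dispose of the degenerate case: if $f(i)=0$ then $(-1)^{i+1}f(i)=0\le 0$ and there is nothing to prove, so I may assume $f(i)\neq 0$. Under this assumption Lemma~\ref{lemma:streamlineregularity} applies and guarantees that $\Gamma_i$ is a $C^2$ curve, which is exactly the regularity I need in order to differentiate $\psi$ twice \emph{along} $\Gamma_i$.

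Next, fixing $p\in\Gamma_i$, I would set $H:=\operatorname{Hess}\psi(p)$, let $\tau$ be the unit tangent to $\Gamma_i$ at $p$ and $n$ the unit normal pointing into $D$, and exploit the orthonormal decomposition of the trace
\[
f(i)=\Delta\psi(p)=\operatorname{tr} H=\tau^{T}H\tau+n^{T}H n .
\]
The strategy is then to show that the tangential term vanishes while the normal term carries a definite sign.

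For the tangential term I would parametrize $\Gamma_i$ by arclength $\gamma(s)$ with $\gamma(0)=p$ — legitimate precisely because $\Gamma_i\in C^2$ — and differentiate the identity $\psi(\gamma(s))\equiv i$ twice; using $\nabla\psi(p)=0$ the boundary term drops and this collapses to $\tau^{T}H\tau=0$. For the normal term I would study $t\mapsto\psi(p+tn)$, which enters $D$ for small $t>0$, satisfies $\psi(p)=i$, and has vanishing first derivative $\nabla\psi(p)\cdot n=0$; a second-order Taylor expansion (legitimate since $\psi\in C^3$) then forces $n^{T}Hn=\partial_n^2\psi(p)$ to inherit a sign from the position of $\psi$ relative to $i$ inside $D$. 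Since $0<\psi<1$ in $D$, for $i=0$ one has $\psi>\psi(p)$ along the inward normal, giving $n^{T}Hn\ge 0$, whereas for $i=1$ one has $\psi<\psi(p)$, giving $n^{T}Hn\le 0$. Substituting into the trace identity yields $f(0)=n^{T}Hn\ge 0$ and $f(1)=n^{T}Hn\le 0$, that is, $(-1)^{i+1}f(i)\le 0$ in both cases.

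The one genuinely delicate point — and the reason the case $f(i)=0$ must be peeled off at the start — is that the tangential computation $\tau^{T}H\tau=0$ is meaningless unless $\Gamma_i$ is twice differentiable, so the whole argument hinges on invoking Lemma~\ref{lemma:streamlineregularity}. Beyond that I expect only routine bookkeeping: fixing the orientation of $n$ so that the segment $\{p+tn\}$ genuinely lies in $\overline{D}$ for small $t>0$, and controlling the remainder in the Taylor expansions.
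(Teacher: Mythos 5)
Your proposal is correct and follows essentially the same route as the paper's proof: both peel off the case $f(i)=0$ (the paper does so implicitly by arguing by contradiction from $f(i)$ having the wrong strict sign), invoke Lemma~\ref{lemma:streamlineregularity} for the regularity of $\Gamma_i$, observe that the vanishing of $\nabla\psi$ on $\Gamma_i$ kills the tangential part of the Hessian so that $\Delta\psi=\partial_{\hat n}^2\psi=f(i)$ there, and then read off the sign of $\partial_{\hat n}^2\psi$ from a Taylor expansion along the inward normal using $0<\psi<1$ in $D$. The only cosmetic difference is that the paper differentiates the identity $\nabla\psi\equiv 0$ along $\Gamma_i$ (which needs only $C^1$ regularity of the curve) rather than differentiating $\psi\circ\gamma\equiv i$ twice, and phrases the conclusion as a contradiction rather than directly.
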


\begin{proof}
Assume $\nabla\psi=0$ on $\Gamma_1$ and $f(1) > 0$. From Lemma \ref{lemma:streamlineregularity}, $\Gamma_1$ is a $C^1$ curve and let $\hat{n}$ denote its exterior unit normal vector with respect to $D$. Since $\nabla\psi = 0$ on $\Gamma_1$, we have $\partial_{\hat{\tau}}^2 = \partial_{\hat{n}\hat{\tau}}\psi = 0$ on $\Gamma_1$ and thus $\Delta\psi = \partial_{\hat{n}}^2\psi = f(1)>0$. Hence, for $p\in\Gamma_1$ and $\delta>0$ small enough, there holds $p-\delta\hat{n}(p)\in D$ and $\psi(p-\delta\hat{n}(p)) >\psi(p)=1$, a contradiction. The proof for $\Gamma_0$ is the same, we omit the details.
\end{proof}

To conclude that $u$ is a shear flow if $M$ is the periodic channel, and a radial flow if $M$ is the annulus, we shall see that solutions $\psi$ to \eqref{eq:elliM}, \eqref{eq:ellionefree} and \eqref{eq:ellitwofree} are functions of one variable. Both \eqref{eq:ellionefree} and \eqref{eq:ellitwofree} are over-determined boundary value problems, and we show in the next section that solutions to these problems must necessarily be one-dimensional. Instead, \eqref{eq:elliM} is not a-priori an overdetermined problem, but it is posed already on $M$, the boundaries of which are already one dimensional (either horizontal straight lines in the channel or circular in the annulus). We shall show in the next section that this alone already gives invariance of the solutions with respect to the symmetries of the domain $M$, namely translational invariance in the periodic channel and rotational invariance in the annulus.

\section{Over-determined elliptic equations in the periodic channel}\label{sec:rigidoverdetchannel}
In this section we assume $M$ is the periodic channel and prove that the solutions to the overdetermined boundary value problems \eqref{eq:elliM}, \eqref{eq:ellionefree} and \eqref{eq:ellitwofree} must be one-dimensional. 
\subsection{The periodic channel}
We study here \eqref{eq:elliM}, which takes place when there are no singular streamlines in the periodic channel $\mathbb{T}\times(0,1)$. In particular, the domain where the equation is satisfied already has straight horizontal boundaries, and this fact is enough to constrain all solutions $\psi$ of \eqref{eq:elliM} to be $x$-translational invariant, that is, one-dimensional.
\begin{proposition}\label{prop:rigiditychannel}
Let $\psi$ be a solution of \eqref{eq:elliM}. Then, $\psi=\psi(y)$.
\end{proposition}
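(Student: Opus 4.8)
The plan is to prove the stronger statement that $\psi$ is invariant under the reflection $x\mapsto 2a-x$ about an \emph{arbitrary} vertical line $\{x=a\}$; since $a$ is unconstrained, symmetry about two distinct lines already composes into a nontrivial translation, and symmetry about all of them forces $\psi$ to be independent of $x$. First I would record the structural input that makes this tractable. By elliptic regularity $\psi$ is smooth up to the flat boundaries (the data are constant there and $f\in C[0,1]$), and because every streamline is a non-contractible loop, $\psi$ has no interior local extrema: a would-be interior maximum or minimum would be surrounded by arbitrarily small \emph{contractible} level loops, contradicting the laminar hypothesis and the exclusion of open zero-velocity sets in this section. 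Thus $\psi$ never attains $0$ or $1$ inside $M$, and all its interior critical points are non-extremal (saddle-type) and lie on regular-singular streamlines in the sense of Lemma \ref{lemma:streamlines}.

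For a flat boundary the instinctive tool is the moving plane method in the $x$-direction, but on the periodic channel it cannot be initiated: reflecting across $\{x=a\}$ maps $M$ to itself, and the resulting ``cap'' has no genuine boundary in $x$ on which to seed the comparison. I would therefore make rigorous the variational heuristic of the introduction through Continuous Steiner Symmetrization in the $x$-variable. Writing $F'=f$ and $E[\psi]=\int_M\left(\tfrac12|\nabla\psi|^2+F(\psi)\right)dA$, the function $\psi$ is a critical point of $E$ under perturbations vanishing on $\partial M$, precisely because it solves \eqref{eq:elliM}. The symmetrization flow $\{\psi^t\}_{t\ge 0}$ preserves the $x$-distribution of $\psi$ at each height $y$, hence preserves $\int_M F(\psi)$, while the Dirichlet energy is non-increasing along it; so $t\mapsto E[\psi^t]$ is non-increasing. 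Criticality of $\psi$ then forces the right derivative of this map at $t=0$ to vanish, placing $\psi$ in the equality case of the Steiner rearrangement inequality for the Dirichlet energy.

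I would then invoke the local symmetry theory for this equality case (in the style of Brock): vanishing of the first-order energy defect implies that, on each connected component of $M\setminus\{\nabla\psi=0\}$ where $\psi$ is non-constant, $\psi$ is reflectionally symmetric in $x$ about a vertical line and strictly monotone on either side of it. At this point the topological input closes the argument: if $\psi$ genuinely depended on $x$, this symmetric--monotone (unimodal) structure in $x$ would produce a ``ridge'' of maxima, around which the level sets $\{\psi=c\}$ are \emph{contractible} loops, contradicting the assumption that every streamline is non-contractible. Hence $\psi$ must be $x$-independent. A sliding argument, comparing $\psi$ with its translates $\psi^\tau(x,y)=\psi(x+\tau,y)$ and exploiting the connectedness of $M$ and the absence of singular streamlines, serves to propagate the local symmetry consistently across the whole channel and to confirm that the reflection centers coincide.

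The main obstacle I anticipate is the critical set. The regular-singular streamlines carry points with $\nabla\psi=0$, and these are exactly where the continuous symmetrization degenerates and where the equality-case analysis is delicate: one must rule out degenerate locally symmetric ``caps'' with mismatched centers, control the possibility of flat pieces in $x$ along the critical set, and ensure that the several local reflection symmetries glue across critical points into a single global symmetry. Managing this patching—rather than the (standard) energy monotonicity or the (soft) topological exclusion of the unimodal profile—is where the real work lies.
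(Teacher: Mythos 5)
Your proposal takes a genuinely different route from the paper (which proves this proposition by a short sliding argument: compare $\psi$ with $\psi(\cdot+\tau\xi)$ for directions $\xi=(\xi_1,\xi_2)$ with $\xi_2>0$, seed the comparison near $\tau\xi_2=1$ using the Dirichlet data $\psi=1$ at $y=0$ and $\psi=0$ at $y=1$, push $\inf J$ to $0$ via the strong maximum principle and unique continuation, then let $\xi_2\to0$). Unfortunately, your route has a genuine gap at its final, decisive step. The claim that a locally $x$-symmetric, unimodal-in-$x$ profile that genuinely depends on $x$ must produce contractible level loops around its ``ridge of maxima'' is false. Consider $\psi_0(x,y)=1-y+\epsilon\sin(\pi y)\cos x$ on $\mathbb{T}\times(0,1)$ with $\epsilon$ small: it satisfies the boundary values, for each fixed $y$ the slice $x\mapsto\psi_0(x,y)$ is symmetric about $x=0$ and monotone on either side (so it is exactly the kind of locally symmetric, unimodal configuration your symmetrization step would produce), and since $\partial_y\psi_0<0$ everywhere each level set is a graph $y=h_c(x)$ over $\mathbb{T}$, hence a non-contractible loop. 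Yet $\psi_0$ depends on $x$. So non-contractibility of streamlines plus local symmetry in $x$ does not force $x$-independence; at that stage one must use the equation again (as the paper does through the comparison function and the boundary data in $y$), and your proposal does not say how. Relatedly, your opening plan to obtain reflection symmetry about \emph{every} vertical line cannot come out of a rearrangement argument: the equality case only yields symmetry about \emph{some} center on each component of $\{\partial_x\psi\neq0\}$, and that center is intrinsic to $\psi$, not a free parameter you can vary to compose two reflections into a translation.

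There are also two technical issues upstream that you flag only partially. First, Brock's Continuous Steiner Symmetrization and the local-symmetry equality-case theory are formulated for symmetrization in a direction in which the domain is bounded and Steiner-symmetric, with $u\geq0$ vanishing on $\partial\Omega$; symmetrizing in the periodic $x$-direction on $\mathbb{T}\times(0,1)$ is really a cap (circular) rearrangement on the cylinder, with $\psi\equiv1$ on part of $\partial M$, and neither the P\'olya--Szeg\H{o} inequality nor the local-symmetry conclusion is off the shelf in that setting. Second, ``criticality forces the right derivative of $t\mapsto E[\psi^t]$ to vanish'' does not follow from $\psi$ being a critical point, because the symmetrization flow is not a differentiable variation at $t=0$; Brock's argument extracts the vanishing of the energy defect from the PDE itself via a quantitative inequality, not from abstract criticality. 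For comparison, when the paper does invoke Brock (Propositions \ref{prop:rigidonefree} and \ref{prop:rigidtwofree}), it symmetrizes in the \emph{bounded} $y$-direction after an even reflection across $y=0$ composed with an arbitrary $\lambda$-translation, and it is the validity of the local symmetry for every $\lambda$ that upgrades to translation invariance -- a mechanism your $x$-symmetrization plan does not have.
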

\begin{proof}
We shall use the sliding method. Take $\xi=(\xi_1,\xi_2)$, with $\xi_2>0$ and $\tau>0$. We define the comparison function
\begin{equation}
w^{\tau}(\x) = \psi(\x+\tau\xi) - \psi(\x)
\end{equation}
and denote
\begin{equation}
J:= \lbrace \tau\in (0, 1/\xi_2):, \quad w^\tau(\x) <0, \text{  for all } \x \in D^\tau \rbrace,
\end{equation}
where further
\begin{equation}
D^\tau := \lbrace \x\in D\,: \x+\tau\xi \in D \rbrace = \mathbb{T}\times (0, 1-\tau\xi_2)
\end{equation}
Since $\psi(x,0)=1$ and $\psi(x,1) = 0$ for all $x\in\mathbb{T}$, by continuity there holds $w^\tau(\x)<0$ in $D^\tau$, for $\tau\xi_2$ sufficiently close to 1 and we see that $J$ is non-empty. Next, we define $\tau_0 := \inf J$ and we claim that $\tau_0=0$. Otherwise, if $\tau_0>0$ then by continuity of $w^\tau$ and by definition of infimum, there exists some $\x_0\in \overline{D^{\tau_0}}$ such that $w^{\tau_0}(\x_0) = 0$. Given that $0<\psi<1$ in $D$, we see that $w^{\tau_0}(\x) < 0$, for all $\x\in \partial D^{\tau_0}$, since either $\x=(x,0)$ or $\x+\tau_0\xi=(x+\tau_0\xi_1,1)$. Therefore, both $\x_0$ and $\x_0+\tau_0\xi$ are interior points of $D^{\tau_0}$, where $f$ is $C^1$ and thus we can conclude from the maximum principle that $w^\tau \equiv 0$, initially in a small ball centred at $\x_0$ and then by unique continuation (recall $f\in C^1$ in the interior of $D^{\tau_0}$) and finally to the whole of $\overline{D^{\tau_0}}$, thus contradicting $w^{\tau_0}<0$ there. As a result, $\tau_0 = 0$. Finally, we take $\xi_2\rightarrow 0$ so that now $w^\tau(\x)\leq 0$ in $D$, for all $\tau\geq 0$. This requires $\psi$ to be invariant by arbitrary horizontal translations, namely $\psi=\psi(y)$ alone. 
\end{proof}

\subsection{Over-determined elliptic equations with free boundaries}
We now turn to the over-determined elliptic problems \eqref{eq:ellionefree} and \eqref{eq:ellitwofree}. In general, we cannot directly apply the sliding method used in Proposition \ref{prop:rigiditychannel} here because the domain $D$ may not be invariant under $x$-translations. Instead, since $\nabla\psi=0$ on the free boundaries, we choose to extend the stream-function $\psi$ from $D=\Omega_{\Gamma_0}\setminus \overline{\Omega_{\Gamma_1}}$ to the periodic channel $M=\mathbb{T}\times(0,1)$ through its boundary values. Namely, we extend $\psi \equiv 1$ in the region of $M$ bounded above by $\Gamma_1$ and $\psi \equiv 0$ in the region of $M$ bounded below by $\Gamma_0$. This extension produces a $C^1$ function that weakly satisfies a modified semilinear elliptic equation, now in a domain with horizontal straight boundaries. 

However, the non-linearity is no longer $C^1$ in the interior of this larger domain, and the sliding method is still not an option. Nevertheless, the periodic channel now enjoys symmetry and, once $\psi$ and $M$ are evenly reflected to $\mathbb{T}\times(-1,0)$, the semilinear elliptic equation falls within the applicability of the theory of local symmetry through the Continuous Steiner Symmetrization, a technique developed by Brock in \cite{Brock} to ascertain the symmetry of positive weak solutions to elliptic problems in domains with symmetries. Here we introduce the concept of local symmetry of functions, and we refer the interested reader to \cite{Brock, Brock2} for a complete account on the Continuous Steiner Symmetrization and its usability on variational problems and elliptic equations.
\begin{defi}
Let $\psi\geq 0$ and $\psi\in C^1(\lbrace 0 < \psi < \sup \psi \rbrace)$. Suppose this last set is open. Let $\x_1=(x_1,y_1)$ such that
\begin{equation*}
0 < \psi(\x_1) < \sup \psi, \quad \partial_y\psi(\x_1)>0,
\end{equation*}
and let $\x_2=(x_1,y_2)$ be the unique point such that
\begin{equation*}
y_2>y_1, \quad \psi(\x_1) = \psi(\x_2) < \psi(x_1,y), \quad \forall y\in (y_1,y_2).
\end{equation*}
We say that \emph{$\psi$ is locally symmetric in the direction $y$} if
\begin{equation*}
\partial_x\psi (\x_1) = \partial_x\psi(\x_2), \quad \partial_y\psi(\x_1) = - \partial_y\psi(\x_2).
\end{equation*}
\end{defi}
Locally symmetric functions admit a decomposition of their support,
\begin{equation*}
\lbrace 0 < \psi < \sup \psi \rbrace = \bigcup_{k\geq 1}\left( U_k^1 \cup U_k^2 \right) \bigcup S,
\end{equation*}
where $U_k^1$ is a maximally connected component of $\lbrace 0 < \psi < \sup \psi \rbrace \cap \lbrace \partial_y\psi > 0 \rbrace$, $U_k^2$ denotes its reflection about the hyperplane $\lbrace y = y_k \rbrace$, for some $y_k\in \R$ and $\partial_y\psi = 0 $ in $S$. In other word, for any $k\geq 1$ and any $\x=(x,y)\in U_k^1$, there holds
\begin{equation}
\psi(x,y) = \psi(x,2y_k - y) < \psi(x,z), \quad \forall z\in (y, 2y_k - y).
\end{equation}
The main goal of this section is to first show that appropriate extensions and transformations of solutions to \eqref{eq:ellionefree} and \eqref{eq:ellitwofree} are locally symmetric, and then upgrade this local symmetry to global symmetry of the original solutions. To do that, we first need a preliminary technical lemma.
\begin{lemma}\label{lemma:technical}
Let $D=\Omega_{\Gamma_0}\setminus\overline{\Omega_{\Gamma_1}}$ and $\psi$ be a solution of 
\begin{equation}\label{eq:psielliptic}
\begin{cases}
\Delta\psi = f(\psi), \quad 0<\psi<1 & \text{ in }D, \\
\psi = 0, & \text{ in } \Gamma_0, \\
\psi = 1, & \text{ in } \Gamma_1,
\end{cases}
\end{equation}
for some $f\in C[0,1]\cap C^1(0,1)$. Further assume that $D$ contains no singular streamlines of $\psi$ and $\partial_y\psi<0$ on all regular streamlines of $\psi$ in $D$. Then, all regular-singular streamlines contain points where $\partial_y\psi < 0$.
\end{lemma}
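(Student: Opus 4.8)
The plan is to argue by contradiction and reduce the statement to a purely local assertion on a single ball, where a linearized maximum principle clashes with the density of regular streamlines. Suppose some regular-singular streamline $\Gamma_c\subset D$ (so $0<c<1$) has $\partial_y\psi\geq 0$ at \emph{every} one of its points. Since $\Gamma_c$ is regular-singular it carries a point $q$ with $\nabla\psi(q)\neq 0$; as $\Gamma_c$ lies in the open interior of $D$ and $0<c<1$, I may choose $\delta>0$ with $\overline{B_\delta(q)}\subset D\cap\{0<\psi<1\}$.

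First I would show that $v:=\partial_y\psi\leq 0$ throughout $B_\delta(q)$. By Lemma \ref{lemma:streamlines}, every point of $M$ either lies on a regular streamline or is a limit of points lying on regular streamlines, so the set of points on regular streamlines is dense in $B_\delta(q)$. At each such point the hypothesis gives $\partial_y\psi<0$, and continuity of $\partial_y\psi$ upgrades this to $v\leq 0$ on all of $B_\delta(q)$. In particular, $q\in\Gamma_c$ forces $\partial_y\psi(q)\geq 0$ by the contradiction hypothesis, while the ball estimate gives $v(q)\leq 0$; hence $v(q)=0$, so $v$ attains an interior maximum equal to $0$ at $q$.

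Next I would linearize. Differentiating $\Delta\psi=f(\psi)$ in $y$ (legitimate since $\psi\in C^3$ and $f\in C^1(0,1)$, with $0<\psi<1$ on $B_\delta(q)$) gives
\begin{equation}
\Delta v = f'(\psi)\,v \qquad \text{in } B_\delta(q),
\end{equation}
a linear uniformly elliptic equation for $v$ with bounded zeroth-order coefficient $-f'(\psi)$ of a priori unknown sign. The crux is to apply the strong maximum principle at the interior maximum $q$ in spite of this: because the maximum value is exactly $0$ and $v\leq 0$, I can pass to the operator whose zeroth-order coefficient is the negative part of $-f'(\psi)$ (the sign-indefinite contribution $c(x)v$ has a favorable sign when $v\leq 0$ and $v(q)=0$), reducing to an elliptic operator with nonpositive zeroth-order term. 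Hopf's strong maximum principle then applies to the nonpositive function $v$ attaining its maximum $0$ at the interior point $q$, forcing $v\equiv 0$ on $B_\delta(q)$.

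Finally I would close the contradiction: $v\equiv 0$ on $B_\delta(q)$ is incompatible with the density of regular streamlines, since $B_\delta(q)$ contains points on regular streamlines where $\partial_y\psi<0$. Hence the contradiction hypothesis is untenable, and every regular-singular streamline must contain a point with $\partial_y\psi<0$. I expect the only delicate step to be the use of the maximum principle with an unsigned zeroth-order coefficient; this is exactly where the fact that the nonpositive maximum of $v$ is \emph{zero} is essential, as it neutralizes the sign of $f'(\psi)$.
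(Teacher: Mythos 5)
Your proof is correct, and it takes a genuinely different route from the paper's. You linearize: differentiating $\Delta\psi=f(\psi)$ in $y$ gives $\Delta v=f'(\psi)v$ for $v=\partial_y\psi$, you get $v\leq 0$ from density of regular streamlines (Lemma \ref{lemma:streamlines}), and you apply the strong maximum principle at the interior touching point $q$ where $v=0$; the sign-indefinite zeroth-order term is indeed harmless precisely because the interior maximum value is zero (writing $\Delta v-(f'(\psi))^+v=-(f'(\psi))^-v\geq 0$ reduces to an operator with nonpositive zeroth-order coefficient). The paper instead works with the reflection comparison function $w(\x)=\psi(\pi(\x))-\psi(\x)$ about the horizontal hyperplane through $p$: it first uses the flow ODE $\dot\gamma_c=\nabla^\perp\psi(\gamma_c)$ to show that $\partial_y\psi\equiv 0$ on $\Gamma_c$ forces $\Gamma_c$ to contain a vertical segment, which supplies an interior zero of $w$; the equation $\Delta w+gw=0$ has $g$ a bounded difference quotient of $f$, and the final contradiction comes from integrating $\partial_y\psi$ along a vertical segment through a regular point. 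Your argument is shorter and avoids the vertical-segment geometry entirely, at the cost of needing the pointwise chain rule $\partial_y(f(\psi))=f'(\psi)\partial_y\psi$ — which is available here since $f\in C^1(0,1)$, $\psi\in C^3$, and $\overline{B_\delta(q)}$ is compactly contained in $\{0<\psi<1\}$ so that $f'$ is bounded on the relevant range; the paper's difference-quotient formulation would survive with $f$ merely locally Lipschitz and is stylistically aligned with the reflection/Steiner-symmetrization machinery used throughout that section. Both proofs ultimately rest on the same two pillars: $\partial_y\psi\leq 0$ by density, and a strong maximum principle at an interior point where equality is attained, contradicted by the strict inequality on nearby regular streamlines.
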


\begin{proof}
Firstly, by density of regular streamlines in $D$, we have $\partial_y\psi \leq 0$ in $D$. Let $\Gamma_c$ be a regular-singular streamline and assume towards a contradiction that $\partial_y\psi|_{\Gamma_c}\equiv 0$. Since it is a regular-singular streamline, there exists some $p\in \Gamma_c$ such that $\nabla\psi(p)\neq 0$, namely $\partial_x\psi(p)\neq 0$. By continuity, there exists some open ball $B_\delta(p)$ such that $\partial_x\psi\neq 0$ in $B_\delta(p)$. Next, we define $\gamma_c(s)=(\gamma_c^1(s), \gamma_c^2(s))$ as the solution to 
\begin{equation}\label{eq:gammapsi}
\begin{cases}
\dot{\gamma}_c(s) = \nabla^\perp\psi(\gamma_c(s)), & \\
\gamma_c(0) = p,
\end{cases}
\end{equation} 
which is well-defined for $|s|$ sufficiently small, there exists some $s_0>0$ such that $\gamma_c(s)\in B_\delta(p)$, for all $|s|< s_0$.  By definition of $\gamma_c$, there holds $\psi(\gamma_c(s)) =c$ since
\begin{equation}
0=\dot{\gamma}_c^1(s)\partial_x\psi(\gamma_c(s)) + \dot{\gamma}_c^2(s)\partial_y\psi(\gamma_c(s)),
\end{equation}
namely, $\gamma_c(s)$ gives a local parametrization of $\Gamma_c$ in a neighbourhood of $p$. Moreover, by continuity we see that $\partial_x\psi(\gamma_c(s))\neq 0$, for all $|s| < s_0$. Since $\partial_y\psi\equiv 0$ in $\Gamma_c$ and together with \eqref{eq:gammapsi}, we deduce that $\dot{\gamma}_c^1(s) = 0$ and $\dot{\gamma}_c^2(s) = \partial_x\psi(\gamma_c(s))\neq 0$, for all $|s|< s_0$. In particular, there exists some $\epsilon>0$ such that
\begin{equation}
\lbrace p + t(0,1) : t\in (-\epsilon, \epsilon) \rbrace \subset \Gamma_c,
\end{equation}
that is, the streamline $\Gamma_c$ contains a completely vertical segment.

By taking $\epsilon$ sufficiently small, we may assume that $B_\epsilon(p)\subset B_\delta(p)$ and let $p_0 = p + \frac{\epsilon}{2}(0,1)\in B_\epsilon(p)$, it is such that $B_{\frac{\epsilon}{4}}(p_0) \subset B_\epsilon(p)$. Assume $p=(p_x,p_y)$ and let $\pi(\x)$ denote the reflection of $\x\in B_{\frac{\epsilon}{4}}(p_0)$ with respect to the hyperplane $\lbrace y = p_y \rbrace$. Define also the comparison function 
\begin{equation}
w(\x) = \psi(\pi(\x)) - \psi(\x)
\end{equation}
in $B_{\frac{\epsilon}{4}}(p_0)$ and observe that, since $\partial_y\psi \leq 0$, there holds $w\geq 0$. Moreover, since $\psi$ satisfies the elliptic equation \eqref{eq:psielliptic} in $D$, we see that
\begin{equation}
\Delta w(\x) + g(\x)w(\x) = 0,
\end{equation}
for some $g\in L^\infty(B_{\frac{\epsilon}{4}}(p_0))$. Crucially, $p_0$ is an interior point of $B_{\frac{\epsilon}{4}}(p_0)$ and $w(p_0)=0$, since both $p_0\in \Gamma_c$ and $\pi(p_0) = p - \frac{\epsilon}{2}(0,1)\in \Gamma_c$. By the maximum principle, we conclude that $w\equiv 0$ in $B_{\frac{\epsilon}{4}}(p_0)$.

By density, there exists some $q\in B_{\frac{\epsilon}{4}}(p_0)$ belonging to a regular streamline. By hypothesis, $\partial_y\psi < 0$ in $q$ and by continuity in a small neighbourhood of $q$. Now, since $w \equiv 0$ in $B_{\frac{\epsilon}{4}}(p_0)$, $\partial_y\psi \leq 0$ and $\partial_y\psi <0$ on $q=(q_x,q_y)$ and also on $\pi(q) = (q_x,2p_y-q_y)$ because $\pi(q)$ belongs to the same regular streamline of $q$, we obtain
\begin{equation}
0 = \psi(\pi(q)) - \psi(q) = \int_{p_y-q_y}^{q_y-p_y}\partial_y\psi(p + s(0,1)) \rmd s < 0,
\end{equation}
thus reaching a contradiction. Hence, we must have some $p\in \Gamma_c$ with $\partial_y\psi(p) < 0$.
\end{proof}

We are now in position to show that solutions to the overdetermined elliptic equation with one free boundary \eqref{eq:ellionefree} are one-dimensional.

\begin{proposition}\label{prop:rigidonefree}
Let $\psi $ a solution to \eqref{eq:ellionefree} in $D=\mathbb{T}\times(0,1)\setminus \overline{\Omega_{\Gamma_1}}$ such that all the streamlines of $\psi$ in $D$ are not singular. Then, $\psi = \psi(y)$.
\end{proposition}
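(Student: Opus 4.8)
The plan is to remove the free boundary by extending $\psi$ across $\Gamma_1$ and thereby reduce to a domain with Euclidean symmetry, where the method of Continuous Steiner Symmetrization becomes available. First I would extend $\psi$ from $D=\mathbb{T}\times(0,1)\setminus\overline{\Omega_{\Gamma_1}}$ to the whole channel $M=\mathbb{T}\times(0,1)$ by setting $\psi\equiv 1$ on $\overline{\Omega_{\Gamma_1}}$. The over-determined condition $\nabla\psi=0$ on $\Gamma_1$ is exactly what guarantees that this extension is $C^1$ across $\Gamma_1$, since the gradient vanishes from both sides; as the normal derivative does not jump, the distributional Laplacian carries no singular part on $\Gamma_1$, and the extended function weakly solves $\Delta\psi=\tilde f(\psi)$ on $M$, where $\tilde f=f$ on $[0,1)$ and $\tilde f(1)=0$. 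Crucially, $\tilde f$ need not be continuous at $1$, since Lemma \ref{lemma:signf} only yields $f(1)\le 0$; this is precisely why the sliding argument of Proposition \ref{prop:rigiditychannel}, which relied on $f\in C^1$ and unique continuation, is no longer applicable.

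Next I would exploit the plateau created by the extension. Because $\Gamma_1$ is an interior streamline, $\psi\equiv 1$ on a neighborhood of $\mathbb{T}\times\{0\}$, so the even reflection of $\psi$ across $\{y=0\}$ produces a nonnegative $C^1$ weak solution of $\Delta\psi=\tilde f(\psi)$ on the symmetric channel $\tilde M=\mathbb{T}\times(-1,1)$, vanishing on $\{y=\pm 1\}$, symmetric about $\{y=0\}$, and attaining its maximum value $1$ on an interior set. Since $\tilde M$ is symmetric and convex in the $y$-direction, this is exactly the setting of Brock's theory, and I would invoke the Continuous Steiner Symmetrization to conclude that $\psi$ is locally symmetric in the direction $y$, obtaining the decomposition $\{0<\psi<1\}=\bigcup_k(U_k^1\cup U_k^2)\cup S$ with $\partial_y\psi=0$ on $S$ and each $U_k^1$ reflected to $U_k^2$ about a horizontal line $\{y=y_k\}$.

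The heart of the argument is upgrading this local symmetry to full translation invariance. I would first use the hypothesis that $D$ contains no singular streamlines: as in the proof of Lemma \ref{lemma:mainellipticeq}, each value $c\in(0,1)$ is attained on a unique streamline, so all level sets are connected, which forbids distinct reflection hyperplanes and forces a single symmetry axis. From this single axis the local-symmetry decomposition yields monotonicity $\partial_y\psi\le 0$ in the upper region $\{y>0\}$. Next I would rule out that $S$ has interior in $\{0<\psi<1\}$: on any open set where $\partial_y\psi\equiv 0$ one has $\psi=\psi(x)$ locally, whose level curves are vertical segments, incompatible with non-contractible loops unless $\psi$ is constant there, i.e. a singular streamline, which is excluded. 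To control the interior stagnation points carried by regular-singular streamlines I would apply Lemma \ref{lemma:technical}, which guarantees that every such streamline carries points with $\partial_y\psi<0$, so that no level set collapses to a stagnant horizontal segment and the graph structure of the level sets persists. Combining connected level sets, strict monotonicity off a negligible set, and the over-determined Neumann condition $\partial_{\hat n}\psi=0$ on $\Gamma_1$, I would conclude that the super-level sets are $x$-independent slabs, that $\Gamma_1$ is a horizontal line, and hence that $\psi=\psi(y)$.

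I expect the main obstacle to be precisely this final upgrade. Local symmetry in $y$ alone is compatible with genuinely $x$-dependent configurations: at the linearized level, perturbations of the form $g(y)+\varepsilon h(y)\cos(x)$ are symmetric and monotone yet not one-dimensional, so the over-determined condition on $\Gamma_1$ must be used in an essential, Serrin-type way to eliminate them. The delicacy is that this rigidification has to be carried out in the presence of two difficulties simultaneously: the loss of regularity of $\tilde f$ at the maximal value (which is what forced the symmetrization route in the first place) and the interior stagnation permitted on regular-singular streamlines. It is exactly here that Lemma \ref{lemma:technical} and the absence of singular streamlines in $D$ do the decisive work.
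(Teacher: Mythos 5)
Your opening moves match the paper's: extend $\psi\equiv 1$ across $\Gamma_1$ (the overdetermined condition $\nabla\psi=0$ making the extension $C^1$ and the equation weakly satisfied with a nonlinearity that is continuous plus a non-increasing jump part), reflect evenly into $\mathbb{T}\times(-1,0)$, and invoke Brock's local symmetry in the $y$-direction. But there is a genuine gap exactly where you suspect one: local symmetry of the \emph{single} even reflection, together with connected level sets and $\partial_y\psi\le 0$, does not imply $x$-independence --- your own example $g(y)+\varepsilon h(y)\cos x$ already satisfies all of these properties --- and your proposed fix (``use the Neumann condition in a Serrin-type way'') is never instantiated. The paper does not run a Serrin/moving-plane argument here. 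Its key device, which your proposal omits, is the \emph{one-parameter family of translated reflections} $\varphi_\lambda(x,y)=\psi(x,y)$ for $y\ge 0$ and $\psi(x-\lambda,-y)$ for $y<0$: since horizontal translation preserves the equation, Brock's theorem applies to every $\varphi_\lambda$, and the plateau $\psi\equiv 1$ near $\mathbb{T}\times\{0\}$ pins each symmetry hyperplane to $\{y=0\}$ (Step 4 of the paper), whence $\psi(x,y)=\varphi_\lambda(x,-y)=\psi(x-\lambda,y)$ for \emph{all} $\lambda$, which is precisely $x$-independence.

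The missing family of reflections is also needed earlier than you realize. Lemma \ref{lemma:technical}, which you want to apply to handle regular-singular streamlines, has as a hypothesis that $\partial_y\psi<0$ on \emph{all} regular streamlines. The paper establishes this by observing that at the lowest point $(x_0,y_0)$ of a reflected regular streamline one has $\partial_x\varphi_\lambda=0$, hence $\partial_y\varphi_\lambda>0$ there, and local symmetry sends this to a point of $\Gamma_c\cap D$ with $\partial_y\psi<0$; varying $\lambda$ sweeps $x_0$ over all of $\mathbb{T}$ and covers the entire streamline (after a connectedness argument showing the swept set is all of $\Gamma_c$). With only the $\lambda=0$ reflection you obtain strict negativity at a single point of each regular streamline, which is not enough to feed into Lemma \ref{lemma:technical} or to conclude that level sets are graphs. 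So the proposal correctly identifies the launching pad and the obstacle, but the decisive mechanism --- sliding the reflection and forcing the symmetry axis to be $\{y=0\}$ for every slide --- is absent, and without it the argument does not close.
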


\begin{proof}
We divide the proof in several steps.
\subsection*{Step 1} The first step in the proof is to extend $\psi\equiv 1$ in $D_0:= \mathbb{T}\times(0,1)\setminus \overline{D}$ and define
\begin{equation}
f_0(t) = -f(t) + g(t), \quad g(t) = \begin{cases}
f(1), & t = 1, \\
0, & t < 1.
\end{cases}
\end{equation}
It is clear to see that $\psi$ now weakly satisfies
\begin{equation}\label{eq:extendedellionefree}
\begin{cases}
-\Delta\psi = f_0(\psi), & \text{in }\mathbb{T}\times(0,1), \\
\psi(x,1) = 0, & x\in\mathbb{T}, \\
\psi(x,0) = 1, & x\in\mathbb{T},
\end{cases}
\end{equation}
and $\lbrace 0 < \psi <1 \rbrace = D$ is an open set. Note that from Lemma \ref{lemma:signf} we have $f(1)\leq 0$. If $f(1)=0$, then $g(t)=0$ and $\psi$ solves \eqref{eq:extendedellionefree} continuously. On the other hand, if $f(1)<0$, then, by Lemma \ref{lemma:streamlineregularity}, the free boundary $\Gamma_1$ is $C^2$ regular, we can integrate by parts and use the overdetermined boundary condition $\nabla\psi=0 $ on $\Gamma_1$ to show that $\psi$ solves \eqref{eq:extendedellionefree} in a weak sense. 

Now, let $\lambda\in \R$, and define the $\lambda$-translated reflection of $\psi$ on $\mathbb{T}\times[-1,1]$ by
\begin{equation}
\varphi_\lambda(x,y) = \begin{cases}
\psi(x,y), & y\geq 0 \\
\psi(x-\lambda,-y), & y< 0. 
\end{cases}
\end{equation}
It is then easy to see that $\varphi_\lambda$ satisfies
\begin{equation}
\begin{cases}
-\Delta\varphi_\lambda = f_0(\varphi_\lambda) & \text{ in }\mathbb{T}\times(-1,1) \\
\varphi_\lambda(x,-1) = \varphi_\lambda(x,1) =0, & x\in \mathbb{T}
\end{cases}
\end{equation}
and $\lbrace 0 < \psi < 1 \rbrace =D \cup D_{\lambda}$, where $D_\lambda\subset \mathbb{T}\times (-1,0)$ denotes the $\lambda$-translated reflection of $D$ with respect to the hyperplane $\lbrace y = 0 \rbrace$.

\begin{figure}[h!]
\centering
\includegraphics[width=0.8\textwidth]{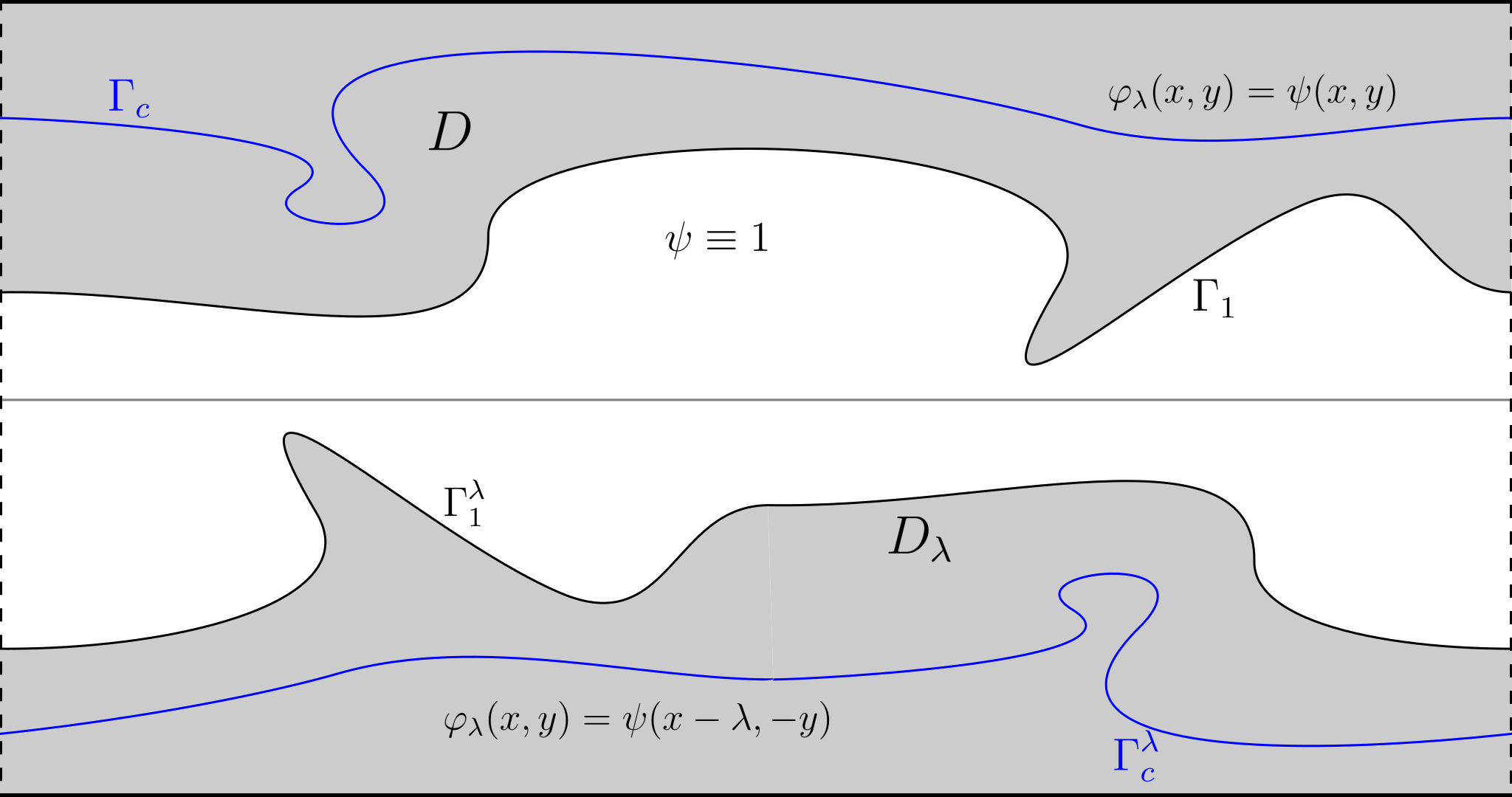}
\caption{The domain $D$ and $\psi$ are first reflected with respect to $y=0$ and then slided by a $\lambda$-translation in the $x$ direction. The free boundary $\Gamma_1$ and a regular streamline $\textcolor{blue}{\Gamma_c}$ are also reflected and slided into $\Gamma_1^\lambda$ and $\textcolor{blue}{\Gamma_c^\lambda}$, respectively. }
\end{figure}

\subsection*{Step 2} Using Theorem 7.2 of \cite{Brock}, note that $-f(t)$ is continuous and $g(t)$ is non-increasing, we see that $\varphi_\lambda$ is locally symmetric in the $y$ variable. Now, for all regular values $c\in(0,1)$, let $(x_0,y_0)\in \mathbb{T}\times(-1,0)$ such that $\varphi_\lambda(x_0,y_0) = c$ and $y_0$ is the largest $y<0$ for which $\varphi_\lambda(x,y) = c$, for some $x\in\mathbb{T}$. Let $\gamma_c(s)=(\gamma_c^1(s), \gamma_c^2(s))$ be a regular closed curve parametrizing the streamline $\lbrace \varphi_\lambda = c\rbrace$ such that $\gamma_c(0) = (x_0,y_0)$. Since $y_0$ is the maximal height of $\gamma_c$, there holds $\dot{\gamma}_c^2(0) = 0$. On the other hand, since $\varphi_\lambda(\gamma_c(s)) =c$ for all $s\in\mathbb{R}$, we deduce that
\begin{equation}
0 = \dot{\gamma}_c^1(s) \partial_x\varphi_\lambda(\gamma_c(s)) + \dot{\gamma}_c^2(s) \partial_y\varphi_\lambda(\gamma_c(s)) 
\end{equation}
so that for $s=0$, we must have $0=\partial_x\varphi_\lambda(\gamma_c(0)) = \partial_x\varphi_\lambda(x_0,y_0)$. Since $\nabla\psi\neq 0$ in regular streamlines, we conclude that $\partial_y\varphi_\lambda(x_0,y_0)\neq 0$, and thus $\partial_y\varphi_\lambda(x_0,y_0)>0$. Otherwise, we would have two distinct non-contractible stream-lines carrying the same value $0<c'<c$, both included in $\lbrace 0 < \varphi_\lambda < 1 \rbrace \cap \mathbb{T}\times(-1,0)$, so that there would exist a singular streamline, which is forbidden in $D$ and thus also in $D_\lambda$.

Now, the point $(x_0,y_0)$ and a small neighbourhood of it belong to a maximally connected component $U$ of $\lbrace 0 < \varphi_\lambda < 1\rbrace \cap \lbrace \partial_y\varphi_\lambda > 0 \rbrace$. By the local symmetry in $y$, the open set $U$ gets reflected with respect to some hyperplane $\lbrace y = y_\lambda \rbrace$. In particular, for $(x_0,y_0)$, there holds,
\begin{equation}
\varphi_\lambda(x_0,y_0) = \varphi_\lambda(x_0, 2y_\lambda - y_0), \quad	\partial_y \varphi_\lambda(x_0,y_0) = - \partial_y \varphi_\lambda(x_0,2y_\lambda - y_0)>0,
\end{equation}
with $2y_\lambda - y_0 > 0$.
Indeed, since $\varphi_\lambda(x_0,y_0) =c$ and $(x_0,y_0)$ denotes the closest point of $\Gamma_c$ in $D_\lambda$ to $\lbrace y = 0 \rbrace$, the unique $y>y_0$ satisfying $\varphi_\lambda(x_0,y_0)=\varphi_\lambda(x_0,y) < \varphi_\lambda(x_0,z)$, for all $z\in (y_0,y)$ must be in $D$, namely $y>0$. Moreover, $\partial_y \varphi_\lambda(x_0,y)<0$. Varying $\lambda\in \R$, we are sliding the streamline $\lbrace \varphi_\lambda = c \rbrace \cap D_\lambda$ together with its maximum $(x_0,y_0)$, so that $x_0$ ranges along all of $\mathbb{T}$. Using the local symmetry for each $\lambda\in \R$, we conclude that $\partial_y\varphi_\lambda(q)<0$ , for all $q\in\lbrace \varphi_\lambda = c \rbrace \cap D$, that is, $\partial_y\psi(q)<0$ , for all $q\in\lbrace \psi = c \rbrace$. Hence, by definition of $\varphi_\lambda$, we likewise obtain $\partial_y\varphi_\lambda > 0$ on $\lbrace \varphi_\lambda = c \rbrace \cap D_\lambda$. 

Let us explain this last argument more carefully. For all $x\in\mathbb{T}$ we define
\begin{equation}
y_x = \min \lbrace y\in (0,1) : (x,y) \in \Gamma_c \rbrace.
\end{equation}
and we claim that the map $Y(x)=y_x:\mathbb{T}\rightarrow (0,1)$ is a well-defined continuous mapping from $\mathbb{T}$ to $(0,1)$, so that $\gamma(x)=(x,y_x):\mathbb{T}\rightarrow\mathbb{T}\times(0,1)$ defines a continuous non-contractible closed curve such that $\psi(\gamma)=c$. We define $\Gamma = \lbrace \gamma(x) : x\in \mathbb{T} \rbrace$ as its image. Given the local symmetry of the solution in the vertical variable and the definition of $y_x$, we see that for all $x\in\mathbb{T}$ there exists some $\lambda	\in \R$ such that $\partial_y\psi(x,y_x) = - \partial_y\varphi_\lambda(x_0,y_0) < 0$ and  $\psi(x, y) > \psi(x,y_{x})=c$, for all $0<y < y_{x}$. Suppose now that the map $Y(x)$ is not continuous. Then, there exists a sequence $x_n\rightarrow x$ such that $y_{x_n} \rightarrow y_*\neq y_x$, up to a subsequence. Due to the local symmetry of solutions, and by the definition of each $y_{x_n}$, there holds
\begin{equation}
\psi(x_n, y) > \psi(x_n,y_{x_n})=c, \quad \text{for all } y\in (0, y_{x_n}).
\end{equation}
Now, by continuity of $\psi$, there holds $\psi(x, y) \geq \psi(x,y_*)=c$, for all $y < y_*$. By definition of $y_x$, we must have $y_*>y_x$, since $(x,y_*)\in \Gamma_c$. However, since $\partial_y\psi(x,y_x) < 0$, we can find some $y\in (y_x,y_*)$ such that $\psi(x,y)<\psi(x,y_x)=\psi(x,y_*)$, a contradiction. Hence, the map $Y(x)$ is continuous and $\Gamma$ is connected and non-contractible.

We now claim that $\Gamma_c = \Gamma$. Indeed, we shall see that $\Gamma$ is both relatively closed and open in $\Gamma_c$. Since $\Gamma_c$ is maximally connected, we then conclude that $\Gamma$ coincides with $\Gamma_c$. Since $Y(x)$ and $\psi$ are continuous, it is straightforward to see that $\Gamma$ is relatively closed in $\Gamma_c$. To see that $\Gamma$ is relatively open in $\Gamma_c$, for all $(x,y_x)\in \Gamma$ we shall find an open set $B$ such that $(x,y_x)\in B\cap \Gamma_c \subseteq \Gamma$. Let then $(\tilde{x},y_{\tilde{x}})\in \Gamma$ and note that, since $\partial_y\psi(\tilde{x},y_{\tilde{x}}) < 0$, by continuity there holds $\partial_y\psi < 0$ in $B_r(\tilde{x},y_{\tilde{x}})$, for some $r>0$. Choose now $\epsilon < r/2$ and, thanks to the continuity of $Y(x)$ at $\tilde{x}$, let $\delta_\epsilon>0$ be such that $|y_x - y_{\tilde{x}}|< \epsilon$ for all $|x-\tilde{x}|<\delta_\epsilon$. Further, let $\delta = \min \lbrace \delta_\epsilon, \epsilon\rbrace$ and define 
\begin{equation}
B = (\tilde{x} - \delta, \tilde{x} + \delta)\times (y_{\tilde{x}} - \epsilon, y_{\tilde{x}} + \epsilon).
\end{equation}
Now, there holds $B\cap \Gamma_c = \lbrace \gamma(x): \tilde{x}-\delta < x < \tilde{x} + \delta\rbrace \subset \Gamma$. Indeed, the inclusion $\supseteq$ is obvious due to the continuity of $Y(x)$. On the other hand, to show the inclusion $\subseteq$ assume towards a contradiction that there exists some $(x,y)\in B\cap \Gamma_c$ such that $y\neq Y(x)$. By definition of $Y(x)$, it must be $y>y_x=Y(x)$. Nevertheless, $\psi(x,y) = \psi(x,y_x)$, and thus there must exists some $y'\in (y_x,y)$ such that $\partial_y\psi(x,y')=0$. However, this contradicts $\partial_y\psi< 0$ in $B\subset B_r(\tilde{x}, y_{\tilde{x}})$. Hence, $\Gamma\subseteq \Gamma_c$ is both relatively open and closed in the connected  set $\Gamma_c$. We then conclude that $\Gamma_c = \Gamma$ and thus $\partial_y\psi < 0$ in $\Gamma$, since each $(x,y_x)$ is such that $\partial_y\psi(x,y_x) = -\partial_y\varphi_\lambda(x_0,y_0) < 0$. In fact, once we show that $Y(x)$ is continuous, we can use the $C^2$ regularity of $\psi$ to show that actually $Y(x)$, and thus $\gamma(x)$, is of class $C^1$, with $Y'(x) = - \frac{\partial_x\psi(x,y_x)}{\partial_y\psi(x,y_x)} = \frac{\partial_x\psi(x,y_x)}{\partial_y\varphi_\lambda(x_0,y_0)}$. 

\subsection*{Step 3} Let us now argue for those $c\in (0,1)$ corresponding to regular-singular streamlines. By density of regular values we see that for all points $p\in D_\lambda$, there holds $\partial_y\varphi_\lambda(p) \geq 0 $. Therefore, we can apply Lemma \ref{lemma:technical} for any regular-singular streamline $\Gamma_c$ in $D_\lambda$ and we conclude the existence of some point $q$ in $\Gamma_c\cap D_\lambda$ such that $\partial_y \varphi_\lambda(q)>0$. Again by density of regular values, there exists an open neighbourhood $B_\delta(q)$ of $q$ for which $\partial_y\varphi_\lambda >0$ and $\varphi_\lambda$ attains regular values, slightly smaller than $c$ for regular streamlines lying below $\Gamma_c$ and slightly larger than $c$ for regular streamlines lying above $\Gamma_c$. Each of these regular values lie, a priori, on two different maximally connected components of $\lbrace 0 < \varphi_\lambda < 1 \rbrace \cap \lbrace \partial_y \varphi_\lambda > 0 \rbrace$, and they both intersect the open ball $B_\delta(q)$, where $\partial_y\varphi_\lambda > 0$. Hence, these regular streamlines lying slightly above and slightly below $\Gamma_c$, together with the open ball $B_\delta(q)$ must belong to the same maximally connected component of $\lbrace 0 < \varphi_\lambda < 1 \rbrace \cap \lbrace \partial_y \varphi_\lambda > 0 \rbrace$, and thus they are reflected to $D$ with respect to the same hyperplane of reflection. As such, $q\in \Gamma_c\cap D_\lambda$ is reflected to $D$ and meets $q'\in \Gamma_c\cap D$, with $\partial_y\psi(q') = \partial_y\varphi_\lambda(q') = - \partial_y\varphi_\lambda(q) < 0$. As before, we slide the point $q$ over all $\Gamma_c\cap D_\lambda$ by varying $\lambda\in \R$, obtaining by local symmetry a closed non-contractible curve $\Gamma\subseteq\Gamma_c\cap D$. Since $\partial_y\psi\leq 0$ in $D$ and $\partial_y\psi < 0$ on $\Gamma$, we conclude that $\Gamma = \Gamma_c\cap D$ and thus $\partial_y\psi <0$ in any regular-singular streamline $\Gamma_c\cap D$. In other words, there can be no regular-singular streamlines, and all streamlines are regular. This shows that the maximal connected component of $\lbrace 0 < \varphi_\lambda < 1 \rbrace \cap \lbrace \partial_y \varphi_\lambda > 0 \rbrace$ is precisely $D_\lambda$, namely the $\lambda$-translated reflection of $D$ with respect to the hyperplane $\lbrace y = 0 \rbrace$. 

\subsection*{Step 4} Since $\partial_y\psi < 0$ in $D$, all stream-lines are given by graphs in the $x$-variable, monotonically foliating $D$ and its reflection. Moreover, $\varphi_\lambda$ is symmetric with respect to the hyperplane $\lbrace y = y_\lambda \rbrace$. We shall see that $y_\lambda = 0$, for all $\lambda\in \R$. To see this, let $y_m>0$ denote the minimum value of the vertical component of the free boundary $\Gamma_1$, and assume that the hyperplane of symmetry is given by $\lbrace y = y_\lambda\rbrace$, for some $y_\lambda <0$. Firstly, $y_\lambda\geq-y_m$, since otherwise we would have $\partial_y\varphi_\lambda(x_0,y_\lambda) = 0$ for some interior point $(x_0,y_\lambda)\in D_\lambda$. Let $\delta>0$ such that $\delta< 1-y_m$ and $\delta< -2y_\lambda$. Then, let $(x_0,-y_m)\in (\Gamma_1)_\lambda$, which denotes the $\lambda$-translated reflection of the streamline $\Gamma_1$ with respect to $\lbrace y = 0 \rbrace$ and observe that
\begin{equation}
1 > c = \varphi_\lambda(x_0,-y_m-\delta) = \varphi_\lambda (x_0,2y_\lambda - (-y_m-\delta)) = \varphi_\lambda(x_0, y_m + 2y_\lambda + \delta) = 1,
\end{equation}
because $y_m+2y_\lambda + \delta < y_m$ and thus $(x_0,y_m+2y_\lambda + \delta)$ falls within the domain where $\psi$ is extended to be 1, thus reaching a contradiction. For $y_\lambda\in (0,y_m)$ we argue similarly, taking $0<\delta < 2y_\lambda$ and $(x_0,y_m)\in \Gamma_1$. Hence,
\begin{equation}
1 > \varphi_\lambda(x_0, y_m + \delta) = \varphi_\lambda (x_0, 2y_\lambda - y_m - \delta) = 1,
\end{equation} 
because $-y_m + 2y_\lambda + \delta > -y_m$  and thus $(x_0, 2y_\lambda - y_m - \delta)$ falls within the region where $\psi$ is equal to 1. Therefore, $y_\lambda = 0$, for all $\lambda\in \R$, which shows that $\varphi_\lambda$ is symmetric with respect to the hyperplane $\lbrace y = 0 \rbrace$. In other words, for all $(x,y)\in D$, there holds
\begin{equation}
\psi(x,y) = \varphi_\lambda(x,y) = \varphi_\lambda(x,-y) = \psi(x-\lambda, y),
\end{equation}
for all $\lambda\in\mathbb{R}$. Hence, we conclude that $\psi = \psi(y)$ alone.
\end{proof}

We finish the section showing that solutions to the overdetermined boundary value problem \eqref{eq:ellitwofree} are one-dimensional, as well.

\begin{proposition}\label{prop:rigidtwofree}
Let $\psi$ be a solution of \eqref{eq:ellitwofree} in $D=\Omega_{\Gamma_0}\setminus\overline{\Omega_{\Gamma_1}}$ such that all streamlines of $\psi$ in $D$ are not singular. Then, $\psi = \psi(y)$.
\end{proposition}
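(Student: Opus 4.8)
The plan is to follow the same strategy as the proof of Proposition \ref{prop:rigidonefree}, the only structural novelty being that now \emph{both} boundaries $\Gamma_0$ and $\Gamma_1$ are interior free boundaries on which $\nabla\psi = 0$, rather than just one. First I would extend $\psi$ to the whole periodic channel $\mathbb{T}\times(0,1)$ through its boundary values: set $\psi \equiv 0$ in the region of $M$ lying above $\Gamma_0$ (between $\Gamma_0$ and the straight top boundary) and $\psi \equiv 1$ in $\Omega_{\Gamma_1}$ (between $\Gamma_1$ and the straight bottom boundary). Since $\nabla\psi = 0$ on both free boundaries, this extension is of class $C^1$ and satisfies $\psi = 0$ on $\mathbb{T}\times\{1\}$, $\psi = 1$ on $\mathbb{T}\times\{0\}$, with $\{0 < \psi < 1\} = D$ open. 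This places us in exactly the geometric configuration of \eqref{eq:extendedellionefree}, but with two constant regions instead of one.

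The next step is to identify the correct weak semilinear equation. Writing $f_0 = -f + g$, I would now take $g$ with downward jumps at \emph{both} endpoints: $g(0) = f(0)$, $g(1) = f(1)$, and $g \equiv 0$ on $(0,1)$. By Lemma \ref{lemma:signf} we have $f(0)\geq 0$ and $f(1)\leq 0$, so $g$ is non-increasing on $[0,1]$, while $-f$ is continuous; these are precisely the hypotheses needed to invoke Theorem 7.2 of \cite{Brock}. To verify that the extended $\psi$ is a genuine weak solution of $-\Delta\psi = f_0(\psi)$ on $\mathbb{T}\times(0,1)$, I would argue as in Step 1 of Proposition \ref{prop:rigidonefree} at each free boundary separately: where $f(i)\neq 0$, Lemma \ref{lemma:streamlineregularity} ensures $\Gamma_i$ is a $C^2$ curve, so integrating by parts against a test function and using the overdetermined condition $\nabla\psi = 0$ on $\Gamma_i$ shows that no singular boundary term appears and the jump in $g$ accounts exactly for the discontinuity of $\Delta\psi$ across $\Gamma_i$; where $f(i) = 0$ the extension is continuous across $\Gamma_i$ and no correction is needed.

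With the weak equation in hand, I would reflect $\psi$ evenly about $\{y = 0\}$ to $\mathbb{T}\times(-1,1)$, forming $\varphi_\lambda$ exactly as in Proposition \ref{prop:rigidonefree}, so that $\varphi_\lambda$ attains its maximum $1$ on the reflection plane and vanishes on $y = \pm 1$, with $\{0 < \varphi_\lambda < 1\} = D \cup D_\lambda$. Brock's theorem then gives local symmetry in $y$, and Steps 2 and 3 of Proposition \ref{prop:rigidonefree}, together with Lemma \ref{lemma:technical}, carry over verbatim: sliding the maximal-height point of each level set as $\lambda$ ranges over $\mathbb{T}$ forces $\partial_y\psi < 0$ throughout $D$, rules out regular-singular streamlines, and identifies $D_\lambda$ as the unique maximal connected component of $\{0 < \varphi_\lambda < 1\}\cap\{\partial_y\varphi_\lambda > 0\}$. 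Finally, the Step 4 argument — using the minimal height of the free boundary $\Gamma_1$ and reflecting a point of $D$ into the constant region $\{\psi \equiv 1\}$ — pins the reflection hyperplane at $\{y = 0\}$, yielding $\psi(x,y) = \psi(x-\lambda,y)$ for all $\lambda$ and hence $\psi = \psi(y)$.

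I expect the main obstacle to be the second paragraph: justifying the weak formulation when $f(0)$ and $f(1)$ are \emph{simultaneously} nonzero, which requires the jump function $g$ to be engineered so that it remains monotone across the whole range $[0,1]$ (this is where the opposite signs $f(0)\geq 0 \geq f(1)$ from Lemma \ref{lemma:signf} are essential) and the integration-by-parts argument to be run independently at two $C^2$ free boundaries. A secondary point requiring care is confirming that, despite the presence of two constant regions (one above $\Gamma_0$ and one below $\Gamma_1$), the local symmetry decomposition produces a single reflection hyperplane governing all of $D$, so that the Step 4 localization argument still applies unchanged.
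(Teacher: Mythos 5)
Your proposal matches the paper's proof: the same extension of $\psi$ by its boundary values to the full channel, the same choice of $f_0=-f+g$ with jumps at both endpoints made non-increasing via Lemma \ref{lemma:signf}, and the same reduction to the machinery of Proposition \ref{prop:rigidonefree} (Brock's local symmetry, the sliding reflections $\varphi_\lambda$, and the Step 4 localization of the reflection hyperplane). The paper itself states that after the setup the argument is identical to the one-free-boundary case, which is exactly what you carry out.
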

\begin{proof}
The first step in the proof is to extend $\psi$ from $D$ to $\mathbb{T}\times(0,1)$. We do so by setting $\psi\equiv 0$ on $\mathbb{T}\times(0,1)\setminus \overline{\Omega_{\Gamma_0}}$ and setting $\psi\equiv 1$ on $\mathbb{T}\times(0,1) \setminus \overline{\mathbb{T}\times(0,1)\setminus \overline{\Omega_{\Gamma_1}}}$. Next, we define
\begin{equation}
f_0(t) = -f(t) + g(t), \quad g(t) = \begin{cases} f(1), & t=1, \\ 0, & 0<t< 1, \\
f(0), & t = 0 \end{cases},
\end{equation}
and since $f(1)\leq 0 \leq f(0)$, we observe that $g(t)$ is non-increasing. Moreover, $\psi$ is a weak solution of 
\begin{equation}
\begin{cases}
-\Delta\psi = f_0(\psi), & \text{in }\mathbb{T}\times(0,1), \\
\psi(x,1) = 0, & x\in\mathbb{T}, \\
\psi(x,0) = 1, & x\in\mathbb{T},
\end{cases}
\end{equation}
while $D=\lbrace 0 < \psi <1 \rbrace$ is an open set. As before, Lemma \ref{lemma:streamlineregularity} ensures $C^2$ regularity of $\Gamma_i$ in case $f(i)\neq 0$. Next, for all $\lambda\in \R$ we define the $\lambda$-translated reflection of $\psi$ on $\mathbb{T}\times[-1,1]$ by
\begin{equation}
\varphi_\lambda(x,y) = \begin{cases}
\psi(x,y), & y\geq 0 \\
\psi(x-\lambda,-y), & y< 0
\end{cases}
\end{equation}
so that $\varphi_\lambda$ satisfies
\begin{equation}
\begin{cases}
-\Delta\varphi_\lambda = f_0(\varphi_\lambda) & \text{ in }\mathbb{T}\times(-1,1) \\
\varphi_\lambda(x,-1) = \varphi_\lambda(x,1) =0, & x\in \mathbb{T}
\end{cases}
\end{equation}
and $\lbrace 0 < \psi < 1 \rbrace =D \cup D_{\lambda}$, where $D_\lambda\subset \mathbb{T}\times (-1,0)$ denotes the $\lambda$-translated reflection of $D$ with respect to the hyperplane $\lbrace y = 0 \rbrace$. From here, the proof of the proposition is identical to the proof of Proposition \ref{prop:rigidonefree}, we omit the details.
\end{proof}

\section{Over-determined elliptic equations in the annulus}\label{sec:rigidoverdetannulus}
In this section we assume $M=B_R\setminus \overline{B_r}$ is an annulus, for some $0 < r < R$. There is a rich literature on solutions satisfying over-determined elliptic boundary problems in annular domains, and so we shall draw from them, with the necessary modifications to our purposes. To deal with the equations in the annulus we first recall, see \cite{Brock}, the concept of local radial symmetry.
\begin{defi}
Assume $\psi\geq 0$ admits the decomposition
\begin{equation}
\lbrace 0 < \psi < \sup \psi \rbrace = \bigcup_{k\geq 1}U_k \bigcup S,
\end{equation} 
where $\nabla\psi = 0$ in $S$ and the $U_k$'s are pairwise disjoint annuli $B_{R_k}(\x_k)\setminus \overline{B_{r_k}(\x_k)}$, with $0\leq r_k < R_k$ and $\x_k \in \lbrace 0 < \psi < \sup \psi \rbrace$. Assume further that $\psi$ is radially symmetric in $U_k$, namely
\begin{equation}
\psi(\x) = \psi(|\x-\x_k|), \quad \frac{\partial\psi}{\partial\rho}(\x) < 0,
\end{equation}
for all $\x\in U_k$, where $\rho=|\x-\x_k|$, and $\psi(\x) \geq \psi|_{\partial B_{r_k}(\x_k)}$, for all $k\geq 1$. Then, we say that \emph{$\psi$ is locally radially symmetric}. 
\end{defi}

In this section we show that solutions to \eqref{eq:elliM}, \eqref{eq:ellionefree} and \eqref{eq:ellitwofree} are locally radially symmetric. Exploiting the structure of the streamlines in the fluid sub-domains, we are able to prove that the set $S$ is empty and the solutions are \textit{globally} radially symmetric. We begin by showing that solutions to the semilinear elliptic equation \eqref{eq:elliM} for the annular domain $M=B_R\setminus \overline{B_r}$ must be radial. Now, the problem does not, a priori, have over-determined boundary conditions, but the boundaries are already circular sets. The following result exploits this fact to conclude solutions are radial.

\begin{proposition}\label{prop:rigidityannulus}
Let $M=B_R\setminus \overline{B_r}$ and $\psi$ be a solution of \eqref{eq:elliM}. Then, $\psi=\psi(r)$.
\end{proposition}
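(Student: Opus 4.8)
The plan is to establish radial symmetry of solutions to \eqref{eq:elliM} on the annulus $M=B_R\setminus\overline{B_r}$ by adapting the moving plane method to this doubly-connected setting. Since the boundary consists of two concentric circles $\partial B_R$ and $\partial B_r$ on which $\psi$ takes the constant values $0$ and $1$ respectively, the geometry already carries full rotational symmetry of the domain, and the task is to transfer this symmetry to the solution. I would work in each fixed direction $e_\theta$ and slide hyperplanes orthogonal to it, but because the domain is an annulus rather than a ball, a naive moving plane argument must be modified to respect the hole $B_r$.

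First I would fix a direction, say $e_1$, and for $\mu\in\R$ consider the reflection across the hyperplane $\{x_1=\mu\}$, comparing $\psi(\x)$ with $\psi(\x^\mu)$ where $\x^\mu$ is the reflected point. The comparison function $w_\mu=\psi(\x^\mu)-\psi(\x)$ satisfies a linear elliptic equation $\Delta w_\mu + c(\x)w_\mu=0$ with $c\in L^\infty$ coming from the $C^1$ nonlinearity $f$, so the strong maximum principle and Hopf lemma apply. The delicate point is that the reflected annulus does not sit inside the original one, so I would instead exploit the following structure: by the rotational symmetry of $\partial M$, it suffices to show $\psi$ is symmetric across \emph{every} diameter line through the center, and the standard approach for annuli is to prove that $\psi$ depends only on $\rho=|\x|$ by showing invariance under all rotations. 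The cleanest route is likely to establish, via local radial symmetry from the Continuous Steiner Symmetrization theory of Brock \cite{Brock} (exactly as invoked for the channel), that $\psi$ is locally radially symmetric, and then use the absence of singular streamlines together with the connectedness of $M$ to conclude that the decomposition set $S$ is empty and there is a single annular component $U_1$ centered at the origin.

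Concretely, I would extend and reflect $\psi$ appropriately so that Brock's local symmetry result yields that $\{0<\psi<1\}=M$ decomposes into radially symmetric annuli $U_k$ plus a set $S$ where $\nabla\psi=0$. Since \eqref{eq:elliM} has \emph{no singular streamlines} by hypothesis, $\nabla\psi\neq 0$ throughout $M$, forcing $S=\emptyset$. Then the level sets $\{\psi=c\}$ are the regular closed non-contractible streamlines foliating $M$, and each $U_k$ is a genuine annulus $B_{R_k}(\x_k)\setminus\overline{B_{r_k}(\x_k)}$ on which $\psi$ is radial about its own center $\x_k$. The final step is to argue that all these centers coincide with the center of $M$ and that there is a single component: because $\psi=0$ on the outer circle $\partial B_R$ and $\psi=1$ on $\partial B_r$, both of which are centered at the origin, matching the radial structure across the foliation forces every $\x_k$ to equal the origin, so $\psi=\psi(\rho)$ globally.

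The main obstacle I anticipate is the final gluing step: showing the local radial symmetry is \emph{coherent}, i.e., that the centers $\x_k$ of the radially symmetric pieces all agree and reduce to a single global annulus. Local radial symmetry a priori permits different components to be centered at different points, and ruling this out requires using the rigidity of the boundary circles together with the monotone foliation by non-contractible streamlines. I expect this to parallel the channel argument (Propositions \ref{prop:rigidonefree} and \ref{prop:rigidtwofree}), where the continuity of the map tracking a distinguished point on each streamline, combined with the overdetermined/boundary data, pins down a unique hyperplane (here, a unique center); the annular analogue should force $\x_k\equiv 0$ by a connectedness-and-continuity argument sliding the center as a function of the level value $c$.
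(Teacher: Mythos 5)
Your overall route is the same as the paper's: extend $\psi\equiv 1$ into $B_r$, invoke Brock's local radial symmetry (Corollary 7.3 of \cite{Brock}) to decompose $M=\bigcup_k U_k\cup S$ into radially symmetric annuli plus a set $S$ where $\nabla\psi=0$, and then argue that $S=\emptyset$ and only one annulus survives. However, there is a genuine gap at the step where you dispose of $S$. You write that since \eqref{eq:elliM} has no singular streamlines, ``$\nabla\psi\neq 0$ throughout $M$, forcing $S=\emptyset$.'' This implication is false: the hypothesis rules out \emph{singular} streamlines (gradient vanishing on the entire streamline), but it does not rule out \emph{regular-singular} streamlines, on which $\nabla\psi$ vanishes at some points and not at others. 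Those vanishing points would populate $S$, so $S$ cannot be dismissed by the hypothesis alone. The paper explicitly flags this (``the set $S$ may, a priori, be non-empty, given the possible existence of regular-singular streamlines'') and the substantive content of its proof is precisely the argument that closes this gap: if $\Gamma_c$ is regular-singular, pick $q\in\Gamma_c$ with $\nabla\psi(q)\neq 0$; then $q$ lies in some $U_k=B_{R_k}(\x_k)\setminus\overline{B_{r_k}(\x_k)}$, and the full circle $\Gamma=\lbrace \x : |\x-\x_k|=|q-\x_k|\rbrace$ is contained in $U_k$ with $\psi\equiv c$ and $\partial_\rho\psi<0$ on it; since two distinct streamlines carrying the same value would force an extremum and hence a singular streamline, $\Gamma_c=\Gamma$ is in fact regular. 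Only after this does $S=\emptyset$ follow, and then connectedness of $M$ gives a single annulus $U_1=M$, whose boundary circles are centered at the origin, so the center is the origin and $\psi=\psi(\rho)$.

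Your anticipated ``main obstacle'' (coherence of the centers $\x_k$) is handled in the paper by the observation that non-contractible closed streamlines force the annuli $U_k$ to be nested with centers inside $B_r$, and once $S=\emptyset$ connectedness collapses the union to the single component $M$; so that part is easier than you expect. The real work is the regular-singular case you skipped. Also, in the annular setting no reflection of $\psi$ is needed (that device is specific to the channel); the plain extension $\psi\equiv 1$ on $B_r$ suffices to place the problem in the framework of Brock's theorem on the ball $B_R$.
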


\begin{proof}
We extend $\psi \equiv 1$ in $B_r$ so that now we have that $\psi\in H^1_0(B_R)$ satisfies
\begin{equation}
\begin{cases}
\Delta\psi = f(\psi), \quad  0<\psi<1, & \text{in }M=B_R\setminus B_{r}, \\
\psi \equiv 1, & \text{in }\overline{B_r},
\end{cases}
\end{equation}
for some $f\in C[0,1]$, with $M=\lbrace 0 < \psi < 1 \rbrace$ and $\psi\in C^3(M)$. Hence, we can apply Corollary 7.3 of \cite{Brock} to conclude that $\psi$ is locally symmetric. Hence, 
\begin{equation}
M = \bigcup_{k\geq 1}U_k \bigcup S,
\end{equation} 
where $\nabla\psi = 0$ in $S$ and the $U_k$'s are pairwise disjoint annuli $B_{r_k}(\x_k)\setminus \overline{B_{r_k}(\x_k)}$, with $0\leq r_k < R_k$ and $\x_k \in D$. Moreover, $\psi$ is radially symmetric in $U_k$. Any regular streamline must be entirely contained in only one of these $U_k$'s, since regular streamlines are connected sets with $\nabla\psi\neq 0$ on them and the $U_k$'s are pairwise disjoint. Moreover, since all streamlines are closed and non-contractible loops, these annuli $U_k$ must all be nested and their centres $\x_k$ must lie within $B_r$.

On the other hand, the set $S$ may, a priori, be non-empty, given the possible existence of regular-singular streamlines. However, we shall see this is not the case. Indeed, assume $\Gamma_c$ is a regular-singular streamline and let $q\in\Gamma_c$ be such that $\nabla\psi(q)\neq 0$. Then, $q\in U_k$ for some $k\geq 1$, so that $r_k < |q-\x_k|<R_k$. Therefore, the set $\Gamma := \lbrace \x\in M: |x-\x_k| = |q-\x_k|\rbrace$ is a closed non-contractible circle, contained in $U_k$ and, more importantly, it is such that $\psi(\x) = \psi(q) =c$, for all $\x\in \Gamma$. Therefore, it must be $\Gamma_c=\Gamma$ and $\partial_\rho\psi < 0$ in $\Gamma_c$, thus concluding that there are no regular-singular streamlines. Hence, $S$ is empty and since $M$ is connected, the union is made of only one annuli, $M$ itself. Therefore, $\psi$ is radially symmetric and decreasing in $M$, $u$ is a radial flow.
\end{proof}

We next show that solutions to the semilinear elliptic problem \eqref{eq:ellionefree} with one overdetermined boundary must be circular.

\begin{proposition}\label{prop:rigidannulusonefree}
Let $\psi$ a solution to \eqref{eq:ellionefree} in $D=M\setminus \overline{\Omega_{\Gamma_1}}$ such that all the streamlines of $\psi$ in $D$ are not singular. Then, $\psi = \psi(r)$.
\end{proposition}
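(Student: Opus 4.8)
The plan is to mirror the strategy of Proposition \ref{prop:rigidonefree}, replacing the reflection/sliding step (which there exploited the translational symmetry of the channel) by the local radial symmetry machinery already used in Proposition \ref{prop:rigidityannulus}. First I would extend $\psi$ across the free boundary by setting $\psi \equiv 1$ on $\Omega_{\Gamma_1}$ (which fills in the hole $B_r$), so that the extended function lies in $H^1_0(B_R)$ and $\lbrace 0 < \psi < 1 \rbrace = D$ is an open set. Following the channel case, I would introduce the modified nonlinearity $f_0(t) = -f(t) + g(t)$, where $g(t) = f(1)$ at $t = 1$ and $g(t) = 0$ for $t < 1$; since $f(1) \leq 0$ by Lemma \ref{lemma:signf}, the jump term $g$ is non-increasing. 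The overdetermined condition $\nabla\psi = 0$ on $\Gamma_1$, together with the $C^2$ regularity of $\Gamma_1$ furnished by Lemma \ref{lemma:streamlineregularity} when $f(1) \neq 0$ (and a continuous extension when $f(1) = 0$), lets me integrate by parts across $\Gamma_1$ and conclude that the extended $\psi$ weakly solves $-\Delta\psi = f_0(\psi)$ on $B_R$.

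Next I would invoke Corollary 7.3 of \cite{Brock}, exactly as in Proposition \ref{prop:rigidityannulus}, to obtain that $\psi$ is locally radially symmetric: $D = \bigcup_k U_k \cup S$, with $\nabla\psi = 0$ on $S$ and $\psi$ radial and radially decreasing ($\partial_\rho\psi < 0$) on each annular component $U_k = B_{R_k}(\x_k) \setminus \overline{B_{r_k}(\x_k)}$. Because every regular streamline is connected with non-vanishing gradient, it must lie entirely inside a single $U_k$, and the non-contractibility of the streamlines forces the centers $\x_k$ to lie within $B_r$ and the $U_k$ to be nested.

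The crux is then to eliminate the exceptional set $S$ and collapse to a single annulus. I would reuse the argument of Proposition \ref{prop:rigidityannulus}: if $\Gamma_c$ were regular-singular with a regular point $q \in U_k$, then the full circle $\lbrace |\x - \x_k| = |q - \x_k| \rbrace$ lies in $U_k$ and carries the value $c$, hence it must coincide with $\Gamma_c$; but then $\Gamma_c$ is everywhere regular, a contradiction. Thus there are no regular-singular streamlines, $\nabla\psi \neq 0$ throughout $D$, and $S = \emptyset$. Since the $U_k$ are pairwise disjoint open sets whose union is the connected set $D$, exactly one of them survives, $U = D$, with a single center $\x$.

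Finally, I would pin the center at the origin using the outer boundary. As $\psi$ is radial about $\x$ on $D$ with $\psi = 0$ on $\partial B_R$, the level set $\lbrace \psi = 0 \rbrace \cap \overline{D}$ is simultaneously the circle $\partial B_R$ (centered at the origin) and a circle centered at $\x$; two circles coincide as point sets only if they share their center, so $\x = 0$. Therefore $\psi = \psi(r)$, and the free boundary $\Gamma_1 = \lbrace \psi = 1 \rbrace$ is automatically a circle centered at the origin, so $u$ is a circular flow. I expect the main obstacle to be the careful justification of the weak formulation across $\Gamma_1$ and the verification that Brock's local-symmetry hypotheses genuinely apply to the extended problem with its discontinuous nonlinearity; once local radial symmetry is secured, the streamline-topology arguments are essentially those already deployed in Proposition \ref{prop:rigidityannulus}.
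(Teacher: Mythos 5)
Your proposal is correct and follows essentially the same route as the paper: extend $\psi\equiv 1$ across the free boundary, pass to the modified nonlinearity $f_0=-f+g$ with $g$ non-increasing, apply Brock's local radial symmetry, and then rule out regular-singular streamlines exactly as in Proposition \ref{prop:rigidityannulus} to conclude $S=\emptyset$ and global radial symmetry. The only small point is that, because of the discontinuous jump term $g$, the relevant reference is Theorem 7.2 of \cite{Brock} (continuous part plus non-increasing part) rather than Corollary 7.3, which is the issue you yourself flag at the end; your added step pinning the center of symmetry at the origin via the outer circle $\partial B_R$ is a correct detail the paper leaves implicit.
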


\begin{proof}
Firstly, we extend $\psi\equiv 1$ in $\Omega_{\Gamma_1}$ and define
\begin{equation}
f_0(t) = -f(t) + g(t), \quad g(t) = \begin{cases}
f(1), & t = 1, \\
0, & t < 1.
\end{cases}
\end{equation}
It is clear to see that $\psi$ now weakly satisfies
\begin{equation}\label{eq:annulusextendedellionefree}
\begin{cases}
-\Delta\psi = f_0(\psi), & \text{in }B_R, \\
\psi = 0, & \text{on }\partial B_R, \\
\psi\geq  0, & \text{in }B_R
\end{cases}
\end{equation}
and $\lbrace 0 < \psi <1 \rbrace = D$ is an open set, with $\psi\in C^1(D)$. Note that since $f(t)$ is continuous and $g(t)$ is non-increasing, we can use Theorem 7.2 of \cite{Brock} to deduce that $\psi$ is locally symmetric. As in the proof of Proposition \ref{prop:rigidityannulus}, we show, thanks to the local radial symmetry, that any regular-singular streamline is in fact regular. Hence the set $S$ is empty and $\psi$ is globally radially symmetric and decreasing, we omit the details.
\end{proof}

We finish the section proving that solutions to the overdetermined boundary value problem \eqref{eq:ellitwofree} must also be one-dimensional and confined in an annular domains.
\begin{proposition}\label{prop:rigidannulustwofree}
Let $\psi$ a solution to \eqref{eq:ellitwofree} in $D=\Omega_{\Gamma_0}\setminus \overline{\Omega_{\Gamma_1}}$ such that all the streamlines of $\psi$ in $D$ are not singular. Then, $D$ is an annulus and $\psi$ is radially symmetric and decreasing in $D$.
\end{proposition}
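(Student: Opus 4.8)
The plan is to mirror the strategy already used for the two preceding annular propositions, namely to extend $\psi$ to a weak solution on the full disk $B_R$ so that Brock's Continuous Steiner Symmetrization machinery applies, and then to exploit the closed non-contractible structure of the streamlines to eliminate the singular set $S$ and upgrade local radial symmetry to genuine global radial symmetry. Concretely, I would first extend $\psi\equiv 0$ on $B_R\setminus\overline{\Omega_{\Gamma_0}}$ and $\psi\equiv 1$ on $\Omega_{\Gamma_1}$, and define the modified nonlinearity $f_0(t)=-f(t)+g(t)$ with $g(1)=f(1)$, $g(0)=f(0)$, and $g(t)=0$ for $0<t<1$. Since Lemma \ref{lemma:signf} gives $f(1)\le 0\le f(0)$, the function $g$ is non-increasing, which is exactly the monotonicity hypothesis needed. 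As in Proposition \ref{prop:rigidannulusonefree}, the overdetermined conditions $\nabla\psi=0$ on both $\Gamma_0$ and $\Gamma_1$ (together with the $C^2$ regularity of these curves from Lemma \ref{lemma:streamlineregularity} whenever $f(i)\ne 0$) guarantee that the extension is a genuine weak solution across the free boundaries, obtained by integrating by parts and using the vanishing gradient there.

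Having set this up, I would invoke the appropriate result from \cite{Brock} (the analogue of Corollary 7.3 / Theorem 7.2 used in the previous two propositions) to conclude that the extended $\psi$ is locally radially symmetric. This yields the decomposition $\lbrace 0<\psi<1\rbrace = \bigcup_{k\ge 1}U_k \cup S$ into pairwise disjoint annuli $U_k=B_{R_k}(\x_k)\setminus\overline{B_{r_k}(\x_k)}$ on which $\psi$ is radially decreasing, plus a set $S$ where $\nabla\psi=0$. The key geometric input, identical to the argument in Proposition \ref{prop:rigidityannulus}, is that each regular streamline is a connected set on which $\nabla\psi\ne 0$, so it lies entirely inside a single $U_k$; since all streamlines are closed non-contractible loops, the annuli $U_k$ must be nested with their centres $\x_k$ inside $B_r$. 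To rule out $S\ne\emptyset$ arising from regular-singular streamlines, I would repeat the argument verbatim: given a regular-singular streamline $\Gamma_c$ with a point $q$ of non-vanishing gradient, that $q$ lies in some $U_k$, whence the full circle $\lbrace |\x-\x_k|=|q-\x_k|\rbrace$ carries the value $c$ and must coincide with $\Gamma_c$, forcing $\partial_\rho\psi<0$ along it and contradicting it being regular-singular.

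The one genuinely new feature here, compared with Propositions \ref{prop:rigidityannulus} and \ref{prop:rigidannulusonefree}, is that the \emph{inner} boundary $\Gamma_0$ is now also a free boundary rather than the fixed circle $\partial B_R$; this is what must be handled to conclude that $D$ itself is an annulus and not merely that $\psi$ is radial on each $U_k$. Once $S$ is shown to be empty and $D$ connected, the union consists of a single annulus, so $D=U_1$ is literally an annulus $B_{R_1}(\x_1)\setminus\overline{B_{r_1}(\x_1)}$ and $\psi$ is radially symmetric and decreasing on it. I expect the main obstacle to lie precisely in verifying that the double free-boundary extension produces an admissible weak solution to which Brock's theorem applies — that is, confirming that the jump conditions at \emph{both} $\Gamma_0$ and $\Gamma_1$ are reconciled by the source term $g$ and the overdetermined Neumann data, and that the resulting $f_0$ satisfies the continuity-plus-monotone-perturbation structure required by \cite{Brock}. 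Since the excerpt explicitly states that the remaining argument is ``identical to the proof of Proposition \ref{prop:rigidannulusonefree},'' I would present the extension and the $S=\emptyset$ argument in full and then defer the repeated symmetrization details, concluding that $D$ is an annulus and $\psi=\psi(|\x-\x_1|)$ is radially decreasing.
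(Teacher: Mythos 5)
Your proposal matches the paper's proof essentially step for step: the same double extension ($\psi\equiv 0$ outside $\Omega_{\Gamma_0}$, $\psi\equiv 1$ on $\Omega_{\Gamma_1}$), the same modified nonlinearity $f_0=-f+g$ with the non-increasing jump term $g$ justified by Lemma \ref{lemma:signf}, the same appeal to Brock's Theorem 7.2 for local radial symmetry, and the same nested-annuli argument from Proposition \ref{prop:rigidityannulus} to show $S=\emptyset$ and conclude that $D$ is a single annulus on which $\psi$ is radially decreasing. (Only a cosmetic slip: $\Gamma_0$ is the \emph{outer} boundary of $D$, not the inner one, but this does not affect the argument.)
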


\begin{proof}
The strategy of the proof is similar to that of Proposition \ref{prop:rigidannulusonefree}. We first extend $\psi\equiv 1$ in $\Omega_{\Gamma_1}$ and $\psi \equiv 0$ in $M\setminus \overline{D}$. Define as well
\begin{equation}
f_0(t) = -f(t) + g(t), \quad g(t) = \begin{cases}
f(1), & t = 1, \\
0, &  0 < t < 1 \\
f(0), & t = 0.
\end{cases}
\end{equation}
so that $\psi$ now weakly satisfies
\begin{equation}
\begin{cases}
-\Delta\psi = f_0(\psi), & \text{in }B_R, \\
\psi = 0, & \text{on }\partial B_R, \\
\psi\geq  0, & \text{in }B_R
\end{cases}
\end{equation}
and $\lbrace 0 < \psi <1 \rbrace = D$ is an open set, with $\psi\in C^1(D)$. Since $f(t)$ is continuous and $g(t)$ is non-increasing, we can again apply Theorem 7.2 of \cite{Brock} to deduce that $\psi$ is locally symmetric. The arguments to conclude that $S$ is empty, $D$ is an annular domain and $\psi$ is radially symmetric and decreasing there are the same to the ones in the proof of Proposition \ref{prop:rigidityannulus}, we omit the details.
\end{proof}

\section{Free Boundary Problems: proof of Corollary \ref{cor:rigidfreeboundary}}\label{sec:rigidfreeboundary}
In this section we generalize Theorem \ref{thm:rigidity} to more general annular domains in either $\R^2$ or $\mathbb{T}\times \R$ in Corollary \ref{cor:rigidfreeboundary}. In order to conclude that stationary solutions must either be radial or shear, since now the boundaries are no longer assumed to be circular, respectively horizontal, and keeping in mind the flexibility results of \cite{CDG21}, we must assume enhanced boundary conditions of the velocity field. 

We consider here the free-boundary Euler equations posed in an open bounded domain $M=\Omega_{\Gamma_0}\setminus \overline{\Omega_{\Gamma_1}}$, where $\Gamma_0$ and $\Gamma_1$ denote two non intersecting closed non-contractible curves either in $\mathbb{T}\times \R$ or in $\R^2$. The equations read
\begin{align}\label{eq:freeboundaryEuler}
u\cdot \nabla u &= -\nabla p\quad\  \text{in} \ M,\\
\nabla \cdot u &=0 \quad\quad \ \ \   \text{in} \ M,\\
 u\cdot \hat{n} &=0 \quad\quad \ \ \   \text{on} \ \partial M=\Gamma_0 \cup \Gamma_1, \\
|u| &=a_i \quad\quad \ \ \   \text{on} \ \Gamma_i,
\end{align}
for some $a_i\geq 0$. Given the general strategy for the proof of Theorem \ref{thm:rigidity}, the proof of Corollary \ref{cor:rigidfreeboundary} is now reduced to establishing rigidity results for the stream-function $\psi$, which now solves a modified version of \eqref{eq:ellitwofree}, namely
\begin{equation}\label{eq:ellitwofreectn}
\begin{cases}
\Delta\psi = f(\psi), \quad  0<\psi<1, & \text{ in }D=\Omega_{\Gamma_0} \setminus \overline{\Omega_{\Gamma_1}}, \\
\psi = 0, \quad |\nabla\psi| = a_0, & \text{ on }\Gamma_0, \\
\psi = 1, \quad |\nabla\psi| = a_1, & \text{ on }\Gamma_1,
\end{cases}
\end{equation}
for some $f\in C[0,1]\cap C^1(0,1)$, with $a_0,a_1\geq 0$ and the further assumption that $D=\Omega_{\Gamma_0} \setminus \overline{\Omega_{\Gamma_1}}$ contains no singular streamlines of $\psi$. In case $M$ is an annular domain in $\R^2$, Corollary \ref{cor:rigidfreeboundary} is a direct consequence of Theorem 1.15 in \cite{WangZhan}. A close inspection of the proof shows that the assumption $\nabla\psi\neq 0$ in $D$ there can be replaced by the assumption of $\psi$ being laminar and $D$ containing no singular stream-lines of $\psi$. As we have previously remarked, see Lemma \ref{lemma:mainellipticeq}, this already ensures the existence of a $C^1(0,1)$ function $f$ such that \eqref{eq:ellitwofreectn} is satisfied. We now focus our discussion for subsets $M$ of the periodic strip.

\subsection{Degenerate free boundary problems in the periodic channel}
We assume $M\subset \mathbb{T}\times \R$ is diffeomorphic to a periodic channel. If $a_0=a_1=0$, we can directly use Proposition \ref{prop:rigidtwofree} to conclude that $u$ is a shear flow. In what follows we assume that the $a_i$'s are not zero simultaneously. Consider first that $a_1=0$ and $a_0>0$. By rotational invariance of the Laplacian, the case $a_1>0$ and $a_0=0$ is treated rotating the domain by $\pi$ degrees. In this subsection we further assume the degenerate case $f(1)=0$, which corresponds to assuming that the vorticity is also zero on the free boundary. We have the following result.
\begin{proposition}
Let $\psi$ be a solution to  
\begin{equation}
\begin{cases}
\Delta\psi = f(\psi), \quad  0<\psi<1, & \text{ in }D=\Omega_{\Gamma_0} \setminus \overline{\Omega_{\Gamma_1}}, \\
\psi = 0, \quad |\nabla\psi| = a_0>0, & \text{ on }\Gamma_0, \\
\psi = 1, \quad |\nabla\psi| = 0, & \text{ on }\Gamma_1,
\end{cases}
\end{equation}
for some $f\in C([0,1])\cap C^1(0,1)$ and assume that $D$ contains no singular streamlines of $\psi$. Then, $\psi = \psi(y)$. 
\end{proposition}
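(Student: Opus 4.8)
The plan is to reduce this degenerate free-boundary problem to the flat-boundary situation of Proposition~\ref{prop:rigidonefree}, by first absorbing the soft boundary $\Gamma_1$ into a solid region and then using the overdetermined datum on $\Gamma_0$ to force $\Gamma_0$ to be horizontal. Since $\nabla\psi=0$ on $\Gamma_1$ and the vorticity value vanishes there ($f(1)=0$), I would first extend $\psi\equiv 1$ into the bounded region $\Omega_{\Gamma_1}$ enclosed by $\Gamma_1$, exactly as in Step~1 of Proposition~\ref{prop:rigidonefree}. Because $\nabla\psi$ is continuous across $\Gamma_1$ (it is $0$ from both sides) and $\Delta 1 = 0 = f(1)$, this extension is a genuine continuous weak solution of $\Delta\psi=f(\psi)$ across $\Gamma_1$; this is precisely where the hypothesis $f(1)=0$ is used. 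Consequently $\{0<\psi<1\}=D$, and the only boundary carrying overdetermined data is now the outer curve $\Gamma_0$, on which $\psi=0$ and $|\nabla\psi|=a_0>0$. This is the configuration of Proposition~\ref{prop:rigidonefree}, except that its flat top $\mathbb{T}\times\{1\}$ is replaced by the a priori curved $\Gamma_0$; the whole difficulty is therefore to show $\Gamma_0$ is flat.

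Next I would run Serrin's moving plane method in the vertical direction. For horizontal planes $T_\mu=\{y=\mu\}$ descending from above the top of $\Gamma_0$, set $\psi_\mu(x,y)=\psi(x,2\mu-y)$ and compare $\psi$ with $\psi_\mu$ on the caps $C_\mu=D\cap\{y>\mu\}$. On $\Gamma_0\cap C_\mu$ one has $\psi=0<\psi_\mu$, while $\psi=\psi_\mu$ on $T_\mu$; since $f\in C^1(0,1)$ the difference $\psi_\mu-\psi$ satisfies a linear elliptic inequality with bounded coefficients, so the maximum principle propagates $\psi\le\psi_\mu$ through the caps (reflected caps eventually map into the filled set $\{\psi=1\}$, where the comparison is trivial). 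The crucial point is that the process cannot terminate at an interior critical plane: at such a plane the Hopf boundary lemma together with the constant datum $|\nabla\psi|=a_0$ on $\Gamma_0$ would force $\psi\equiv\psi_{\mu_0}$, i.e.\ reflection symmetry about $T_{\mu_0}$; but then the reflection of the cap of $\{\psi=0\}$ would lie strictly inside $D$, where $0<\psi<1$, a contradiction. Hence no symmetry plane occurs, the comparison holds throughout the admissible range, and one obtains $\partial_y\psi\le 0$ and, by Hopf, $\partial_y\psi<0$ in $D$. In particular every streamline, and $\Gamma_0$ itself, is a graph $y=h_c(x)$ over $\mathbb{T}$.

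With strict vertical monotonicity in hand I would close the argument by a sliding method in the horizontal direction, in the spirit of Proposition~\ref{prop:rigiditychannel} but adapted to the non-flat boundary. Comparing $\psi(\cdot+t,\cdot)$ with $\psi(\cdot,\cdot)$ and sliding $t$, the monotone graphs $h_c$ allow one to transport the boundary and interior inequalities, while the homogeneous datum $|\nabla\psi|=a_0$ along $\Gamma_0$ ensures the comparison cannot first fail on the free-shaped boundary. This forces $\psi$ to be invariant under all horizontal translations, so that $\psi=\psi(y)$, the curve $\Gamma_0$ is flat, and $u$ is a shear flow. Alternatively, once $\Gamma_0$ is seen to be flat one may simply invoke Proposition~\ref{prop:rigidonefree} directly.

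I expect the main obstacle to be the execution of the moving plane and sliding in this degenerate periodic setting. One must apply the maximum principle and Hopf's lemma across the region where $\nabla\psi$ vanishes identically (the filled set $\{\psi=1\}$) and near $\psi=1$ where $f$ may degenerate, and one must handle the two Serrin stopping alternatives (interior tangency, and orthogonality of $T_{\mu_0}$ to $\Gamma_0$) using only the regularity of $\Gamma_0$ furnished by Lemma~\ref{lemma:streamlineregularity}. The sliding step is equally delicate because $\Gamma_0$ is an unknown graph rather than a straight line, so the comparison domains move with $t$; the strictly positive boundary speed $a_0>0$ is exactly what prevents the sliding from stalling at the boundary.
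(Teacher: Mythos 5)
Your first step (extending $\psi\equiv 1$ into $\Omega_{\Gamma_1}$, using $\nabla\psi=0$ on $\Gamma_1$ and $f(1)=0$) coincides with the paper's. After that, however, you switch to a moving plane plus sliding argument, which is exactly the strategy the paper reserves for the \emph{non-degenerate} cases (Lemma \ref{lemma:graphlikeboundary} and the two-free-boundary sliding proposition, both of which explicitly assume $f(i)\neq 0$ whenever $a_i=0$), and it breaks down precisely in the degenerate situation of this statement. The gap is your claim that $\psi_\mu-\psi$ ``satisfies a linear elliptic inequality with bounded coefficients'': writing $f(\psi_\mu)-f(\psi)=c(\x)(\psi_\mu-\psi)$ requires $f$ to be Lipschitz on the range of values involved, but $f$ is only continuous at $t=1$, so $c$ may blow up near the set $\{\psi=1\}$; and the singular Hopf/Serrin lemmas of \cite{Ruiz}, which rescue the maximum principle when $a_i=0$, require $f(i)\neq 0$ --- unavailable here since $a_1=0$ \emph{and} $f(1)=0$. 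Worse, the strict comparison $w^\mu>0$ can never hold once the cap reaches the plateau $\overline{\Omega_{\Gamma_1}}$: a point of the plateau reflected about a lower horizontal plane lands again in the plateau (which is downward-closed at each abscissa), so $w^\mu\equiv 0$ there and the moving plane stalls at the height $\max_{\Gamma_1}y$ without producing any information about $\Gamma_0$ or about $\partial_y\psi$ below that level. Excluding the plateau from the comparison set does not help, because the stopping analysis then has to handle touching points on $\Gamma_1$, where no Hopf-type lemma applies. A secondary issue: a purely horizontal sliding on the periodic channel has no admissible starting position (the translated domain never separates from the original), so that step as written is also not executable; the paper's sliding always uses oblique directions with $\xi_2>0$.

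For contrast, the paper avoids all of this by reflecting and $\lambda$-translating the extended $\psi$ into a doubled domain $\mathcal{D}$ on which $\varphi_\lambda\in C^2_0(\mathcal{D})$ is a positive strong solution with merely continuous nonlinearity, and then invoking Brock's local symmetry theorem (via Continuous Steiner Symmetrization, applied to the truncations $(\varphi_\lambda-\epsilon)_+$) --- a tool that does not require any Lipschitz control on $f$ or any Hopf lemma at the degenerate boundary. The local symmetry is then upgraded to global symmetry about $\{y=0\}$ exactly as in Proposition \ref{prop:rigidonefree}, and letting $\epsilon\to 0$ gives $\psi=\psi(y)$ on all of $D$. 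If you want to keep a moving-plane flavour, you would need a replacement for the strong maximum principle that survives both the vanishing gradient and the vanishing of $f$ at the level $\psi=1$; as it stands, your argument does not supply one.
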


\begin{proof}
The proof is based on a local symmetry result of Brock in \cite{Brock2}, see also Theorem 1.15 in \cite{WangZhan}. Assume without loss of generality that $D$ embeds in the periodic channel, namely that $D\subset M=\mathbb{T}\times[0,1]$, and denote $M_0 = \mathbb{T}\times\lbrace 0 \rbrace$ the bottom boundary of $M$.  We extend $\psi\equiv 1$ the region in $\mathbb{T}\times[0,1]$ bounded above by $\Gamma_1$ and we denote $D_0=\Omega_{\Gamma_0}\setminus \overline{\Omega_{M_0}}$, the region in $M$ bounded above by $\Gamma_0$. Since $\nabla\psi=0$ on $\Gamma_1$ and $f(1) = 0$, we see that $\psi$ now satisfies
\begin{equation}
\begin{cases}
\Delta\psi = f(\psi), \quad  0<\psi<1, & \text{ in }D_0 , \\
\psi = 0, \quad |\nabla\psi| = a_0>0, & \text{ on }\Gamma_0, \\
\end{cases}
\end{equation}
pointwise, with $f\in C([0,1])$. As in the proof of Proposition \ref{prop:rigidonefree}, let $\lambda\in \R$ and denote $\varphi_\lambda$ the $\lambda$-translated reflection of $\psi$,  that is
\begin{equation}
\varphi_\lambda(x,y) = \begin{cases}
\psi(x,y), & (x,y)\in D_0 \\
\psi(x-\lambda,-y), & (x,-y)\in D_0,
\end{cases}
\end{equation}
which is defined in $\mathcal{D} = D_0\cup \pi_\lambda(D_0)$, where $\pi_\lambda(D_0)$ denotes the $\lambda$-translated in $x$ symmetric reflection of $D$ with respect to $y=0$. Now, $\mathcal{D}$ is connected and $\varphi_\lambda\in C^2_0(\mathcal{D})$ is a strong solution to
\begin{equation}
\begin{cases}
\Delta\varphi_\lambda = f(\varphi_\lambda) & \text{ in }\mathcal{D} \\
\varphi_\lambda >0  & \text{ in }\mathcal{D} \\
\varphi_\lambda = 0, &\text{ on } \partial \mathcal{D}.
\end{cases}
\end{equation}
Moreover, $\lbrace 0 < \varphi_\lambda < 1 \rbrace =\mathcal{D}$. The proof of Theorem 1 in \cite{Brock2}, see also the proof of Proposition 5.3 in \cite{WangZhan}, shows that for all $\epsilon>0$ we have that $(\varphi_\lambda - \epsilon)_+$ is locally symmetric in the $y$ variable, for all $\lambda\in \R$. Hence, arguing as in the proof of Proposition \ref{prop:rigidonefree}, we conclude that $(\varphi_\lambda-\epsilon)_+$ is symmetric with respect to the horizontal axis $\lbrace y = 0 \rbrace$, for all $\lambda\in \R$. Therefore, for  all $(x,y)\in D_\varepsilon = \lbrace \epsilon < \psi < 1 \rbrace $, there holds
\begin{equation}
\psi(x,y) = \varphi_\lambda(x,y) = \varphi_\lambda(x,-y) = \psi(x-\lambda, y),
\end{equation}
for all $\lambda\in\mathbb{R}$. Hence, we conclude that $\psi = \psi(y)$ alone in $D_\epsilon$. Moreover, since $0<\psi < 1$ in $D$, we see that $D_\epsilon$ exhaust $D$ and, as a result, we obtain $\psi = \psi(y)$ in $D$. The proof is finished.
\end{proof}
\subsection{A moving plane argument}
To finish the proof of Corollary \ref{cor:rigidfreeboundary}, we still have to consider two more cases: either $a_1=0$, $f(1)\neq 0$ and $a_0>0$ or $a_1,a_0>0$. These can be treated jointly. To conclude translational invariance of solutions to \eqref{eq:ellitwofreectn}, we will use the sliding method presented in the proof of Proposition \ref{prop:rigiditychannel}. To do that, we need to adapt the sliding method to the boundaries not being horizontal and we first need to show that the boundaries are given by $C^2$ graphs in the periodic variable $x\in\mathbb{T}$. This is the purpose of the following result, which uses a moving plane argument.
\begin{lemma}\label{lemma:graphlikeboundary}
Let $\psi$ be a solution to  
\begin{equation}
\begin{cases}
\Delta\psi = f(\psi), \quad  0<\psi<1, & \text{ in }D=\Omega_{\Gamma_0} \setminus \overline{\Omega_{\Gamma_1}}, \\
\psi = 0, \quad |\nabla\psi| = a_0, & \text{ on }\Gamma_0, \\
\psi = 1, \quad |\nabla\psi| = a_1, & \text{ on }\Gamma_1,
\end{cases}
\end{equation}
for some $f\in C^1([0,1])$. Assume that $D$ contains no singular streamlines of $\psi$ and that $f(i)\neq 0$ if $a_i=0$. Then, $\Gamma_i = \lbrace (x,h_i(x)):\, x\in \mathbb{T} \rbrace$, for some $h_i\in C^2(\mathbb{T})$, for $i=0,1$.
\end{lemma}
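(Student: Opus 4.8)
The plan is to separate the statement into a regularity part and a graph part, dispatch the regularity quickly, and then obtain the graph property from the monotonicity $\partial_y\psi<0$, which I would prove by a moving plane argument in the vertical direction.

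First I would record the $C^2$ regularity of the two boundaries. If $a_i>0$ then $\nabla\psi\neq0$ on $\Gamma_i$, and since $u\in C^2$ gives $\psi\in C^3$, the implicit function theorem applied to the equation $\psi=i$ shows that $\Gamma_i$ is a $C^2$ curve. If instead $a_i=0$, then by hypothesis $f(i)\neq0$, so Lemma \ref{lemma:streamlineregularity} applies and again $\Gamma_i$ is $C^2$. Thus in all cases $\Gamma_0,\Gamma_1$ are $C^2$ closed non-contractible loops, and it remains only to prove that they are graphs over $x\in\mathbb{T}$. I would reduce this to the monotonicity $\partial_y\psi<0$ in $D$, with the orientation (fixed, if necessary, by the reflection $y\mapsto-y$, which preserves both the Laplacian and the problem) in which $\psi$ decreases in $y$, so that the curve carrying value $1$ lies above and the curve carrying value $0$ below. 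Extending $\psi$ by its boundary values beyond $\Gamma_0$ and $\Gamma_1$ as in the proof of Proposition \ref{prop:rigidonefree}, strict monotonicity along each vertical line $\{x=x_0\}$ forbids the line from leaving and re-entering $D$ (it can only take the boundary values $0$ and $1$), so each level set $\{\psi=c\}$, $0\le c\le1$, including $\Gamma_0$ and $\Gamma_1$, meets every vertical line exactly once and is therefore a graph $y=h(x)$; the regularity of $h_i$ is inherited from Step 1.

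The heart is then $\partial_y\psi<0$ in $D$, obtained by moving planes. For $\lambda\in\R$ let $T_\lambda=\{y=\lambda\}$, let $\Sigma_\lambda=\{(x,y)\in D:\ y>\lambda\}$ be the cap above, and set $w_\lambda(x,y)=\psi(x,2\lambda-y)-\psi(x,y)$ on $\Sigma_\lambda$. Since reflection preserves the Laplacian and $\Delta\psi=f(\psi)$, the function $w_\lambda$ solves a linear equation $\Delta w_\lambda=c_\lambda\,w_\lambda$ with $c_\lambda\in L^\infty$ (using $f\in C^1$), with $w_\lambda=0$ on $T_\lambda$ and $w_\lambda\ge0$ on the reflected part of $\Gamma_0$ once the reflected cap lands inside the domain. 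Starting from $\lambda$ just below the top, the cap is thin, so the narrow-domain maximum principle gives $w_\lambda>0$ and, by the Hopf lemma on $T_\lambda$, $\partial_y\psi<0$ there. I would then decrease $\lambda$, maintaining $w_\lambda\ge0$ with the reflected cap inside $D$ (periodicity in $x$ means no lateral boundary intervenes); the overdetermined Neumann datum $|\nabla\psi|=a_0$ on $\Gamma_0$ is exactly what lets Serrin's boundary point lemma carry the plane past points of tangency, and a continuity/supremum argument propagates $\partial_y\psi<0$ down through $D$, with the symmetric argument from below handling $\Gamma_1$.

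The main obstacle I anticipate is twofold. First, the degenerate boundary: when $a_i=0$ the Hopf and Serrin boundary lemmas do not apply directly since $\nabla\psi=0$ there, so I would instead use the $C^2$ regularity from Lemma \ref{lemma:streamlineregularity} together with the strict sign $\Delta\psi=f(i)\neq0$ (cf. Lemma \ref{lemma:signf}) to get the needed one-sided comparison near $\Gamma_i$ and pass to the limit. Second, and more delicate, is ensuring the plane sweeps all of $D$ rather than stalling at an interior horizontal symmetry plane $T_{\lambda_0}$, which would produce a fold and destroy the graph property; ruling this out uses both overdetermined conditions and the laminar structure, namely that $D$ contains no singular streamlines, so an interior critical layer $\{\partial_y\psi=0\}$ along $T_{\lambda_0}$ cannot persist. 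Making these two points rigorous is the real content of the lemma.
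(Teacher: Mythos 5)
Your Step 1 (the $C^2$ regularity of $\Gamma_i$ via the implicit function theorem when $a_i>0$ and via Lemma \ref{lemma:streamlineregularity} when $a_i=0$, $f(i)\neq 0$) matches the paper. The gap is in how you try to get the graph property. You reduce it to the \emph{interior} monotonicity $\partial_y\psi<0$ throughout $D$, to be proved by reflecting the function across horizontal planes in the caps $\Sigma_\lambda=\{(x,y)\in D:\,y>\lambda\}$. In the doubly connected domain $D=\Omega_{\Gamma_0}\setminus\overline{\Omega_{\Gamma_1}}$ this method cannot reach all of $D$: as the plane descends, the reflected cap inevitably falls into the hole $\overline{\Omega_{\Gamma_1}}$ (where the extension is identically $1$) or exits $\Omega_{\Gamma_0}$, so the comparison function is only controlled on a proper subset, and the sweep can at best proceed down to the lowest point of $\Gamma_0$ (and, symmetrically, up to the highest point of $\Gamma_1$). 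The core of $D$ between these two heights is never covered, so $\partial_y\psi<0$ in $D$ does not follow — and indeed the paper never claims it at this stage; interior monotonicity is obtained only later, by Brock's local symmetry or by the oblique sliding argument, after the graph property is in hand. What the lemma actually needs, and what the paper proves, is the weaker boundary statement $\nu_2>0$ on $\Gamma_i$: the paper reflects the \emph{regions} $\Omega_{i,\lambda}^+$ rather than the function on caps of $D$, and shows the critical plane position $\mu_0$ cannot exceed the minimal height $\mu_m$ of $\Gamma_0$, because stalling earlier forces either internal tangency or orthogonality of $\Gamma_i$ with $H_{\mu_0}$, each excluded by the Hopf lemma, respectively Serrin's corner lemma (in their singular-operator versions from \cite{Ruiz}, via the quasi-Lipschitz estimate on $f$, when $a_i=0$ and $f(i)\neq0$). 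Reaching $\mu_m$ directly rules out vertical tangents of $\Gamma_0$, which is exactly the graph property plus the $C^2$ regularity of $h_0$.

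Beyond this structural issue, the two points you flag as "the real content of the lemma" — the degenerate boundary $a_i=0$ and the possibility of the plane stalling — are left as announcements rather than arguments. For the first, "use $\Delta\psi=f(i)\neq0$ to get the needed one-sided comparison and pass to the limit" does not identify the mechanism (the paper needs the bound $|f(\psi(\x_1))-f(\psi(\x_2))|\leq C\,|\psi(\x_1)-\psi(\x_2)|/\min\{d_D(\x_1),d_D(\x_2)\}$ and the Hopf and corner lemmas for singular operators). For the second, note also that even granting interior monotonicity, strict positivity of $\nu_2$ \emph{on} the boundary (needed for $h_i\in C^2$, since $\partial_y\psi<0$ in the interior only yields $\partial_y\psi\leq 0$ on $\Gamma_i$ by continuity) again requires exactly the boundary-point analysis you have deferred. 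So the proposal is a plausible plan with the right toolbox, but it routes through a claim the method cannot deliver and omits the steps that constitute the proof.
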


\begin{proof}
We will show that $\Gamma_0$ is given by a graph function. It will be clear from the arguments that the same conclusion applies for $\Gamma_1$. In what follows, we implement a moving plane argument to show that $\partial_y\psi < 0$ on $\Gamma_0$, since this is enough to then conclude, via the Implicit Function Theorem, that $\Gamma_0 = \lbrace (x,h_0(x)):\, x\in\mathbb{T}\rbrace$, for some $h_0\in C^2(\mathbb{T})$. To that purpose, since both $\Gamma_i$ are bounded closed curves, we embed $D$ in a periodic channel $M$, which we assume to be $M=\mathbb{T}\times(0, 1)$. We then extend $\psi \equiv 1$ in $\Omega_{\Gamma_1}$, the region in $M$ bounded by $\Gamma_1$. For any $\lambda\geq 0$, we define
\begin{equation}
H_\lambda = \lbrace y = \lambda \rbrace, \quad H_\lambda^+ = \lbrace y > \lambda \rbrace, \quad H_\lambda^-  \lbrace y  < \lambda \rbrace,
\end{equation}
and, for any set $A\subset \mathbb{T}\times \R$, we denote
\begin{equation}
A_\lambda^+ = A\cap H_\lambda^+, \quad A_\lambda^- = A\cap H_\lambda^-.
\end{equation}
Next, let $\pi_\lambda$ denote the reflection with respect to the hyperplane $H_\lambda$ and, to ease notation, we denote $\Omega_i:=\Omega_{\Gamma_i}$. Now, let $\mu_m =\min\lbrace y:\, (x,y)\in \Gamma_0 \rbrace$ and define
\begin{equation}
I=\lbrace \mu\geq \mu_m :\, \forall \lambda\geq \mu, \, \pi_\lambda (\overline{\Omega_{i,\lambda}^+}) \subset \Omega_i, \quad (-1)^i\nu_2(p)> 0, \,\forall p\in \Gamma_i\cap H_\lambda,\, i=0,1 \rbrace.
\end{equation}
Here $\nu_2(p)$ denotes the second component of the exterior normal unit vector of $\Omega_0$. The set $I$ denotes all $\mu\in\R$ for which the reflection of $\Omega_{i,\lambda}^+$ fall strictly inside $\Omega_i$. Clearly, $I$ is bounded below and non-empty, since $\mu_1:=\sup\lbrace y: (x,y)\in D \rbrace$ belongs to it. Let $\mu_0:=\inf I$ and assume $\mu_0 > \mu_m$. Then, the infimum takes place when either the reflected set becomes tangential to its boundary, or when the boundary becomes parallel to the vertical vector $\mathbf{e}_2 = (0,1)$. As in the proof of Proposition \ref{prop:rigiditychannel}, for all $\lambda\in I$ we define the comparison function
\begin{equation}
w^\lambda(\x) = \psi(\pi_\lambda(\x)) - \psi(\x),
\end{equation}
for all $\x\in\Sigma_\lambda := \Omega_{0,\lambda}^+\setminus \pi_\lambda(\overline{\Omega_{1,\lambda}^-})$,  the set of all points in $\Omega_{0,\lambda}^+$ such that their reflection does not fall into $\overline{\Omega_{1,\lambda}^-}$. In particular, $\pi_\lambda(\x) \neq \overline{\Omega_1}$, for all $\x\in \Sigma(\lambda)$, since $\pi_{\lambda}(\Omega_{0,\lambda}^+)\cap \Omega_0 = \emptyset$, for all $\x\in \Sigma(\lambda)$. Moreover, we set 
\begin{equation}
J=\lbrace \lambda\in I: w^\lambda(\x) > 0, \quad\forall \x\in \Sigma(\lambda) \rbrace
\end{equation}
and we claim that $J$ is non-empty. Indeed, it can be seen  that $(\mu_1-\epsilon, \mu_1)\subset J$, for some $\epsilon>0$ sufficiently small. This is clear since $-\partial_y\psi =\partial_n\psi = |u|=a_0> 0$ at the maximal point on $\Gamma_0$, and thus by continuity $\partial_y\psi<0$ in $\Omega_{0,\lambda}^+$, for all $\lambda<\mu_1$ sufficiently close to $\mu_1$.

We next define $\lambda_0=\inf J$ and claim that $\lambda_0 \geq \mu_0$. If not, $\lambda_0 > \mu_0$ gives a contradiction as in the proof of Theorem 3 in \cite{Reichel}. One reaches a contradiction after using the maximum principle and the Hopf Lemma on balls in $D$, where $0<\psi<1$ and $f$ is known to be $C^1(0,1)$. We omit the details. We then conclude that $\lambda_0=\mu_0$. Since we assume $\mu_0 > \mu_m$, one of the following is true:
\subsection*{Case 1. Internal tangency} There is some $\x_0\in \Gamma_i^+$ such that $\pi_{\mu_0}(\x_0) \in \Gamma_i$, for $i=0$ or $i=1$. By the overdetermined boundary condition $|u|=a_i>0$ on $\Gamma_i$, we have
\begin{equation}
\partial_\nu w^{\mu_0}(\x_0) = 0.
\end{equation}
Due to the $C^2$ regularity of $\Gamma_i$, which is straightforward if $a_i\neq 0$, and follows from Lemma \ref{lemma:streamlineregularity} if $a_i=0$ since then $f(i)\neq 0$, we can take a sufficiently small ball $B_r(\x_*)$ in $D$ tangent to $\Gamma_i$ at $\x_0$, so that $B_r, \pi_{\mu_0}(B_r)\subset D$. In fact, since $\pi_{\mu_0}$ is an isometry,
\begin{equation}
d_D(\x) \geq r - |\x-\x_*|, \quad d_D(\pi_{\mu_0}(\x))\geq r - |\x-\x_*|,
\end{equation}
for all $\x\in B_r(\x_*)$. Here, $d_D(\x):=\text{dist}(x,\partial D)$. Since $f(i)\neq 0$ whenever $a_i=0$, we can apply Lemma 5.2 of \cite{Ruiz} to obtain
\begin{equation}\label{eq:quasiLipf}
|f(\psi(\x_1)) - f(\psi(\x_2))| \leq \frac{C}{\min\lbrace d_D(\x_1),\, d_D(\x_2)\rbrace }|\psi(\x_1) - \psi(\x_2)|, \quad \forall \x_1,\,\x_2\in D,
\end{equation}
for some $C>0$. Together with \eqref{eq:quasiLipf}, we conclude that 
\begin{equation}
\left| g(\x) \right| = \left| \frac{f(\psi(\pi_{\mu_0}(\x))) - f(\psi(\x))}{\psi(\pi_{\mu_0}(\x)) - \psi(\x)} \right| \leq \frac{C}{r-|\x-\x_*|},
\end{equation}
for all $\x\in B_r(\x_*)$. Then, since $d_{B_r}(\x) = r-|\x-\x_*|$, there holds $g(\x)d_{B_r}(\x)\in L^\infty(B_r(\x_*))$ and since $w^{\mu_0}\geq 0$ in $B_r(\x_*)$, by the Hopf Lemma for singular operators, see \cite[Proposition 4.3]{Ruiz}, we conclude that $w^{\mu_0}\equiv 0$ in $B$, and then by unique continuation in $D$ thus reaching a contradiction with $0 < \psi < 1$ in $D$, $\psi=0$ on $\Gamma_0$ and $\psi = 1$ on $\Gamma_1$.

\subsection*{Case 2. Orthogonality of $\Gamma_i$ and $H_{\mu_0}$} Now, there is some $\x_0\in \Gamma_i$ such that $\nu_2(\x_0) = 0$. Proceeding as in \cite{serrin}, see also \cite{Ruiz}, there holds $\nabla^2 w^{\mu_0}(\x_0)=0$ thanks to the over-determined boundary conditions $|\nabla\psi|=a_i$ in $\Gamma_i$. Hence, as before, we can place a sufficiently small ball $B_r$ in $D$ tangent to $D$ at $\x_0$ and such that $B, \pi_{\mu_0}(B)\subset D$. If $a_i\neq 0$, then $f$ is of class $C^1$ up to the boundary and by the Serrin Corner Lemma, see \cite{serrin}, we conclude that $w^{\mu_0}\equiv 0$ in $B_r$ and then in all $D$. This again gives a contradiction with $0 < \psi < 1$ in $D$, $\psi=0$ on $\Gamma_0$ and $\psi = 1$ on $\Gamma_1$. Instead, if $a_i=0$, then $f(i)\neq 0$ and we can likewise use Serrin Corner Lemma for singular operators, see \cite[Proposition 4.4]{Ruiz} to conclude that $w^{\mu_0}\equiv 0$ in $B_r$, thus leading to a contradiction as well.

\begin{figure}[h!]
\centering
\includegraphics[width=0.8\textwidth]{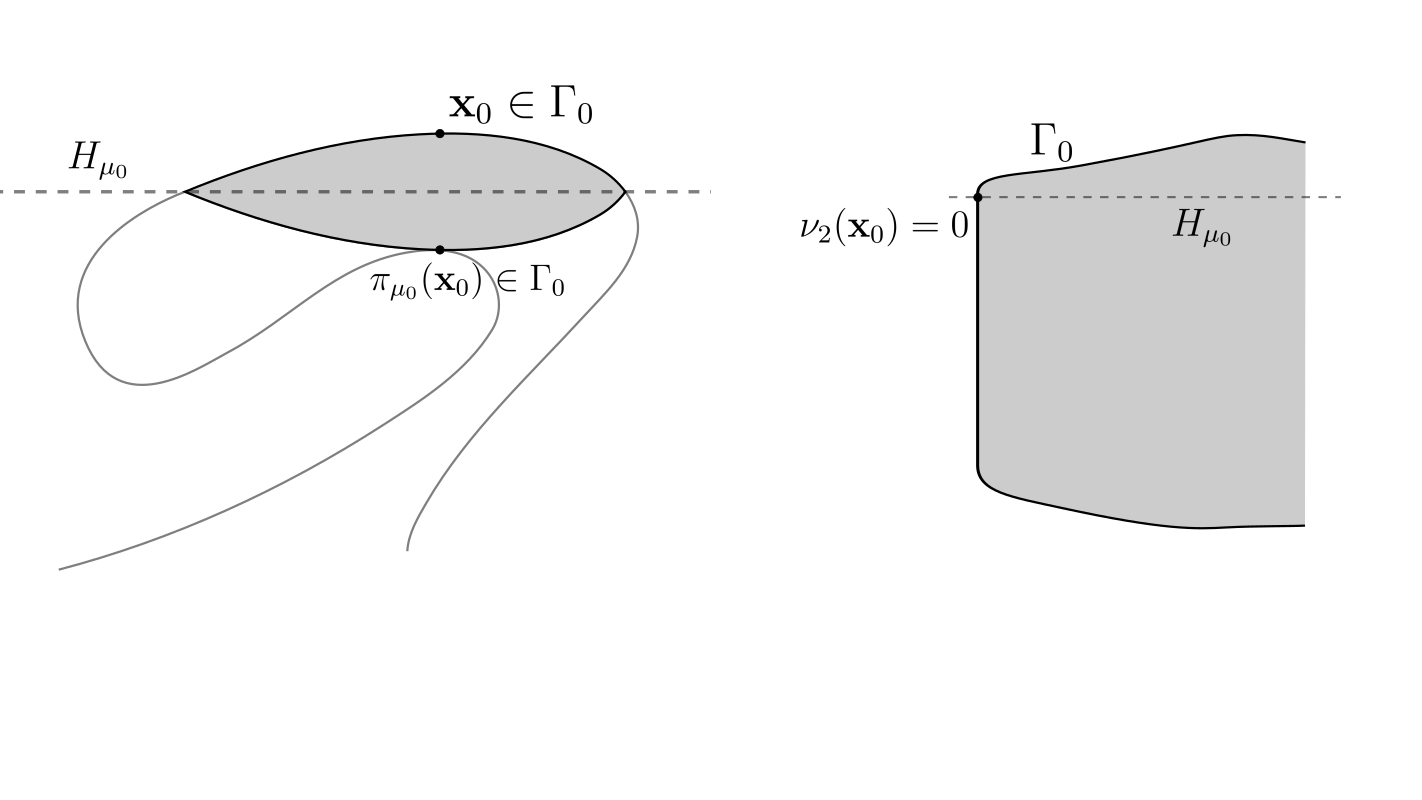}
\caption{On the left an internal tangency is represented at $\x_0\in \Gamma_0$. The figure on the right shows orthogonality of $\Gamma_0$ and $H_{\mu_0}$ at $x_0\in \Gamma_0$. }
\end{figure}

Hence, it must be $\mu_0 = \mu_m$. If $a_0\neq 0$, this shows that $-\partial_y\psi = \nu_2>0$ in $\Gamma_0$, since $\nu = -\nabla\psi$ on $\Gamma_0$. Therefore, the implicit function theorem applied to $\psi(x,y) = 0$ gives the existence of a global $C^2$ function $h_0(x)$ such that $\psi(x,h_0(x))=0$, that is, $\Gamma_0= \lbrace (x, h_0(x):\, x\in \mathbb{T}\rbrace $, where $h_0(x_0)=y_0$, for some $(x_0,y_0)\in \Gamma_0$ and $h_0(x)$ can be implicitly defined by the differential equation
\begin{equation}
\partial_x\psi(x,h_0(x)) + h_0'(x)\partial_y\psi(x,h_0(x))= 0.
\end{equation}  
{In case $a_0=0$, since $f(0)\neq 0$ we appeal to Lemma \ref{lemma:streamlineregularity} to write $\Gamma_0 = \lbrace \gamma_0(s): \, s\in \R\rbrace$, for some regular $C^2$ curve $\gamma_0(s)= (\gamma_0^1(s), \gamma_0^2(s))$. Then, $\hat\tau = \frac{\dot{\gamma}_0}{|\dot{\gamma}_0|}$ denotes the unit tangent vector of $\Gamma_0$ and $\hat n = \frac{\dot{\gamma}_0^\perp}{|\dot{\gamma}_0|}$ its exterior unit normal vector. Now, $\nu_2>0$ on $\Gamma_0$ reads $\dot{\gamma}_0^1 > 0$ and thus the inverse function theorem shows that $\gamma_0$ can be re-parametrized, that is, $\Gamma_0 = \lbrace (x,h_0(x)):\, x\in \mathbb{T}\rbrace$, with $h_0(x)=\gamma_0^2\circ (\gamma_0^1)^{-1}(x)$, rendering a $C^2$ graph.}
To show that $\Gamma_1$ is also graph-like, we just need to consider a moving plane argument starting from below, and moving upwards until the highest point of $\Gamma_1$, we omit the details.
\end{proof}

\subsection{Sliding Method for Two Free boundaries}
Now that we know the free-boundaries are given by $C^2$ graphs in the periodic variable $x\in \mathbb{T}$, we implement a sliding method to conclude that solutions must be invariant by $x$-translations, hence independent of $x$. We recall the fluid domain $D$ is given by
\begin{equation}
D= \lbrace (x,y):\, x\in\mathbb{T}, \, h_1(x)< y < h_0(x) \rbrace
\end{equation}
Here, $\Gamma_1 = \lbrace (x,h_1(x)) \rbrace_{x\in\mathbb{T}}$ and $\Gamma_0 = \lbrace (x,h_0(x)) \rbrace_{x\in\mathbb{T}}$ denote the boundaries of $D$, and we assume at least one of them is not identically constant. Otherwise, Proposition \ref{prop:rigiditychannel} already ensures that solutions must correspond to shear flows. 

\begin{proposition}
Let $\psi$ be a solution to  
\begin{equation}
\begin{cases}
\Delta\psi = f(\psi), \quad  0<\psi<1, & \text{ in }D=\Omega_{\Gamma_0} \setminus \overline{\Omega_{\Gamma_1}}, \\
\psi = 0, \quad |\nabla\psi| = a_0>0, & \text{ on }\Gamma_0= \lbrace (x,h_0(x)) \rbrace_{x\in\mathbb{T}}, \\
\psi = 1, \quad |\nabla\psi| = a_1, & \text{ on }\Gamma_1=\lbrace (x,h_1(x)) \rbrace_{x\in\mathbb{T}},
\end{cases}
\end{equation}
for some $f\in C([0,1])\cap C^1(0,1)$. Assume $f(1)\neq 0$ if $a_1=0$ and that $D$ contains no singular streamlines of $\psi$. Then, $\psi = \psi(y)$. 
\end{proposition}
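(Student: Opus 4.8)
The plan is to reduce the problem to the horizontal-boundary situation of Proposition \ref{prop:rigiditychannel} by extending $\psi$ to the whole periodic channel and then running the very same diagonal sliding argument, the only genuinely new difficulty being that the extension is no longer a classical solution across the boundaries. Concretely, since Lemma \ref{lemma:graphlikeboundary} gives $\Gamma_0=\{y=h_0(x)\}$ and $\Gamma_1=\{y=h_1(x)\}$ with $h_i\in C^2(\mathbb{T})$, I would embed $D$ in $M=\mathbb{T}\times(0,1)$ and set $\psi\equiv 1$ in the region below $\Gamma_1$ and $\psi\equiv 0$ in the region above $\Gamma_0$. The resulting function is continuous on $M$, equals $1$ on $\{y=0\}$ and $0$ on $\{y=1\}$, and solves $\Delta\psi=f(\psi)$ with $f\in C^1(0,1)$ in the open set $\{0<\psi<1\}=D$. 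The role of the hypotheses $a_0>0$ (and $f(1)\neq 0$ when $a_1=0$) is that the overdetermined condition $|\nabla\psi|=a_i$ prescribes the size of the gradient jump across $\Gamma_i$: along $\Gamma_0$ the extension develops a convex corner, so that as a distribution $\Delta\psi$ carries a favourably-signed singular measure there, making the extended $\psi$ a one-sided (sub- or super-) solution in a neighbourhood of $\Gamma_0$. By Lemma \ref{lemma:streamlineregularity} the curve $\Gamma_i$ is $C^2$ precisely when $a_i=0$ forces $f(i)\neq 0$, so the same regularity holds at $\Gamma_1$, while if $a_1=0$ the extension there is genuinely $C^1$, exactly as in Proposition \ref{prop:rigidonefree}.

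With the domain now a genuine channel with horizontal boundaries, I would copy the sliding set-up of Proposition \ref{prop:rigiditychannel}: fix $\xi=(\xi_1,\xi_2)$ with $\xi_2>0$, put $w^\tau(\x)=\psi(\x+\tau\xi)-\psi(\x)$ and $D^\tau=\mathbb{T}\times(0,1-\tau\xi_2)$, and set $\tau_0=\inf\{\tau:\ w^\tau<0 \text{ on } D^\tau\}$. The horizontal boundaries make the boundary signs definite: on $\{y=0\}$ one has $\psi(\x)=1\ge \psi(\x+\tau\xi)$, and on $\{y=1-\tau\xi_2\}$ the translate sits on $\{y=1\}$, so $\psi(\x+\tau\xi)=0\le\psi(\x)$; hence $w^\tau\le 0$ on $\partial D^\tau$, and for $\tau\xi_2$ close to $1$ one has $w^\tau<0$ in $D^\tau$, so the infimum is over a non-empty set. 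This is exactly the feature that pure horizontal or diagonal sliding \emph{inside} $D$ fails to provide, since there the four boundary pieces of the overlap $D\cap(D-\tau\xi)$ carry mixed signs; extending by the boundary constants is what restores a definite sign and lets the one-variable argument of Proposition \ref{prop:rigiditychannel} proceed.

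The heart of the matter is to show $\tau_0=0$. Arguing by contradiction as in Proposition \ref{prop:rigiditychannel}, if $\tau_0>0$ there is a touching point $\x_0$ (interior, or a priori on $\partial D^{\tau_0}$) with $w^{\tau_0}(\x_0)=0$. When $\x_0$ and $\x_0+\tau_0\xi$ both lie in $D$, the function $w^{\tau_0}$ solves a linear elliptic equation with $L^\infty$ coefficient there, and the strong maximum principle followed by unique continuation gives $w^{\tau_0}\equiv 0$, contradicting the strict boundary inequalities. The genuinely new cases are when the touching point sits on or near the corner $\Gamma_0$ or the free boundary $\Gamma_1$, where $f$ is only $C^1(0,1)$ and may be singular at the endpoints. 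Here I would use the same tools as in Lemma \ref{lemma:graphlikeboundary}: the quasi-Lipschitz bound \eqref{eq:quasiLipf} for $f$ together with the one-sided solution property of the extension, and then the Hopf lemma and the Serrin corner lemma for singular operators (\cite[Prop.~4.3, 4.4]{Ruiz}, \cite{serrin}) applied on balls tangent to $\Gamma_i$, to again force $w^{\tau_0}\equiv 0$. I expect this touching-point analysis across the non-smooth extension to be the main obstacle, and it is precisely where $a_0>0$ and the $C^2$ regularity of the $\Gamma_i$ are essential.

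Once $\tau_0=0$ for every admissible $\xi$ with $\xi_2>0$, I would let $\xi_2\to 0$ to conclude $w^\tau\le 0$ in $M$ for the horizontal direction $\xi=(1,0)$ and all $\tau\ge 0$, that is $\psi(x+\tau,y)\le\psi(x,y)$. By periodicity in $x$, applying this with $\tau$ and with $L-\tau$ (where $L$ is the period) yields the reverse inequality, so $\psi(x+\tau,y)=\psi(x,y)$ for all $\tau$. Hence $\psi=\psi(y)$; in particular the graphs $h_0,h_1$ are forced to be constant and $u=\nabla^\perp\psi$ is a shear flow, as claimed.
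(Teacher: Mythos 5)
Your strategy---extend $\psi$ by its boundary values to the whole channel and then run the diagonal sliding argument of Proposition \ref{prop:rigiditychannel} on the extension---does not work, and the obstruction is structural rather than technical. The extended function is identically $1$ on the open region below $\Gamma_1$ and identically $0$ on the open region above $\Gamma_0$. Consequently, for every small $\tau>0$ the set of points $\x$ such that both $\x$ and $\x+\tau\xi$ lie in the same plateau is a non-empty open set on which $w^\tau\equiv 0$; hence your set $\{\tau: w^\tau<0 \text{ on } D^\tau\}$ contains no small $\tau$, and $\tau_0=\inf J$ is strictly positive whenever $\Gamma_1$ is an interior curve. The goal $\tau_0=0$ is therefore unattainable, and the touching point produced at $\tau_0$ will generically sit inside a plateau, where $w^{\tau_0}$ vanishes for trivial reasons, satisfies no elliptic equation, and yields no contradiction: neither the strong maximum principle nor the Hopf/Serrin lemmas (singular or not) have anything to act on there. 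Worse, if $h_0$ is non-constant the extension is genuinely non-monotone along near-horizontal directions: one can choose $\x$ with $y>h_0(x)$ (so $\psi(\x)=0$) whose translate satisfies $h_0(x+\tau\xi_1)>y+\tau\xi_2>h_1(x+\tau\xi_1)$, so that $\x+\tau\xi\in D$ and $w^\tau(\x)=\psi(\x+\tau\xi)>0$. Thus the monotonicity you are trying to prove is simply false for the extended function unless the boundaries are already horizontal---which is the statement to be established.

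The paper's proof circumvents exactly this by never extending $\psi$: the comparison function is studied only on the overlap $D_\xi^\tau\cap D$, and the sliding direction is chosen with slope $\xi_2/\xi_1$ inside the range of $h_0'$ or $h_1'$, which forces the translated boundary graph $\Gamma_i-\tau\xi$ to meet $\Gamma_i$ tangentially at a first positive time $\tau_{h_i}>0$. One then shows $\inf J=\tau_1:=\max\{\tau_{h_0},\tau_{h_1}\}$, and at $\tau=\tau_1$ the overdetermined condition $|\nabla\psi|=a_i$ together with the Hopf (or singular Hopf) lemma gives $w^{\tau_1}\equiv 0$ on $\overline{D_\xi^{\tau_1}\cap D}$. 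The contradiction is then extracted from a periodicity argument you do not have: the tangency set $I^{\tau_1}=\{x:h_0(x)=h_0(x+\tau_1\xi_1)-\tau_1\xi_2\}$ cannot be all of $\mathbb{T}$ (periodicity of $h_0$ forbids $h_0(\cdot+\tau_1\xi_1)-\tau_1\xi_2\equiv h_0$ for $\tau_1\xi_2>0$), and just beyond an endpoint of a maximal component of $I^{\tau_1}$ one exhibits a point of $\overline{D_\xi^{\tau_1}\cap D}$ where $w^{\tau_1}<0$. This contradiction rules out every admissible non-horizontal sliding direction, forcing $h_0'\equiv h_1'\equiv 0$, and only then is Proposition \ref{prop:rigiditychannel} invoked. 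Both ingredients---the restriction of the comparison to $D_\xi^\tau\cap D$ and the final tangency/periodicity contradiction---are missing from your proposal and cannot be supplied by the extension-based scheme.
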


\begin{proof}
The sliding method we implement must adapt to the free boundaries. For this, let $S_i=\Vert h_i' \Vert_{L^\infty}$ and choose a direction $\xi=(\xi_1,\xi_2)$ with $\xi_2>0$, $\xi_1\neq 0$ and such that $\frac{\xi_2}{\xi_1}\in (-S_0,S_0)\cup (-S_1,S_1)$, which is non-empty, by assumption. Next, we define several key quantities. Firstly, 
\begin{equation}
\tau_0:=\inf \lbrace \tau>0 : h_0(x+\mu\xi_1)-\mu\xi_2 \leq h_1(x), \text{ for all } x\in\mathbb{T}, \text{ for all } \mu\geq \tau \rbrace,
\end{equation}
so that $\tau_0>0$ denotes the first time $\tau>0$ for which the translated upper boundary $\Gamma_0-\tau\xi$ enters the fluid domain $D$. This is going to be our 'process starting time'. Similarly, we define
\begin{equation}
\tau_{h_0}:=\inf \lbrace \tau>0 : h_0(x+\mu\xi_1)-\mu\xi_2 < h_0(x), \text{ for all } x\in\mathbb{T}, \text{ for all } \mu\geq \tau \rbrace,
\end{equation}
and 
\begin{equation}
\tau_{h_1}:=\inf \lbrace \tau>0 : h_1(x+\mu\xi_1)-\mu\xi_2 < h_1(x), \text{ for all } x\in\mathbb{T}, \text{ for all } \mu\geq \tau \rbrace,
\end{equation}
they denote the first times $\tau_{h_0}\geq 0$ and $\tau_{h_1}\geq 0$ for which the translated curves $\Gamma_0-\tau\xi$ and $\Gamma_1-\tau\xi$ intersect $\Gamma_0$ and $\Gamma_1$, respectively. We claim that $\tau_{h_i}>0$ whenever $S_i\neq 0$. This is a consequence of the sliding direction having a non-zero angle within the range of the slope of the boundaries. Indeed, assume that for all $\tau>0$ and all $x\in\mathbb{T}$ we have 
\begin{equation}
h_{i}(x+\tau\xi_1) < h_{i}(x) +\tau\xi_2,
\end{equation}
that is
\begin{equation}
\frac{h_{i}(x+\tau\xi_1)- h_{i}(x)}{\tau\xi_1}< \frac{\xi_2}{\xi_1}, \text{ if } \xi_1>0
\end{equation}
and
\begin{equation}
\frac{h_{i}(x+\tau\xi_1)- h_{i}(x)}{\tau\xi_1}> \frac{\xi_2}{\xi_1}, \text{ if } \xi_1<0.
\end{equation}
Taking the limit as $\tau\xi_1\rightarrow 0$, we obtain
\begin{equation}
h'(x)\leq \frac{\xi_2}{\xi_1} \text{ if } \xi_1>0, \quad h'(x)\geq \frac{\xi_2}{\xi_1} \text{ if } \xi_1<0,
\end{equation}
for all $x\in \mathbb{T}$, that is, $\left|\frac{\xi_2}{\xi_1}\right|\geq S_i$, thus reaching a contradiction. Being the first time they intersect, the original and translated graphs are tangent at the intersection point.
We further define $\tau_1 := \max \lbrace \tau_{h_0}, \tau_{h_1} \rbrace>0$. The translated domain is given by
\begin{equation}
D_{\xi}^\tau := \lbrace \x=(x,y): x\in \mathbb{T}, \quad h_1(x+\tau\xi_1)< y + \tau\xi_2 < h_0(x+\tau\xi_1) \rbrace = D-\tau\xi, 
\end{equation}
which is a non-empty open set for all $\tau\in(\tau_1,\tau_1+\delta)$, for some $\delta>0$ small enough. It denotes all points $\x$ such that $\x+\tau\xi\in D$. 

\begin{figure}[h!]
\centering
\includegraphics[width=0.8\textwidth]{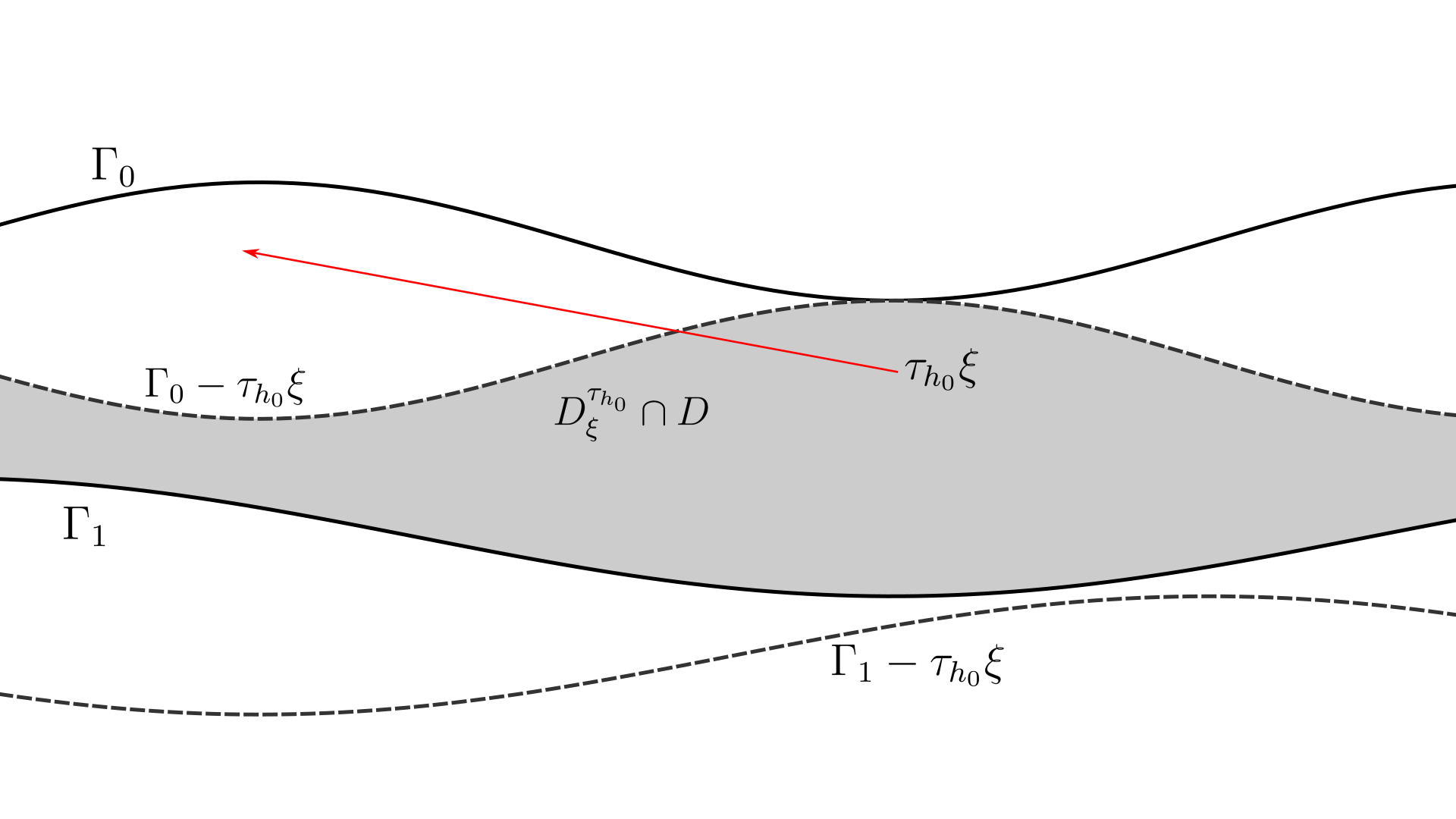}
\caption{The domain $D$ is translated by $-\tau_{h_0}\xi$. By definition, $\Gamma_0$ and $\Gamma_0-\tau_{h_0}\xi$ have non-empty intersection and $D^{\tau_{h_0}}_\xi \cap D $ denotes the region bounded above by $\Gamma_0-\tau_{h_0}\xi$ and below by $\Gamma_1$.}
\end{figure}

Next, let
\begin{equation}
\tau_2 := \sup \lbrace \tau>\tau_1: D^\mu_\xi \cap D \neq \emptyset, \text{ for all } \mu\in (\tau_1,\tau) \rbrace
\end{equation}
and observe that $\tau_2\leq \tau_0$ and $D^{\tau_2}_\xi \cap D=\emptyset$. By definition, $D^\tau_\xi \cap D \neq \emptyset$, for all $\tau\in (\tau_1,\tau_2)=:I$.
%		\begin{figure}[h!]
%		\centering
%			\includegraphics[width=0.5\textwidth]{twofreebdryinitialstep.jpg} 
%					\caption{If the slope is too flat, then there may be several instances where the translated domain enters the original domain, with the translated boundaries not intersecting the original counterparts.} 
%	\end{figure}
Now, let
\begin{equation}
J=\lbrace \tau\in I : w^\tau(\x):= \psi(\x+\tau\xi) - \psi(\x) < 0, \quad\forall\x\in \overline{D_{\xi}^\tau\cap D} \rbrace 
\end{equation}
Firstly, $J$ is nonempty. This is the so-called initial step in the comparison principle within the sliding method, see \cite[Theorem 3]{Reichel}. To see this, for some $\varepsilon>0$ small, consider the interval $(\tau_2-\varepsilon, \tau_2)$, in which $D_\xi^\tau\cap D\neq \emptyset$. By uniform continuity, there exists $\delta>0$ such that $1\geq\psi(\x)> 9/10$, if $\text{dist}(\x,\Gamma_1)<\delta$ and $0\leq\psi(\x)<1/10$, if $\text{dist}(\x,\Gamma_0-\tau\xi)<\delta$. We claim there exists $\varepsilon>0$ sufficiently small such that $\text{dist}(\x,\Gamma_1)<\delta$ and $\text{dist}(\x+\tau\xi,\Gamma_0)<\delta$, for all $\x\in D^\tau_\xi\cap D$, for all $\tau\in (\tau_2-\varepsilon, \tau_2)$. 

If not, we can find a sequence of $\tau_n\nearrow \tau_2$ and $\x_n\in D_\xi^{\tau_n}\cap D$ such that either $\text{dist}(\x_n,\Gamma_1)\geq\delta$ or $\text{dist}(\x_n+\tau\xi,\Gamma_0)\geq\delta$. We assume the former takes place. Up to a subsequence, $\x_n\rightarrow\x_0=(x_0,y_0) \in \overline{D_\xi^{\tau_2}\cap D}$, we shall see that $D_\xi^{\tau_2}\cap D$ is non-empty, thus contradicting the definition of $\tau_2$. Indeed, since $\x_0\in \overline{D}$, and $\text{dist}(\x_n,\Gamma_1)\geq\delta$, there holds $\text{dist}(\x_0,\Gamma_1)\geq\delta/2$ and thus $h_1(x_0) < y_0 - \epsilon$, for all $\epsilon >0$ sufficiently small. On the other hand, we have that $y_0\leq h_0(x_0)$ and $y_0 + \tau_2\xi_2 \leq h_0(x_0+\tau_2\xi_1)$, so that $y_0-\epsilon < h_0(x_0)$ and $y_0 - \epsilon < h_0(x_0 + \tau_2\xi_1) -\tau_1\xi_2$, for $\epsilon>0$. Hence, $(x_0,y_0-\epsilon)\in D_\xi^{\tau_2}\cap D$, it is non-empty. Therefore, for some $\varepsilon>0$ sufficiently small there holds $\text{dist}(\x,\Gamma_1)<\delta$ and $\text{dist}(\x+\tau\xi,\Gamma_0)<\delta$, for all $\x\in D^\tau_\xi\cap D$, for all $\tau\in (\tau_2-\varepsilon, \tau_2)$ and we conclude that
\begin{equation}
w^\tau(\x) = \psi(\x+\tau\xi) - \psi(\x) \leq 1/10 - 9/10  = -4/5<0.
\end{equation}
so that $w^\tau< 0$ in $\overline{D^\tau_\xi\cap D}$, for all $\tau\in (\tau_2-\varepsilon, \tau_2)$.

We now claim that $\tau_1=\inf J$. Assume otherwise and let $\alpha:=\inf J >\tau_1$. By definition of infimum, $w^\alpha \leq 0$ in $\overline{D_\xi^\alpha \cap D}$ and there must be some ${\x_0}\in \overline{D_\xi^\alpha \cap D}$ such that $w^\alpha({\x_0}) = 0$. Moreover, due to the boundary conditions, we can show that $w^\alpha(\x)<0$, for all ${\x}\in \partial (D_{\xi}^\alpha\cap D)$. Indeed, if $\x=(x,y)\in \partial (D_{\xi}^\alpha\cap D)$, and since $\alpha>\tau_1$, we must have either
\begin{enumerate}
\item $\x\in\Gamma_1$, $\x+\alpha\xi\not\in\Gamma_1$ and $y+\alpha\xi_2 \leq h_0(x+\alpha\xi_1)$, or 
\item $\x+\alpha\xi\in\Gamma_0$, $\x\not\in\Gamma_0$ and $y\geq h_1(x)$.
\end{enumerate}
In both cases, since $\psi = i$ on $\Gamma_i$ and $D=\lbrace 0 < \psi < 1\rbrace$, there holds $w^\tau(\x) < 0$. Here it is crucial the fact that $\alpha>\tau_1$, which ensures that $\Gamma_0 -\alpha\xi$ and $\Gamma_1 - \alpha\xi$  do not intersect with $\Gamma_0$ and $\Gamma_1$, respectively. We thus conclude that ${\x_0}$ is an interior point of $D_{\xi}^\alpha\cap D$. We can then find a ball $B_r({\x}_0)\subset D$ such that $B_r({\x}_0+\alpha\xi) \subset D$ as well, for some $r>0$ sufficiently small. Since $w^\alpha(\x)\leq 0$ in $B_r(\x_0)$, $w^\alpha(\x_0)= 0$ and $f$ is of class $C^1$ in $B_r(\x_0)$, by the maximum principle we conclude that $w^\alpha\equiv 0$ in $B_r(\x_0)$. By unique continuation, we extend $w^\alpha\equiv 0$ in $D_{\xi}^\alpha\cap D$ and, by continuity, in $\overline{D_{\xi}^\alpha\cap D}$. This contradicts the fact that $w^\alpha<0$ on $\partial (D_{\xi}^\alpha\cap D)$, resulting in $\alpha=\tau_1$.

Once $\inf J = \tau_1$ is established, we likewise have that $w^{\tau_1}\leq 0$ in $\overline{D_{\xi}^{\tau_1}\cap D}$. In fact, by definition of $\tau_1$, there is some $\x_0=(x_0,y_0)\in \partial (D_{\xi}^{\tau_1}\cap D)$ such that either $\x_0\in\Gamma_0\cap(\Gamma_0-\tau_1\xi)$ or $\x_0\in\Gamma_1\cap (\Gamma_1-\tau_1\xi)$. We assume the former occurs, namely that $\tau_1=\tau_{h_0}$. Now, since $\tau_1=\inf I$, the curves $\Gamma_0$ and $\Gamma_0-\tau_1\xi$ intersect tangentially at $\x_0=(x_0,y_0)$, namely $h_0'(x_0) = h_0'(x_0+ \tau_0\xi_1)$. By the overdetermined boundary condition $\partial_\nu\psi=a_0$ on $\Gamma_0$, there holds
\begin{equation}
\partial_{\nu}w^{\tau_1}(\x_0) = 0,
\end{equation}
where $\nu$ denotes the outer normal unit vector to $\Gamma_0$ at $\x_0$. We now proceed as in the proof of Lemma \ref{lemma:graphlikeboundary}. We can place a small ball $B_r(\x_1)$ on $D_{\xi}^{\tau_1}\cap D$ tangent to $\Gamma_0$ at $\x_0$ such that $B_r+\tau_1\xi$ is again tangent to $\Gamma_0$ at $\x+\tau_1\xi$. If $a_0\neq 0$, then $f$ is of class $C^1$ there and the Hopf Lemma ensures that $w^{\tau_1}\equiv 0$ in $B_r(\x_1)$. If $a_0=0$, we then have $f(0)\neq 0$ and we apply the Hopf Lemma for singular operators as in the proof of Lemma \ref{lemma:graphlikeboundary} to obtain $w^{\tau_1}\equiv 0$ in $B_r(\x_1)$ as well, we omit the details. 

We extend $w^{\tau_1}\equiv 0$ first to $D_{\xi}^{\tau_1}\cap D$ and then to $\overline{ D_{\xi}^{\tau_1}\cap D}$. However, we can find some $\x\in\overline{ D_{\xi}^{\tau_1}\cap D}$ with $w^{\tau_1}(\x)< 0$. Indeed, let $\x_0=(x_0,y_0)\in \Gamma_0 \cap (\Gamma_0 - \tau_1\xi)$, that is, $h(x_0) = y_0 = h(x_0+\tau_1\xi_1) - \tau_1\xi_2$, let 
\begin{equation}
I^{\tau_1} := \lbrace x\in \mathbb{T}: h_0(x) = h_0(x+\tau_1\xi_1) - \tau_1\xi_2 \rbrace
\end{equation}
and let $I_{x_0}\subset I^{\tau_1}$ denote the maximal connected component of $I^{\tau_1}$ that contains $x_0$, we write $I_{x_0} = [x_{l},x_r]$, with $x_l \leq x_0 \leq x_r$. By periodicity, $\mathbb{T}\setminus I_{x_0}\neq \emptyset$, and so there exists some $\delta_0>0$ such that $(x_r,x_r+\delta)\cap I^{\tau_1}= \emptyset$, that is,
\begin{equation}
h(x_r + \delta+\tau_1\xi_1) - \tau_1\xi_2 < h(x_r+\delta),
\end{equation}
for all $0<\delta<\delta_0$. On the other hand, we have that 
\begin{equation}
h_0(x_r + \delta + \tau_1\xi_1) - \tau_1\xi_2 \geq h_0(x_r+\tau_1\xi_1) -\tau_1\xi_2 - \Vert h_0' \Vert_{L^\infty} \delta_0 = h_0(x_r) - \Vert h' \Vert_{L^\infty}\delta_0
\end{equation}
while
\begin{equation}
h_1(x_r+\delta) \leq h_1(x_r) + \Vert h_1' \Vert_{L^\infty} \delta_0,
\end{equation}
so that 
\begin{equation}
h_0(x_r + \delta + \tau_1\xi_1) - \tau_1\xi_2 > h_1(x_r+\delta)
\end{equation}
holds as long as
\begin{equation}
\delta_0 < \frac{h_0(x_r) - h_1(x_r)}{\Vert h_0' \Vert_{L^\infty} + \Vert h_1' \Vert_{L^\infty}}.
\end{equation}
We remark here that, by assumptions, at least one of the $h_i$ has $\Vert h_i'\Vert_{L^\infty}>0$. Thus, for $\delta_0>0$ small enough and $0<\delta<\delta_0$, we let $\x=(x,y)$ with $x=x_r+\delta$ and $y=h_0(x+\tau_1\xi_1)-\tau_1\xi_2$. It is such that $\x\in\overline{D_\xi^{\tau_1}\cap D}$ because $y\in (h_1(x),h_0(x))$ and
\begin{equation}
w^{\tau_1}(\x) = \psi(\x+\tau_1\xi) - \psi({\x_1}) =  - \psi(\x) < 0,
\end{equation}
because $\x\in \overline{D}\setminus\Gamma_0$ and $\x\in\Gamma_0-\tau_1\xi$. This contradicts $w^{\tau_1}\equiv 0$ in $\overline{D_\xi^{\tau_1}\cap D}$.

To summarize, we reach a contradiction whenever we choose sliding directions whose non-zero angle falls within the range of $h_0'$ and $h_1'$. Therefore, it must be that  $h_i'\equiv 0$, for $i=0,1$, thus obtaining completely horizontal fluid boundaries. The proof is completed once we appeal to Proposition \ref{prop:rigiditychannel}.
\end{proof}

\section{Allowing for open sets of zero velocity}\label{zeroregion}
The assumptions on the level lines of the stream-function $\psi$ can be relaxed a bit. For instance, albeit with some simple modifications that we describe below, the proof adapts to the following assumptions.

For any $c$ in the range of $\psi$, we denote $A_c$ to be a maximally connected component of $\psi^{-1}(c)$. We say that $A_c$ is 
\begin{itemize}
\item a \emph{regular level set} if $\nabla\psi \neq 0$ on $A_c$.
\item a \emph{regular-singular level set} if there exists $p,q\in A_c$ such that $\nabla\psi(p) = 0$ and $\nabla \psi(q) \neq 0$.
\item a \emph{singular level set} if $\nabla\psi(p)=0$, for all $p\in A_c$.
\end{itemize}
By the implicit function theorem, a regular level set $A_c$ can be locally parametrized by a $C^2$ curve, and the same holds for points in regular-singular level sets that have non-trivial gradient. We next make some basic assumptions on the geometry of these level sets. We shall assume that any $A_c$ is given by the closed set $\overline{\Omega_{\Gamma_{c,\mathrm{out}}}}\setminus \Omega_{\Gamma_{c,\mathrm{in}}}$, where $\Gamma_{c,\mathrm{out}}$ and $\Gamma_{c,\mathrm{in}}$ are closed non-contractible curves. While we do not assume them to be disjoint, we instead require that the open bounded domains they define through the Jordan Curve Theorem satisfy the inclusion relation ${\Omega_{\Gamma_{c,\mathrm{out}}}}\supseteq \Omega_{\Gamma_{c,\mathrm{in}}}$, so that $A_c$ is non-empty and $\partial A_c= \Gamma_{c,\mathrm{out}}\cup \Gamma_{c,\mathrm{in}}$, possibly with $\Gamma_{c,\mathrm{out}}\cap \Gamma_{c,\mathrm{in}}\neq \emptyset$. In fact, if $\Gamma_{c,\mathrm{out}}= \Gamma_{c,\mathrm{in}} =:\Gamma_c$, then $A_c=\Gamma_c$, the level set is a closed non-contractible curve. Since $\nabla\psi\neq 0$ on regular level sets $A_c$, there holds $A_c=\Gamma_c$.

\begin{theorem}\label{thm:levelsetrigidity}
Let $M$ be the straight periodic channel  or the circular annulus. Let $u\in C^2(M)$ be a stationary solution of the Euler equations in $M$.  Suppose that all maximally connected level sets of the stream-function of $u$ are topologically equivalent to non-contractible loops.  Moreover assume  each connected component of the boundary satisfies either that they 
\begin{itemize}
\item are maximally connected subsets of their respective level sets, or
\item  belong to a singular level set, e.g. they have zero velocity.
\end{itemize}
 Then, $u$ is a shear flow or a circular flow respectively.
\end{theorem}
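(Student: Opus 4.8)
The plan is to reduce Theorem~\ref{thm:levelsetrigidity} to the machinery already assembled for Theorem~\ref{thm:rigidity}, treating each \emph{thick} singular level set $A_c$ (an open region of zero velocity, annular in shape) as a ``fat'' free boundary. The first task is to re-run the structural results of \S\ref{sec:propertystream} at the level of maximally connected level sets $A_c$ rather than individual streamlines. Lemma~\ref{lemma:streamlines} adapts with one caveat: interior points of a thick singular set are genuinely isolated from regular level sets, but its boundary curves $\Gamma_{c,\mathrm{out}},\Gamma_{c,\mathrm{in}}$ are \emph{not}, since on the side away from $A_c$ the non-constant foliation accumulates onto them (the same positive-area flow-compression obstruction used in Lemma~\ref{lemma:streamlines} rules out regular-singular curves accumulating the wrong way). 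This is all that is required for the fluid-subdomain construction of Proposition~\ref{prop:fluiddomain} to go through: starting from a regular level set $\Gamma_{c_0}$, the extremal values $c_\pm$ are attained on level sets that, if interior to $M$, must be singular — either a singular curve or the boundary curve of a thick singular set — and the resulting $D$ contains no singular level set in its interior.

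The key new feature is that when a boundary component of such a $D$ is the boundary of a thick singular set $A_c$, one has there not only $\nabla\psi=0$ but also $f(c)=0$: since $\psi\equiv c$ on the positive-area set $A_c$ we get $\Delta\psi=0$ in its interior, and by continuity of $\omega=\Delta\psi$ this persists on $\partial A_c$, while $f(c)=\Delta\psi$ by definition. Thus each fluid sub-domain still satisfies one of the three problems \eqref{eq:elliM}, \eqref{eq:ellionefree}, \eqref{eq:ellitwofree} furnished by Lemma~\ref{lemma:mainellipticeq}, with every \emph{thick} free boundary landing in the degenerate regime $f(i)=0$ (consistent with Lemma~\ref{lemma:signf}, which gives equality). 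This is precisely the case handled most cleanly in Propositions~\ref{prop:rigidonefree} and \ref{prop:rigidtwofree} (channel) and \ref{prop:rigidannulusonefree}, \ref{prop:rigidannulustwofree} (annulus): when $f(i)=0$ the extension of $\psi$ by its constant boundary value across $\Gamma_i$ is a \emph{continuous} solution of the reflected semilinear equation, so no $C^2$ regularity of the free boundary is needed and the Continuous Steiner Symmetrization of \cite{Brock} applies directly. Free boundaries that are singular \emph{curves} with $f(i)\neq 0$ are exactly the situation already treated in Theorem~\ref{thm:rigidity}. Hence each $D$ is one-dimensional: $\psi=\psi(y)$ on the channel and $\psi=\psi(r)$ on the annulus.

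It then remains to assemble the global statement. The one-dimensionality of each fluid sub-domain $D$ forces each of its free boundaries — being a boundary curve of an adjacent thick singular set — to be a horizontal segment (channel) or a concentric circle (annulus). Because two thick singular sets carrying distinct values cannot be adjacent (continuity of $\psi$ would be violated), and a single maximally connected one carries a single value, every thick singular set is sandwiched between one-dimensional regions, or between such a region and a boundary component of $M$. Consequently both boundary curves of each thick set are horizontal/circular, so the set is itself a flat strip or a concentric annulus on which $\psi$ is constant. Stacking the one-dimensional regions and the constant strips in the channel (resp. nesting them in the annulus) shows $\psi$ depends only on $y$ (resp. only on $r$) throughout $M$, i.e. $u$ is a shear or circular flow.

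The boundary hypothesis enters only for the sub-domains that meet $\partial M$: if a boundary component of $M$ is a regular (or regular-singular) subset of its level set, the adjacent $D$ inherits a genuine Dirichlet datum there and we are in the type-\eqref{eq:elliM}/\eqref{eq:ellionefree} situation whose boundary is already one-dimensional; if it belongs to a singular level set, it simply serves as one more free boundary with $\nabla\psi=0$. The step I expect to be the main obstacle is the bookkeeping in the fluid-subdomain construction: one must verify carefully that the accumulation of regular level sets is onto $\partial A_c$ and never onto the interior of a thick set, so that $c_\pm$ are genuinely achieved and the carved-out $D$ is free of singular level sets, and that the resulting collection of one-dimensional regions and constant strips/annuli exhausts $M$ without gaps. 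Each of these reduces to the continuity of $\Delta\psi$ and the positive-area flow-compression argument already exploited in Lemma~\ref{lemma:streamlines}, but organizing them into a clean partition of $M$ is where the care is needed.
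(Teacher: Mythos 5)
Your overall reduction is the one the paper actually carries out: re-run the structural results of \S\ref{sec:propertystream} at the level of maximally connected level sets, arrive at the same three overdetermined problems, and invoke the symmetry results of \S\ref{sec:rigidoverdetchannel}--\ref{sec:rigidoverdetannulus}. Your observation that a thick singular free boundary forces $f(i)=0$ (so that the extension of $\psi$ by its boundary value is a genuinely continuous solution and no $C^2$ regularity of the free boundary is needed) is correct and consistent with Lemma \ref{lemma:signf}; the paper does not state it in this form but obtains the same information via the constancy of $\omega$ on the level set. One caution: continuity of $\Delta\psi$ only gives $\omega=0$ on $\overline{\mathrm{int}\,A_c}$, which need not be all of $\partial A_c$ if $\Gamma_{c,\mathrm{out}}$ and $\Gamma_{c,\mathrm{in}}$ pinch together somewhere; you need the separate fact (from the adapted Lemma \ref{lemma:mainellipticeq}) that $\omega$ is constant on the whole connected level set to conclude $f(c)=0$ everywhere on it.

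The genuine gap is inside $D$, not on its boundary. The construction of Proposition \ref{prop:fluiddomain} excludes \emph{singular} level sets from $D$, but in this generalized setting $D$ may a priori contain \emph{regular-singular level sets with non-empty interior}: open pockets of zero velocity belonging to a level set that also carries moving points. Your proposal then asserts that Propositions \ref{prop:rigidonefree}, \ref{prop:rigidtwofree}, \ref{prop:rigidannulusonefree}, \ref{prop:rigidannulustwofree} ``apply directly,'' but as stated those propositions assume every level set in $D$ is a curve, and their proofs lean on that: Lemma \ref{lemma:technical} and Step 2/Step 3 of Proposition \ref{prop:rigidonefree} parametrize regular-singular streamlines by integral curves of $\nabla^\perp\psi$, and the Brock decomposition leaves behind a set $S$ with $\nabla\psi=0$ that must be shown empty. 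The paper explicitly flags this (``the reasoning concerning regular-singular level sets on Proposition \ref{prop:rigidonefree} and Proposition \ref{prop:rigidityannulus} needs to be modified'') and closes it by an extra argument: in the annulus, a regular point $q$ of such a level set lies in one of the nested annuli $U_k$ of the local radial symmetry, so the full circle through $q$ lies in the level set, and the monotonicity of $\psi$ across the nested $U_k$'s forces the level set to \emph{equal} that circle, excluding thick regular-singular sets altogether; an analogous sliding argument handles the channel. Without this step the conclusion ``each $D$ is one-dimensional'' does not follow from the quoted propositions, so you should add it. The remaining assembly (stacking one-dimensional regions and constant strips/annuli) is sound and equivalent to the paper's density-plus-continuity conclusion.
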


In what follows, we comment on the main modifications. To begin with, Lemma \ref{lemma:streamlines} now reads

\begin{lemma}
Boundary points of both regular-singular and singular level sets are not isolated from regular streamlines. 
\end{lemma}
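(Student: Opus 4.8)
The plan is to reproduce the area-preserving flow argument from the proof of Lemma \ref{lemma:streamlines}, the only genuinely new ingredient being that in this relaxed setting a level set $A_c$ may be two-dimensional (an open zero-velocity region). Fix a boundary point $p\in\partial A_c$ with $A_c$ singular or regular-singular and argue by contradiction, assuming that some ball $B_\delta(p)$ meets no regular streamline. I would first produce a point of non-vanishing gradient nearby: since $p\in\partial A_c$ and $A_c$ is a maximal connected component of $\psi^{-1}(c)$, the ball $B_\delta(p)$ is not contained in $A_c$, so $\nabla\psi\not\equiv0$ on $B_\delta(p)$ (otherwise $\psi$ would be constant there and connectedness would force $B_\delta(p)\subseteq A_c$, contradicting $p\in\partial A_c$). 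Pick $q\in B_\delta(p)$ with $\nabla\psi(q)\neq0$ and shrink to a closed ball $\overline{B_{\delta_1}(q)}\subseteq B_\delta(p)$ on which $\nabla\psi\neq0$. Every point of $B_{\delta_1}(q)$ then lies on a regular-singular level set: not on a singular one, since the gradient is non-zero there, and not on a regular one, by the standing assumption.

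The heart of the matter is to show that the forward orbit through almost every point of $B_{\delta_1}(q)$ limits onto a stagnation point lying outside $\overline{B_{\delta_1}(q)}$. Here the fat level sets cause the difficulty: a regular point $x$ might sit on a boundary loop $\Gamma_{c',\mathrm{out}}$ of a two-dimensional $A_{c'}$ that itself carries no stagnation point, so that the orbit of $x$ is merely periodic and approaches no singularity. I would dispose of this as follows. Distinct fat level sets are pairwise disjoint and each has positive area, so inside the finite-area domain $M$ only countably many values $c'$ can yield a two-dimensional $A_{c'}$; since $\nabla\psi\neq0$ makes $\psi$ a submersion on $\overline{B_{\delta_1}(q)}$, the preimage of this countable exceptional set of values has zero area. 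For every remaining value the level set is a single regular-singular closed curve, and on such a curve the forward orbit of a regular point converges to a stagnation endpoint: removing its stagnation points leaves open arcs on which the flow direction is constant, and because $\psi\in C^3$ makes $\nabla\psi$ Lipschitz, $|\nabla\psi|$ is bounded by a constant times the distance to the stagnation endpoint, so the travel time $\int\rmd\ell/|\nabla\psi|$ to reach it diverges. As $\nabla\psi\neq0$ throughout $\overline{B_{\delta_1}(q)}$, these limiting stagnation points do not belong to the closed ball.

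To close the argument I would invoke Poincar\'e recurrence. The velocity field $u=\nabla^\perp\psi$ is divergence-free and tangent to $\partial M$, so its flow is complete on the compact domain $M$ and preserves the finite Lebesgue measure; recurrence then requires almost every point of the positive-area set $B_{\delta_1}(q)$ to return to it infinitely often. This contradicts the previous paragraph, where almost every orbit was shown to exit $B_{\delta_1}(q)$ for all large times. Hence no such $\delta$ exists, and $p$ is not isolated from regular streamlines. I expect the main obstacle to be exactly this accommodation of open zero-velocity regions: once the area/countability dichotomy confines them to a null set of level values, the situation reduces to the thin regular-singular curves treated in Lemma \ref{lemma:streamlines}, where recurrence (equivalently, the ``compression to zero area'' of the flowed ball) yields the contradiction.
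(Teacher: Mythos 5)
Your proof is correct and follows essentially the same route as the paper: locate a sub-ball of non-vanishing gradient all of whose points lie on regular-singular level sets, then contradict incompressibility of the flow of $\nabla^\perp\psi$. Your Poincar\'e-recurrence and travel-time-divergence argument is a rigorous rendering of the paper's informal ``flowing the ball, we compress it to zero area,'' and your measure-zero treatment of boundary loops of fat level sets addresses a case the paper leaves implicit (in fact a periodic orbit consisting entirely of regular points would force its maximal level component to be a regular streamline, so that case is vacuous, but your handling is harmless).
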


\begin{proof}
Assume there is some boundary point isolated from regular streamlines. Any such boundary point $p$ of a regular-singular level set $A_c$ is such that $B_r(p)\cap A_c^c\neq \emptyset$, for all $r>0$. Here, $A_c^c$ denotes the complement of $A_c$. As before, if $\nabla\psi(p)\neq 0$ we have that $\nabla\psi\neq 0$ on $B_r(p)$, for some $r>0$ sufficiently small. By assumption, all $q\in B_r(p)$ do not belong to regular streamlines, neither to singular level sets because they have non-zero gradient, Thus they must lie on regular-singular streamlines. Flowing the ball $B_r(p)$ by $u=\nabla^\perp \psi$, we compress the ball, a contradiction.

On the other hand, if $\nabla\psi(p) = 0$, we still have that $\nabla\psi\not \equiv 0$ on $B_r(p)$. Indeed, if that where the case, we would have $\psi \equiv c$ on $B_r(p)$, and thus $B_r(p) \subset A_c$, a contradiction with $p$ being a boundary point. Thus, there is some $q\in B_r(p)$ with $\nabla\psi(q)\neq 0$. For some $\delta>0$ small enough, we have that $\nabla\psi\neq 0$ on $B_\delta(q) \subset B_r(p)$, namely all points in $B_\delta(q)$ belong to a regular-singular level sets and have non-trivial velocity there. As before, flowing the ball $B_\delta(q)$ we obtain a contradiction with incompressibility.

Hence, boundary points of regular-singular level sets cannot be isolated from regular streamlines. The arguments for boundary points of singular level sets are the same, we omit the details.
\end{proof}

The construction of the fluid sub-domain in Proposition \ref{prop:fluiddomain} remains unchanged. We comment on the main modifications required. Firstly, assuming that $\partial_{\hat n}\psi < 0$ on a regular level set $\Gamma_{c_0}$, the definitions of $c_-$ and $c_+$ now read
\begin{align}
c_- := \inf \lbrace \psi(A_c) : A_c &= \overline{\Omega_{\Gamma_{c,\mathrm{out}}}}\setminus \Omega_{\Gamma_{c,\mathrm{in}}} \text{ is a level set},\,  \psi|_{A_c}=c \text{ and } \\
&\quad\,\, \Omega_{\Gamma_{c,\mathrm{in}}}\setminus \overline{\Omega_{\Gamma_{c_0}}} \neq \emptyset  \text{ contains no singular level sets} \rbrace
\end{align}
and likewise
\begin{align}
c_+ := \sup \lbrace \psi(A_c) : A_c &= \overline{\Omega_{\Gamma_{c,\mathrm{out}}}}\setminus \Omega_{\Gamma_{c,\mathrm{in}}} \text{ is a level set},\, \psi|_{A_c}=c \text{ and }\\
&  \Omega_{\Gamma_{c_0}}\setminus \overline{\Omega_{\Gamma_{c,\mathrm{out}}}} \neq \emptyset  \text{ contains no singular level sets} \rbrace.
\end{align}

\begin{proof}[Proof of Proposition \ref{prop:fluiddomain}]
The infimum $c_-$ is attained at a level set associated to $c_-$, which is achieved by continuity and compactness of a sequence of approximating level sets. More precisely, the compactness argument produces gives a point $p$, and we define $A_c\ni p$ to be the maximally connected component of the level set in which $p$ lies. 

Now, if $A_c$ is interior to $M$, then it is a singular level, the argument to prove so remains unchanged: any $q\in A_c$ with $\nabla\psi(q)\neq 0$ must, not only be a boundary point, namely $q\in \Gamma_{c,\mathrm{out}}$ or $q\in \Gamma_{c,\mathrm{in}}$, but also $q\not \in \overline{\text{int} A_c}$, that is $q\in \Gamma_{c,\mathrm{out}}\cap \Gamma_{c,\mathrm{in}}$. There, if $\partial_{\hat n}\psi(q)>0$ we reach a contradiction with $c_- < \psi < c_0$ in $D$, while if $\partial_{\hat n}\psi(q)<0$, then $A_c$ is not singular and thus $c_-$ is not the infimum.

On the other hand, if $A_c$ has non-trivial intersection with the upper channel boundary, then by assumption $A_c$ is either singular or the upper boundary itself.
\end{proof}

We next present a suitable adaptation to Lemma \ref{lemma:mainellipticeq}. In what follows, $D$ is defined to be the region bounded by the limiting level sets above and below $\Gamma_{c_0}$. By simplicity, we take $D$ to be the region bounded above by $\Gamma_{c_-, \mathrm{in}}$ and bounded below by $\Gamma_{c_+, \mathrm{in}}$, although the other three choices are equally valid.

\begin{proof}[Proof of Lemma \ref{lemma:mainellipticeq}]
The absence of singular level sets prevents the appearance of  local maxima and minima. Hence, $\psi$ attains different values on its boundaries. The argument to show that $f$ is well-defined on regular streamlines and regular-singular level sets given by closed non-contractible curves requires no modification. For a regular-singular level set with non-empty interior, it is clear that the vorticity is zero on the closure of its interior. All other points of the regular-singular level set are boundary points that are not limits of interior points. The arguments in the proof show that $f$ is well defined as well for them, namely the vorticity is constant on these points. Since the regular-singular level set is connected and the vorticity is continuous, then the whole regular-singular level set has zero vorticity, $f$ is well defined. The continuity of $f$ needs no adjustments, neither does its $C^1$ smoothness, although we remark here again that the existence of points $q$ in regular-singular level sets $A_c$ with $\nabla\psi(q)\neq 0$ is crucial to derive a $C^1$ smooth mapping of stream values to vorticity values.
\end{proof}

Once these issues have been addressed, most of the arguments in Sections \ref{sec:rigidoverdetchannel}, \ref{sec:rigidoverdetannulus} and \ref{sec:rigidfreeboundary} apply verbatim to the (over-determined) elliptic problems obtained in Lemma \ref{lemma:mainellipticeq}. However, the reasoning concerning regular-singular level sets on Proposition \ref{prop:rigidonefree} and Proposition \ref{prop:rigidityannulus} needs to be modify. We argue for the annular case, the idea applies equally to the channel scenario.

\begin{proof}[Proof of Proposition \ref{prop:rigidityannulus}]
We only need to study regular-singular level sets. Assume $A_c$ is such a regular-singular level set and let $q\in\Gamma_c$ be such that $\nabla\psi(q)\neq 0$. Then, $q\in U_k$ for some $k\geq 1$, so that $r_k < |q-\x_k|<R_k$. Therefore, the set $\Gamma := \lbrace \x\in M: |x-\x_k| = |q-\x_k|\rbrace$ is a closed non-contractible circle, contained in $U_k$ and, more importantly, it is such that $\psi(\x) = \psi(q) =c$, for all $\x\in \Gamma$. Assume now that there is some $q_0\in A_c\setminus \Gamma$, that is, $|q_0-\x_k| > |q-\x_k|$ or  $|q_0-\x_k| < |q-\x_k|$. Since the $U_k$ are nested, $\psi$ is radially decreasing there and possibly flat in $S$, in the former case we reach $\psi(q_0) > \psi(q)$, while in the latter case we obtain $\psi(q_0) < \psi(q)$. Both lead to $q_0\not\in A_c$, a contradiction. Hence, it must be $A_c=\Gamma$ and $\partial_\rho\psi < 0$ in $A_c$, thus concluding that there are no regular-singular level sets.
\end{proof}
The channel scenario is equally similar. Sliding a point in a regular-singular level set with $\partial_y\psi < 0$ we obtain a closed non-contractible curve $\Gamma$ such that $\partial_y\psi < 0$ there. Since $\partial_y\psi \leq 0$ in $\mathbb{T}\times(0,1)$ we then must have $A_c=\Gamma$ as before, since any point $p$ not in $\Gamma$ must necessarily have  $\psi(p) < c$ or $\psi(p)> c$.

%
%	\begin{figure}[h!]
%		\centering
%			\includegraphics[width=\textwidth]{twofreebdry1.jpg} 
%					\caption{The translated lower boundary does not intersect its original counterpart, so clearly all points in the lightblue boundary become interior when translated.} 
%	\end{figure}
%	
%	
%	\begin{figure}[h!]
%		\centering
%			\includegraphics[width=\textwidth]{twofreebdry2.jpg} 
%					\caption{The translated lower boundary also intersects its original counterpart. A local analysis on boundary points shows that eventually there are points in the translated upper (or lower) boundary, in the picture denoted by $\tilde{\x}_1$, that are interior to the original domain. Even in the curves are locally flat, since they intersect with a non-zero vertical translation of themselves they cannot be globally flat.} 
%	\end{figure}
%
%	\begin{figure}[h!]
%		\centering
%			\includegraphics[width=0.5\textwidth]{intersection.jpg} 
%					\caption{The translated curve $\Gamma_h^\tau$ will only intersect completely $\Gamma_h$ if the sliding direction is too steep.} 
%	\end{figure}
%
%
%
%	\begin{figure}[h!]
%		\centering
%			\includegraphics[width=\textwidth]{onefreebdry.jpg} 
%					\caption{In the presence of one free nondegenerate boundary, we can use the Hopf Lemma since we can place an interior ball in the domain. The translated ball still falls within the domain and remains tangent to the translated boundary point.} 
%	\end{figure}

\section{Rigidity of shear flows: Proof of Theorem \ref{mainshearrigid}}\label{sec:mainshearrigid}
After having obtained the above rigidity results for solutions to free boundary problems we are now in position to prove Theorem \ref{mainshearrigid}
\begin{proof}[Proof of Theorem \ref{mainshearrigid}]
The proof studies local properties of the background shear flow according to the vanishing of the vorticity on the set of stagnation points, in order to conclude that nearby stationary solutions are laminar. We shall see that if the vorticity is non-zero, we can find a small tubular region where sufficiently close velocity fields $u_\varepsilon$ only vanish along an unique streamline, so that $u_\varepsilon$ is laminar inside the tubular domain. If the background vorticity is zero, then we can construct a fluid sub-domain within a small tubular region where (1) the vorticity has no critical points there, (2) the vorticity vanishes on a closed non-contractible curve inside and (3) any stagnation points of $u_\varepsilon$ must lie within this fluid sub-domain, so that $u_\varepsilon$ is also laminar there. Throughout the proof we assume that $u_\varepsilon$ is a $C^2(\overline{M})$ steady solution of the Euler equations such that $\Vert u_\varepsilon - u \Vert_{C^2}=\varepsilon$, where $u=(U(y),0)$. For convenience, we assume that $U(0)=0$.
\subsection*{Step 1}Assume first that $U'(0)=U_0'\neq 0$. Then, $U(y) = \int_0^y U'(s)  \rmd s \geq c_0y$, for all $\delta_1 \geq y\geq 0$ and there holds $|U(y)|\geq c_0\delta_0$, for all $\delta_1 \geq |y|\geq \delta_0>0$. Moreover, by continuity, $|U''(y)|\geq c_1>0$, for all $|y|\leq \delta_1$, for some $\delta_1>0$ sufficiently small.

Now, any sufficiently small in $C^2$ perturbation $u_\varepsilon(x,y)$ of the steady velocity field $(U(y),0)$ is such that $|\partial_y\omega_\varepsilon|>\frac{c_1}{4}$, for all $|y|\leq \delta_1$, for $\varepsilon>0$ sufficiently small and independent of $\delta_1$. On the other hand, $|u^1_\varepsilon(x,y)|\geq \frac{c_0}{2}\delta_0$, for all $|y|\geq \delta_0$ and all $\varepsilon< \frac{c_0}{2}\delta_0$. Additionally, due to the strict monotonicity of $u^1_\varepsilon$, there exists a curve $\Gamma\subset \mathbb{T}\times(-1,1)$ such that $u^1_\varepsilon(x,y)=0$, for all $(x,y)\in \Gamma$. In fact, by the previous observation, such curve $\Gamma$ is contained in $\mathbb{T}\times (-\delta_0,\delta_0)$ and $u^1_\varepsilon>0$ in the region bounded below by $\Gamma$ and $u^1_\varepsilon<0$ in the region bounded above by $\Gamma$.

Taking $\delta=\min\lbrace \frac{c_0}{2}\delta_0, \delta_1 \rbrace$, we see that $u^1_\varepsilon$ vanish in $\Gamma\subset \mathbb{T}\times (-\delta, \delta)$ and $|\partial_y\omega_\varepsilon| > 0$  in $\Gamma\subset \mathbb{T}\times (-\delta, \delta)$. In particular, from the Euler equations
\begin{equation}
0=u^1_\varepsilon\partial_x\omega_\varepsilon + u^2_\varepsilon\partial_y\omega_\varepsilon
\end{equation}
we conclude that $u^2_\varepsilon\equiv0$ on $\Gamma$ as well. Hence, in $\mathbb{T}\times (-\delta,\delta)$, we see that $u_\varepsilon$ only vanishes on the streamline $\Gamma$. 

\subsection*{Step 2} Assume now that $U'_0=0$ and $U_0'':=U''(0)>0$, say. Since $U\in C^2$, there exists some $\delta_0>0$ such that $\frac{U_0''}{2}\leq U''(y) \leq \frac{3U_0''}{2}$ in $(-\delta_0,\delta_0)$ and there holds 
\begin{equation*}
- \frac{3U_0''}{2}y \leq \omega(y) \leq -\frac{U_0''}{2}y, 
\end{equation*}
for all $y\in (0,\delta_0)$ and
\begin{equation*}
-\frac{U_0''}{2}y \leq \omega(y) \leq -\frac{3U_0''}{2}y, 
\end{equation*}
for all $y\in (-\delta_0,0)$. Since $\partial_y\omega(y)=-U''(y) < -\frac{U_0''}{2}$ any sufficiently small $C^2$ perturbation $u_\varepsilon$ of $u$ must have  $\partial_y\omega_\varepsilon \leq -\frac{U_0''}{4}$ hence $\nabla\omega_\varepsilon\neq 0$ in $\mathbb{T}\times(-\delta_0,\delta_0)$. We now find a fluid sub-domain in $\mathbb{T}\times(-\delta_0,\delta_0)$. Let $0<\delta_1 < \delta_2 < \delta_0$, such that $\delta_1 < \frac{1}{8}\delta_2$ and $\delta_2 < \frac{1}{10}\delta_0< \frac{1}{8}\delta_0$. Then,
\begin{equation}
|\omega(y)| <  \frac{3}{2}U_0''\delta, \text{ for all } |y|<\delta
\end{equation}
and 
\begin{equation}
 \frac{U_0''}{2}\delta_i < |\omega(y)| <  \frac{3}{2}U_0''\delta_j, \text{ for all } \delta_i < |y| <\delta_j.
\end{equation}
Therefore, any $\omega_\varepsilon$ sufficiently close to $\omega$ satisfies
\begin{equation}
|\omega_\varepsilon(x,y)| <  2U_0''\delta, \text{ for all } |y|<\delta
\end{equation}
and 
\begin{equation}
 \frac{U_0''}{4}\delta_i < |\omega_\varepsilon(x,y)| <  2U_0''\delta_j, \text{ for all } \delta_i < |y| <\delta_j.
\end{equation}
Since $\delta_1 < \frac{1}{8}\delta_2$, we deduce that for any  $\x\in\mathbb{T}\times(-\delta_1, \delta_1)$ the vortex-line $\Gamma_{\omega_\varepsilon(\x)}\subset\mathbb{T}\times(-\delta_2,\delta_2)$ and likewise, any $\x\in\mathbb{T}\times(-\delta_0/10, \delta_0/10)$ has its associated the vortex-line $\Gamma_{\omega_\varepsilon(\x)}\subset\mathbb{T}\times(-\delta_0,\delta_0)$. More importantly, for all $\x=(x,y)$ such that $\delta_2 < |y| < \delta_0/10$, we have $|\omega_\varepsilon(\x)| > \frac{U_0''}{4}\delta_2$ and the associated vortex line $\Gamma_{\omega_\varepsilon(\x)}\subset \mathbb{T}\times(-\delta_0,\delta_0)$ is such that $\Gamma_{\omega_\varepsilon(\x)}\cap\mathbb{T}\times (-\delta_1,\delta_1)=\emptyset$, since $|\omega_\varepsilon|< 2U_0''\delta_1$ there.

Now, let $\x_1=(x_1,y_1)$, for some $x_1\in\mathbb{T}$ and $\delta_2 < y_1 < \delta_0/10$ and let $\x_2=(x_1,-y_1)$. If $\omega_\varepsilon$ is sufficiently close to $\omega$ in $C^1$, there holds $\omega_\varepsilon(\x_1) < \omega_\varepsilon(\x_2)$. Let $\Gamma_{\omega_\varepsilon(\x_i)}$ be the associated vortex-lines. Since $\Gamma_{\omega_\varepsilon(\x_i)}\subset \mathbb{T}\times (-\delta_0, \delta_0)$, we have $\nabla\omega_\varepsilon\neq 0$ on $\Gamma_{\omega_\varepsilon(\x_i)}$. Therefore, $\Gamma_{\omega_\varepsilon(\x_i)}$ are $C^1$ closed non-contractible curves. Since the normal vector on $\Gamma_{\omega_\varepsilon(\x_i)}$ is proportional to $\nabla\omega_\varepsilon\neq 0$ and $u_\varepsilon\cdot\nabla\omega_\varepsilon = 0$, we deduce that $u_\varepsilon$ is tangent to $\Gamma_{\omega_\varepsilon(\x_i)}$, and the region $D=\Omega_{\Gamma_{\omega_\varepsilon(\x_1)}}\setminus\overline{\Omega_{\Gamma_{\omega_\varepsilon(\x_2)}}}$ in $\mathbb{T}\times(-\delta_0,\delta_0)$ constitutes an admissible fluid sub-domain in which $u_\varepsilon$ is laminar. 

Indeed, from the Euler equations we now observe that
\begin{equation}
\nabla^\perp\omega_\varepsilon \cdot \nabla\psi_\varepsilon = 0
\end{equation}
in $D$, with $u_\varepsilon\cdot \hat{n}=0$ on $\partial D = \Gamma_{\omega_\varepsilon(\x_1)}\cup \Gamma_{\omega_\varepsilon(\x_2)}$. Now, since $\nabla\omega_\varepsilon\neq 0$, the vorticity effectively transports the stream-function along its vortex-lines. Therefore, by Lemma \ref{lemma:mainellipticeq}, there exists some $g\in C([\omega_\varepsilon(\x_1), \omega_\varepsilon(\x_2)])\cap C^1(\omega_\varepsilon(\x_1), \omega_\varepsilon(\x_2))$ such that $\psi_\varepsilon = g(\omega_\varepsilon)$ in $D$, so that the stream-lines of $\psi$ have the same topology as the vortex-lines of $\omega_\varepsilon$, which are closed non-contractible curves. Hence, $u_\varepsilon$ is laminar in $D$. Furthermore, since $U(y) \geq \frac{U_0''}{4}y^2$  in $(-\delta_0,\delta_0)$, we have that $U(y) \geq \frac{U_0''}{4}\delta_1^2$, for all $|y|\geq \delta_1$. Then, for sufficiently small perturbations $u_\varepsilon$ of $u$, there holds $u^1_\varepsilon \geq \frac{U_0''}{8}\delta_1^2$, for all $|y|\geq \frac{\delta_1}{4}$.

\subsection*{Step 3} We can repeat the above arguments for finitely many other stagnation lines $y=y_i$ of $U(y)$. For those $y_i$ with $U'(y_i)\neq 0$, we can find a small periodic tube $\mathbb{T}\times (y_i-\delta_i, y_i + \delta_i)$ where $u_\varepsilon=0$ only on a closed non-contractible curve $\Gamma_{i}$. For those $y_i$ with $U'(y_i) = 0$, we likewise find fluid sub-domains $D_i= \Omega_{\Gamma_i^-}\setminus \overline{\Omega_{\Gamma_i^+}}$ within a small periodic tube $\mathbb{T}\times (y_i-\delta_i, y_i + \delta_i)$ bounded by the vortex-lines (and also streamlines) $\Gamma_i^-$ and $\Gamma_i^+$. Also, $u_\varepsilon$ is laminar in $D_i$ and any possible stagnation point of $u_\varepsilon$ in $\mathbb{T}\times(y_i-\delta_i, y_i+\delta_i)$ lies inside the fluid domain $D_i$. Set then $I = (-1,1)\setminus \cup_{i=0}^m (y_i-\delta_i, y_i+\delta_i)$ and since $|U(y)| \geq c >0$ on $I$, we note that $u^1_\varepsilon\neq 0$, and thus $u_\varepsilon\neq 0$, on $\mathbb{T}\times I$. 

We now observe that $u_\varepsilon$ is laminar in $\mathbb{T}\times(-1,1)$. To see this, note that the streamlines $\Gamma_i$, $\Gamma_i^-$ and $\Gamma_i^+$ decompose the periodic channel into regions bounded by them. Between any two $\Gamma_i^-$ and $\Gamma_i^+$ we have seen that $u_\varepsilon$ is laminar, while since either $\Omega_{\Gamma_{i+1}}\setminus \overline{\Omega_{\Gamma_{i}^-}}$ or $\Omega_{\Gamma_{i+1}^+}\setminus \overline{\Omega_{\Gamma_{i}^-}}$ are fluid sub-domains (the boundaries are stream-lines of the flow, $u_\varepsilon$ is tangent there) and $u_\varepsilon\neq 0$ there, $u_\varepsilon$ is laminar as well. Theorem \ref{thm:rigidity} allows us to conclude that $u_\varepsilon$ is a shear flow.
\end{proof}

Finally, we end with a much stronger rigidity result assuming non-vanishing velocity. 
Consider the 2d Euler equation written for a perturbation $\omega$ about a shear flow $(v(y),0)$:
\begin{align}\label{euler}
\partial_t \omega + v(y) \partial_x \omega -v''(y) \partial_x \Delta^{-1} \omega&=  -u \cdot \nabla \omega.
\end{align}
with $u= K[\omega]$.
Under the non-vanishing assumption, this can be recast with $\psi= \Delta^{-1}\omega$ as
\be
\partial_t \omega+  \nabla \cdot \left(|v(y)|^2 \nabla \left(\frac{\partial_x \psi }{v(y)}\right)\right)  =-  u \cdot \nabla  {\omega}.
\ee
\begin{theorem}\label{rigiditynonstagnant}
Let $\bar{u}=(v(y),0)$ be a non-stagnant shear flow (e.g. satisfying $\inf_{\Omega} |v| >0$) and $\Omega$  be a periodic channel.  There exists an $\varepsilon:= \varepsilon(\inf_{\Omega} |v|, \|v\|_{C^1}, \Omega)$ such that all stationary Euler states in $u$ satisfying $\| u-\bar{u}\|_{W^{1, 2+}(\Omega)}< \varepsilon$ are shear flows.
\end{theorem}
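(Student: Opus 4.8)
The plan is to exploit the divergence-form reformulation recorded just above the statement. Writing $u=\bar u+u'$ with perturbation stream function $\psi$ (so $u'=\nabla^\perp\psi$) and perturbation vorticity $\omega=\Delta\psi$, stationarity ($\partial_t\equiv 0$) turns that displayed equation into
\[
\nabla\cdot\!\left(v^2\nabla\Phi\right)=-\,u'\cdot\nabla\omega,\qquad \Phi:=\frac{\partial_x\psi}{v}.
\]
Since the total stream function is constant in $x$ on each rigid wall and $v\neq 0$ there, $\Phi$ vanishes on $\partial\Omega$; and since $\inf_\Omega|v|>0$, the operator $\Phi\mapsto\nabla\cdot(v^2\nabla\Phi)$ is uniformly elliptic. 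Showing that $u$ is a shear flow is exactly showing $\partial_x\psi\equiv 0$, i.e. $\Phi\equiv 0$. First I would record the regularity extracted from the hypothesis: $\|u'\|_{W^{1,2+}}<\varepsilon$ gives $\psi\in W^{2,2+}$, hence $\omega\in L^{2+}$ and $\nabla u'\in L^{2+}$ of size $\lesssim\varepsilon$, while the two-dimensional embedding $W^{1,2+}\hookrightarrow L^\infty$ yields $u'\in L^\infty$ and $\Phi\in W^{1,2+}$ with $\|u'\|_{L^\infty}+\|\Phi\|_{W^{1,2+}}\lesssim\varepsilon$.

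Next I would test the equation against $\Phi$ and integrate by parts; the boundary terms drop because $\Phi|_{\partial\Omega}=0$ and $u'\cdot\hat n=0$, giving the energy identity
\[
\int_\Omega v^2|\nabla\Phi|^2\,\rmd A=-\int_\Omega \omega\,\big(u'\cdot\nabla\Phi\big)\,\rmd A=R_1+R_2,
\]
where, using $u'_2=\partial_x\psi=v\Phi$, one has $R_1=-\int_\Omega\omega\,u'_1\,\partial_x\Phi$ and $R_2=-\int_\Omega\omega\,(v\Phi)\,\partial_y\Phi$. The left side is coercive, $\geq(\inf_\Omega|v|^2)\,\|\nabla\Phi\|_{L^2}^2$, so the whole argument reduces to proving $|R_1|+|R_2|\leq C\varepsilon\,\|\nabla\Phi\|_{L^2}^2$ with $C=C(\inf|v|,\|v\|_{C^1},\Omega)$; then choosing $\varepsilon<\inf_\Omega|v|^2/C$ forces $\nabla\Phi\equiv 0$ and hence $\Phi\equiv 0$.

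The term $R_2$ is harmless because it carries \emph{two} $\Phi$-factors: Hölder with $\omega\in L^{2+}$, $\partial_y\Phi\in L^2$, and $\Phi\in L^s$ for any $s<\infty$ (controlled by $\|\nabla\Phi\|_{L^2}$ via $H^1\hookrightarrow L^s$ in two dimensions) gives $|R_2|\lesssim\varepsilon\|\nabla\Phi\|_{L^2}^2$. The genuine difficulty is $R_1$: it contains only the \emph{single} factor $\partial_x\Phi$ paired with the vorticity $\omega$, which sits one derivative above $\Phi$ and is merely $L^{2+}$; a direct Hölder bound yields only the useless linear estimate $\varepsilon^2\|\nabla\Phi\|_{L^2}$. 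The crucial maneuver is to integrate $R_1$ by parts in the periodic variable $x$ and invoke the two convention-free identities
\[
\partial_x\omega=\Delta(\partial_x\psi)=\Delta(v\Phi),\qquad \partial_x u'_1=-\partial_y u'_2=-\partial_y(v\Phi),
\]
the second from $\nabla\cdot u'=0$. Both re-express the $x$-derivative of a borderline quantity purely through $\Phi$, so after this integration by parts (and one further integration by parts in the leading piece $\int\Delta(v\Phi)\,u'_1\Phi=-\int\nabla(v\Phi)\cdot\nabla(u'_1\Phi)$, which simultaneously discards $v''$ and leaves only $\|v\|_{C^1}$) every term now carries two $\Phi$-type factors multiplying one of the $\varepsilon$-small $L^{2+}$ quantities $\omega$ or $\nabla u'$. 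One $\Phi$-factor absorbs the excess integrability through $H^1\hookrightarrow L^s$ and the other supplies $\|\nabla\Phi\|_{L^2}$, yielding $|R_1|\lesssim\varepsilon\|\nabla\Phi\|_{L^2}^2$.

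This is precisely where the exponent $2+$ rather than $2$ is essential: it makes the Sobolev exponent $s$ finite, whereas the endpoint $p=2$ would demand the false embedding $H^1\hookrightarrow L^\infty$. Assembling the two bounds gives $(\inf_\Omega|v|^2-C\varepsilon)\,\|\nabla\Phi\|_{L^2}^2\leq 0$, so for $\varepsilon$ small enough $\Phi\equiv 0$, i.e. $\partial_x\psi\equiv 0$ and $u=u(y)$ is a shear flow. I expect the main obstacle to be exactly the treatment of $R_1$ at the $L^{2+}$ threshold; the identities $\partial_x\omega=\Delta(v\Phi)$ and $\partial_x u'_1=-\partial_y(v\Phi)$, which manufacture the missing second $\Phi$-factor, are what make the estimate close at this borderline regularity.
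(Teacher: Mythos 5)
Your proposal is correct and follows essentially the same route as the paper: the identical energy identity $\int_\Omega v^2|\nabla\Phi|^2 = -\int_\Omega \omega\, u'\cdot\nabla\Phi$ with $\Phi=\partial_x\psi/v$, the same split into the two terms $R_1$ (paired with $u_1'=-\partial_y\psi$) and $R_2$ (paired with $u_2'=v\Phi$), the same H\"older--Sobolev treatment of $R_2$ at the $L^{2+}$ level, and the same mechanism for closing $R_1$ — integrating by parts using $\omega=\Delta\psi$ and $\partial_x\psi=v\Phi$ so that every term acquires two $\Phi$-factors weighted by an $\varepsilon$-small quantity. The only divergence is bookkeeping: the paper expands $\omega=\partial_x^2\psi+\partial_y^2\psi$ pointwise and symmetrizes to get $\int v\,|\nabla\Phi|^2\,\partial_y\psi$-type terms controlled by $\|v\|_{C^1}\|u'\|_{L^\infty}$, whereas you integrate by parts in $x$ first via $\partial_x\omega=\Delta(v\Phi)$ and $\partial_x u_1'=-\partial_y(v\Phi)$, arriving at an equivalent collection of terms with the same final bound $C\varepsilon\|\nabla\Phi\|_{L^2}^2$.
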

\begin{proof} 
On time independence solutions ($\partial_t \omega=0)$, multiplying by $\frac{\partial_x \psi}{v(y)}$ and integrating  we find
\begin{align*}
\left(\inf_{\Omega} |v|^2\right)  \left\| \nabla \left(\frac{\partial_x \psi}{v(y)}\right)\right\|_{L^2}^2&\leq \int  |v(y)|^2 \left|\nabla \left(\frac{\partial_x \psi}{v(y)}\right)\right|^2 \rmd x\rmd y  \\
&=  \int  \frac{ \partial_x \psi }{v(y)} u \cdot \nabla {\omega}  \rmd x\rmd y = -\int {\omega}\nabla^\perp \psi\cdot  \nabla \left(\frac{\partial_x \psi}{v(y)}\right) \rmd x\rmd y \\
    &= \int {\omega} \left(\frac{\partial_x^2 \psi \partial_y \psi   }{v(y)}\right) \rmd x\rmd y -  \int v(y) {\omega}  \partial_y\left(\frac{\partial_x \psi}{v(y)}\right)  \left(\frac{\partial_x \psi}{v(y)}\right) \rmd x\rmd y .
\end{align*}
Note first that there exists a universal constant $C>0$ (depending only on $v$) such that
\begin{align*}
\left|  \int v(y) {\omega}  \partial_y\left(\frac{\partial_x \psi}{v(y)}\right)  \left(\frac{\partial_x \psi}{v(y)}\right) \rmd x\rmd y \right| %&\leq \|v\|_{L^\infty}\|\omega\|_{L^{2+}}   \left\| \nabla \left(\frac{\partial_x \psi}{v(y)}\right)\right\|_{L^2} \left\|  \frac{\partial_x \psi}{v(y)}\right\|_{L^{\infty-}}
&\leq C\|v\|_{L^\infty}\|\omega\|_{L^{2+}}   \left\| \nabla \left(\frac{\partial_x \psi}{v(y)}\right)\right\|_{L^2}^2.
\end{align*}
Now note that 
\begin{align*}
 \int {\omega} \left(\frac{\partial_x^2 \psi \partial_y \psi   }{v(y)}\right)\rmd x\rmd y &=  \int v(y) \left|\partial_x \left(\frac{\partial_x \psi}{v(y)}\right)\right|^2 \partial_y \psi  \rmd x\rmd y  +   \int \frac{\partial_x^2 \psi\partial_y^2 \psi \partial_y \psi   }{v(y)} \rmd x\rmd y \\
 &=  \int v(y) \left|\partial_x \left(\frac{\partial_x \psi}{v(y)}\right)\right|^2 \partial_y \psi  \rmd x\rmd y  +  \frac{1}{2} \int \partial_y \left(\frac{\partial_x \psi }{v(y)}\right) \partial_y\partial_x \psi  \partial_y \psi \rmd x\rmd y \\
  &=  \int v(y) \left|\partial_x \left(\frac{\partial_x \psi}{v(y)}\right)\right|^2 \partial_y \psi \rmd x\rmd y +  \frac{1}{2} \int  v(y)\left|\partial_y  \left(\frac{\partial_x \psi }{v(y)}\right)\right|^2   \partial_y \psi \rmd x\rmd y  \\
  &\qquad \qquad - \int v'(y) \partial_y \left(\frac{\partial_x \psi }{v(y)}\right) \frac{\partial_x \psi}{v(y)}   \partial_y \psi \rmd x\rmd y .
\end{align*}
Whence, for some universal $C>0$, we have
\begin{align*}
\left|  \int {\omega} \left(\frac{\partial_x^2 \psi \partial_y \psi   }{v(y)}\right) \right| &\leq C\|v\|_{C^1}\|u\|_{L^{\infty}}   \left\| \nabla \left(\frac{\partial_x \psi}{v(y)}\right)\right\|_{L^2}^2.
\end{align*}
\end{proof}
We remark that by  the result of Hamel and Nadirashvili  \cite{hamel2017shear, hamel2019liouville, HM23} can already be used to say that sufficiently regular velocities within an $L^\infty$ neighborhood of a non-vanishing shear are themselves shear flows.  The interest in Theorem \ref{rigiditynonstagnant} is the brevity of the proof, and that it applies to much lower regularity equilibria. 
Of course, Theorem \ref{rigiditynonstagnant} does not prevent nearby time periodic solutions with islands floating by with the current.

\section{Inviscid dynamical structures near shear flows: Proof of Theorem \ref{flexthm}}\label{flexsec}

We recap by recalling that Theorem \ref{rigiditynonstagnant} above, and also a Hamel and Nadirashvili's result, shows that non-vanishing shear flows are isolated from non-shears in $L^\infty$ of velocity. For Poiseuille flow  $v(y) = y^2$, we proved in Theorem \ref{mainshearrigid} that it is isolated in the $C^{2}$ topology from non-shear flows. We will now show that this rigidity is sharp by proving that non-shear flows appear arbitrarily close to $(y^n,0)$ in the $C^{n-}$ topology. 

As usual, for $n\geq 0$ and $0<\alpha<1$ we define for $v=v(y):(-1,1)\rightarrow \R$, 
\be 
\quad \Vert v \Vert_{C^{n,\alpha}(-1,1)} := \Vert v \Vert_{C^n(-1,1)} + | \partial_y^n v |_{C^\alpha(-1,1)},
\ee
where
\be
\Vert v \Vert_{C^n(-1,1)} := \sum_{k=0}^n \Vert \partial_y^k v \Vert_{L^\infty(-1,1)}, \quad | v |_{C^\alpha(-1,1)} := \sup_{y_1\neq y_2}\frac{|v(y_1)- v(y_2)|}{|y_1-y_2|^\alpha}.
\ee

The proof of the flexibility in spaces $C^{n-}=C^{n-1,\alpha}$, for all $0<\alpha<1$ is carried out in two steps. Firstly, we approximate the shear flow $v(y) = y^n$ by a zero velocity field in a small region. Then, we embed a compactly supported radial velocity field inside the approximating region. Since the background velocity is zero, this radial field constitutes a steady nonshear solution to Euler.

Let $\chi(\eta)$ be a smooth cut-off function such that 
\be
\chi(\eta)=
\begin{cases}
0, & |\eta|\leq 1, \\
1, & |\eta|\geq 2.
\end{cases}
\ee
and for $\varepsilon>0$ consider the shear flow approximation $v_\varepsilon(y) = v(y)\chi\left(\frac{y}{\varepsilon}\right)$. The purpose of the following lemma is to show that $v(y)$ is approximated by $v_\varepsilon(y)$ in $C^{n-}$. 
\begin{lemma}\label{lemma:Cnalpha approx}
Let $\alpha\in(0,1)$ and $\varepsilon_0>0$. Then, there exists $\varepsilon_1>0$ such that for all $\varepsilon\in(0,\varepsilon_1)$, 
\be
\Vert v - v_\varepsilon \Vert_{C^{n-1,\alpha}(-1,1)} < \frac12 \varepsilon_0.
\ee
\end{lemma}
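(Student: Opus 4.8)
The plan is to note that the difference is supported in a thin strip of width $O(\varepsilon)$ and to track the derivative scaling of the rescaled cutoff there. Write $\phi := 1-\chi$, so that $\phi$ is smooth, equals $1$ for $|\eta|\le 1$ and vanishes for $|\eta|\ge 2$; then $w_\varepsilon := v - v_\varepsilon = y^n\phi(y/\varepsilon)$, and $\supp w_\varepsilon \subseteq [-2\varepsilon,2\varepsilon]$ because $\phi(y/\varepsilon)=0$ once $|y|\ge 2\varepsilon$. The first step is to compute $\partial_y^k w_\varepsilon$ by the Leibniz rule: each term pairs $\partial_y^j(y^n)=\tfrac{n!}{(n-j)!}y^{n-j}$ with $\partial_y^{k-j}[\phi(y/\varepsilon)]=\varepsilon^{-(k-j)}\phi^{(k-j)}(y/\varepsilon)$. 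On the support we bound $|y^{n-j}|\le (2\varepsilon)^{n-j}$, so each term is $O\big(\varepsilon^{n-j}\varepsilon^{-(k-j)}\big)=O(\varepsilon^{n-k})$, whence $\|\partial_y^k w_\varepsilon\|_{L^\infty(-1,1)}\le C_{n,k}\varepsilon^{n-k}$, with $C_{n,k}$ depending only on $n,k$ and finitely many derivatives of $\chi$. In particular, for every $0\le k\le n-1$ we get $\|\partial_y^k w_\varepsilon\|_{L^\infty}\lesssim \varepsilon^{n-k}\le \varepsilon$, so all of these sup-norms tend to $0$ as $\varepsilon\to 0$.

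The main obstacle — and precisely the reason the regularity threshold is $C^{n-}$ rather than $C^n$ — is that the top derivative does \emph{not} become small: the same scaling gives only $\|\partial_y^n w_\varepsilon\|_{L^\infty}=O(\varepsilon^0)=O(1)$, so one cannot control the H\"older seminorm $|\partial_y^{n-1}w_\varepsilon|_{C^\alpha}$ through a $C^n$ bound. The way around this is interpolation. Setting $g:=\partial_y^{n-1}w_\varepsilon$, we have $\|g\|_{L^\infty}\lesssim \varepsilon$ and $\|g'\|_{L^\infty}=\|\partial_y^n w_\varepsilon\|_{L^\infty}\lesssim 1$. Splitting the difference quotient $\tfrac{|g(y_1)-g(y_2)|}{|y_1-y_2|^\alpha}$ at a threshold $|y_1-y_2|=\delta$, I would bound it by $\|g'\|_{L^\infty}\delta^{1-\alpha}$ for $|y_1-y_2|\le\delta$ and by $2\|g\|_{L^\infty}\delta^{-\alpha}$ for $|y_1-y_2|>\delta$; optimizing $\delta\sim \|g\|_{L^\infty}/\|g'\|_{L^\infty}$ yields the standard estimate $|g|_{C^\alpha}\lesssim \|g\|_{L^\infty}^{1-\alpha}\|g'\|_{L^\infty}^{\alpha}\lesssim \varepsilon^{1-\alpha}$, which is small precisely because $\alpha<1$.

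Combining the two estimates gives $\|w_\varepsilon\|_{C^{n-1,\alpha}(-1,1)}=\sum_{k=0}^{n-1}\|\partial_y^k w_\varepsilon\|_{L^\infty}+|\partial_y^{n-1}w_\varepsilon|_{C^\alpha}\le C\varepsilon^{1-\alpha}$, the H\"older term being dominant for small $\varepsilon$ since $1-\alpha<1$ (so $\varepsilon^{1-\alpha}$ exceeds the $\varepsilon^{n-k}\le\varepsilon$ contributions). The constant $C$ depends only on $n$, $\alpha$ and finitely many derivatives of the fixed cutoff $\chi$. Since $\varepsilon^{1-\alpha}\to 0$, it then suffices to choose $\varepsilon_1>0$ so small that $C\varepsilon_1^{1-\alpha}<\tfrac12\varepsilon_0$, and the claimed bound holds for all $\varepsilon\in(0,\varepsilon_1)$. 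I expect the only genuinely delicate point to be the interpolation step for the top H\"older seminorm; the sup-norm scalings are a routine Leibniz computation.
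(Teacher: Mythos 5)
Your proposal is correct, and its skeleton coincides with the paper's: the same Leibniz expansion of $\partial_y^k\bigl(y^n(1-\chi(y/\varepsilon))\bigr)$ combined with the $O(\varepsilon)$ width of the support yields $\Vert\partial_y^k(v-v_\varepsilon)\Vert_{L^\infty}\lesssim\varepsilon^{n-k}$ for $0\le k\le n-1$, exactly as in the paper. The one place you diverge is the H\"older seminorm of the top retained derivative: the paper estimates it term by term using the scaling identity $|f(\cdot/\varepsilon)|_{C^\alpha}=\varepsilon^{-\alpha}|f|_{C^\alpha}$ applied to fixed profiles such as $\eta\mapsto\eta(1-\chi(\eta))$ and $\eta\mapsto\eta^{i+1}\chi^{(i)}(\eta)$, whereas you invoke the interpolation inequality $|g|_{C^\alpha}\lesssim\Vert g\Vert_{L^\infty}^{1-\alpha}\Vert g'\Vert_{L^\infty}^{\alpha}$ with $\Vert g\Vert_{L^\infty}\lesssim\varepsilon$ and $\Vert g'\Vert_{L^\infty}\lesssim 1$. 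Both routes are valid and land on the same $\varepsilon^{1-\alpha}$ bound; your interpolation step is slightly more economical since it avoids checking the H\"older scaling of each Leibniz term separately, at the mild cost of also needing the $k=n$ sup-norm bound $\Vert\partial_y^n(v-v_\varepsilon)\Vert_{L^\infty}\lesssim 1$, which your Leibniz computation does supply.
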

\begin{proof}
For all $0\leq k \leq n-1$, a direct computation shows that
\begin{align*}
\Vert \partial_y^k(v - v_\varepsilon) \Vert_{L^\infty} &\leq \sum_{i=0}^{k} {k \choose i} \left\Vert \partial_y^{k-i}v(y)\partial_y^i\left( 1 - \chi\left(\frac{y}{\varepsilon} \right) \right) \right\Vert_{L^{\infty}} \\
&\lesssim_k \left\Vert y^{n-k}\left( 1 - \chi\left(\frac{y}{\varepsilon} \right)\right)\right\Vert_{L^\infty} +  \sum_{i=1}^{k}  \varepsilon^{-i}\left\Vert y^{n-k+i}\chi^{(i)}\left(\frac{y}{\varepsilon}  \right) \right\Vert_{L^{\infty}}
\end{align*}
Recalling that $\chi(\eta)=1$ for $|\eta|\geq 2 $ and $\chi^{(i)}(\eta) = 0$ for $|\eta|\not\in(1,2)$ we conclude that 
\be
\Vert \partial_y^k(v - v_\varepsilon) \Vert_{L^\infty} \leq C_k \varepsilon^{n-k},
\ee
for some $C_k>0$. On the other hand, we have that
\be
\left| \partial_y^{n-1}(v - v_\varepsilon) \right|_{C^\alpha}\lesssim_n \left| y\left( 1 - \chi\left(\frac{y}{\varepsilon}\right) \right) \right|_{C^\alpha} + \sum_{i=1}^{n-1}\left| y^{1+i}\varepsilon^{-i}\chi^{(i)}\left(\frac{y}{\varepsilon} \right) \right|_{C^\alpha}, 
\ee
where further, together with the properties of the smooth cut-off $\chi$, we have that 
\begin{align*}
\left| y\left( 1 - \chi\left(\frac{y}{\varepsilon}\right) \right) \right|_{C^\alpha} &= \varepsilon \left| \frac{y}{\varepsilon}\left( 1 - \chi\left(\frac{y}{\varepsilon}\right) \right) \right|_{C^\alpha} = \varepsilon^{1-\alpha}\left| \eta\left( 1 - \chi\left(\eta\right) \right) \right|_{C^\alpha} \lesssim \varepsilon^{1-\alpha}, 
\end{align*}
and similarly
\be
\sum_{i=1}^{n-1}\left| y^{1+i}\varepsilon^{-i}\chi^{(i)}\left(\frac{y}{\varepsilon} \right) \right|_{C^\alpha} = \varepsilon \sum_{i=1}^{n-1}\left| \left( \frac{y}{\varepsilon}\right)^{i+1}\chi^{(i)}\left(\frac{y}{\varepsilon} \right) \right|_{C^\alpha} \lesssim_n \varepsilon^{1-\alpha}.
\ee
The lemma follows from the definition of the $C^{n,\alpha}$ norm and choosing $\varepsilon_1$ sufficiently small. 
\end{proof}
Now that we have created a region in space with zero background velocity, we embed there a compactly supported radial velocity field, which is given by a suitably scaled smooth radial profile $\mathbf{U}:[0,1]\rightarrow \R$ such that $\mathbf{U}(r)=0$, for $r>\tfrac34$, all odd derivatives of $r\mathbf{U}(r)$ vanish at 0 and $\Vert (r^{-1}\partial_r)^{k}\mathbf{U}(r)\Vert_{L^\infty}< +\infty$, for all $0\leq k \leq n$. For convenience, we denote by
\begin{equation}
G(r) := \int_0^r s\mathbf{U}(s)\rmd s
\end{equation}
the stream function associated to the radial velocity field $(-y\mathbf{U}(r), x \mathbf{U}(r))$.
\begin{lemma}\label{lemma:radialstream}
Let $\mathbf{x}=(x,y)$ and $\Psi_\ve(\mathbf{x}) = A\ve^{n+2}F\left(\frac{\mathbf{x}}{\ve}\right)$, where $F(\mathbf{x}) = G(|\mathbf{x}|)$. Then, 
\begin{equation}
\Vert \Psi_\ve \Vert_{C^{n+1}(B_1)}\leq A\ve \Vert F \Vert_{C^{n+1}(B_\ve)}< \infty
\end{equation}
and the velocity field $w_\ve(\mathbf{x})=\nabla^\perp\Psi_{\ve}(\mathbf{x})$ is a radially smooth, compactly supported steady configuration of 2d Euler such that $\Vert w_\ve \Vert_{C^n}\lesssim \ve$.
\end{lemma}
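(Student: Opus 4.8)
The plan is to use that $\Psi_\ve$ is \emph{radial}, which makes the associated flow automatically steady, and to reduce the two quantitative estimates to a single regularity statement for the radial profile $G$ at the origin. I begin with the elementary identities. Since $G'(r) = r\mathbf{U}(r)$, differentiating $F(\x) = G(|\x|)$ gives $\nabla F = G'(|\x|)\,\x/|\x| = \mathbf{U}(|\x|)\,\x$, so the velocity attached to $F$ is the purely azimuthal field $\nabla^\perp F = \mathbf{U}(|\x|)(-y,x)$. By the hypothesis $\mathbf{U}(r) = 0$ for $r > \tfrac34$ this field vanishes outside $\overline{B_{3/4}}$, and after rescaling $w_\ve(\x) = \nabla^\perp\Psi_\ve(\x) = A\ve^{\,n+1}(\nabla^\perp F)(\x/\ve)$ is supported in $\overline{B_{3\ve/4}}$, giving the compact support claim.

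Steadiness is then automatic: because $\Psi_\ve$ is a function of $r = |\x|$ alone, so is its vorticity $\Delta\Psi_\ve$, hence $\nabla\Delta\Psi_\ve$ is parallel to $\x$ while $\nabla^\perp\Psi_\ve$ is orthogonal to $\x$; thus the Poisson bracket $\{\Psi_\ve,\Delta\Psi_\ve\} = \nabla^\perp\Psi_\ve\cdot\nabla\Delta\Psi_\ve$ vanishes identically. In other words the vorticity is constant along the circular streamlines, so $w_\ve$ is a stationary Euler flow.

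The substantive point is the $C^{n+1}$ regularity of $F(\x) = G(|\x|)$ \emph{across the origin}; away from $r = 0$ it is manifestly smooth. Here I would work with the operator $L := r^{-1}\partial_r$ and the identity $LG = r^{-1}G' = \mathbf{U}$, so that $L^{k+1}G = L^k\mathbf{U}$ and the assumptions $\|L^k\mathbf{U}\|_{L^\infty} < \infty$ for $0 \le k \le n$ become $\|L^m G\|_{L^\infty} < \infty$ for $1 \le m \le n+1$. The vanishing of the odd derivatives of $r\mathbf{U}(r)$ at $0$ forces $G$ to extend to an even profile with matched Taylor data, which is exactly the classical criterion for a radial function to be $C^{n+1}$ at the origin. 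Concretely, each Cartesian derivative expands as a finite sum $\partial^\beta F = \sum_j (L^j G)(|\x|)\,P_{\beta,j}(\x)$ with $P_{\beta,j}$ homogeneous of degree $2j - |\beta|$; the terms of nonnegative degree are bounded on $B_1$ by $\|L^j G\|_{L^\infty}$, and the finitely many terms of negative degree are controlled using the vanishing of $L^j G$ to the corresponding order at $0$. This yields $\|F\|_{C^{n+1}(B_1)} < \infty$, and a fortiori $\|F\|_{C^{n+1}(B_\ve)} < \infty$. This origin regularity is the main obstacle, and it is precisely what the otherwise unusual hypotheses on $\mathbf{U}$ are engineered to provide.

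Finally, both bounds follow by scaling. For a multi-index with $|\beta| = k \le n+1$ one has $\partial^\beta\Psi_\ve(\x) = A\ve^{\,n+2-k}(\partial^\beta F)(\x/\ve)$, and since $\ve < 1$ and $n+2-k \ge 1$ the prefactor is at most $A\ve$; summing over $|\beta| \le n+1$ gives $\|\Psi_\ve\|_{C^{n+1}(B_1)} \le A\ve\,\|F\|_{C^{n+1}(B_\ve)} < \infty$, the first assertion. Since $w_\ve = \nabla^\perp\Psi_\ve$ costs exactly one derivative, $\|w_\ve\|_{C^n} \le \|\Psi_\ve\|_{C^{n+1}(B_1)} \le A\,\|F\|_{C^{n+1}(B_1)}\,\ve \lesssim \ve$, which is the last claim.
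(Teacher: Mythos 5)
Your proof is correct and follows essentially the same route as the paper's: the gradient identity $\nabla F=\mathbf{U}(|\x|)\x$, iteration of $r^{-1}\partial_r$ to control the Cartesian derivatives of the radial profile through the origin, and the scaling $\partial^\beta\Psi_\ve=A\ve^{n+2-|\beta|}(\partial^\beta F)(\x/\ve)$. You are somewhat more explicit than the paper (the homogeneous expansion $\partial^\beta F=\sum_j (L^jG)P_{\beta,j}$ and the Poisson-bracket check of steadiness), but these are elaborations of the same argument rather than a different one.
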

\begin{proof}
Let us begin by noting that $G'(r)=r\mathbf{U}(r)$. Then,
\begin{equation}\label{eq:def grad F}
\partial_x F(\mathbf{x}) = \frac{G'(|\mathbf{x}|)}{|\mathbf{x}|}x=\mathbf{U}(|\mathbf{x}|)x, \quad \partial_y  F(\mathbf{x}) = \frac{G'(|\mathbf{x}|)}{|\mathbf{x}|}y = \mathbf{U}(|\mathbf{x}|)y.
\end{equation}
Since $\mathbf{U}(\cdot)$ is compactly supported in $[0,1]$, a repeated iteration shows that for all $1\leq k \leq n+1 $ and all $i+j=k$, 
\begin{equation}
|\partial_x^i \partial_y^{j}F(\mathbf{x})| \lesssim \sum_{l=0}^{k-1} \left| \left(\frac{1}{r}\partial_r\right)^{l}\mathbf{U}(r)\right| < +\infty,
\end{equation}
where we denoted $r=|\mathbf{x}|$. Hence, $\Vert F \Vert_{C^{n+1}}< \infty$ and \eqref{eq:def grad F} shows that 
\begin{equation}
w_\ve(\mathbf{x})=\nabla^\perp\Psi_\ve(\mathbf{x})=\mathbf{U}\left(\frac{|\mathbf{x}|}{\ve}\right)(-y,x)
\end{equation} 
is a smooth radial velocity field with compact support and $\Vert w_\ve \Vert_{C^n}\lesssim \ve$.
\end{proof}
We are now in position to prove the flexibility of the shear flow.
\begin{proof}[Proof of Flexibility]
For $0< \ve < \frac14$, we construct the velocity field 
\be
u_\varepsilon(\mathbf{x}) = (v_\varepsilon(y),0) + w_\varepsilon(\mathbf{x})
\ee
and we readily observe that thanks to the compact support of $\mathbf{U}$, we that for all $|\mathbf{x}|>\frac34\ve$, $w_\ve(\mathbf{x})\equiv 0$ and thus $u_\varepsilon(\mathbf{x}) = (v_\varepsilon(y),0)$. We shall show that $u_\ve$ is a stationary solution of Euler in $\mathbb{T}\times[-1,1]$. The slip boundary conditions at $y=\pm 1$ and the periodic boundary conditions follow from the compact support of $w_\ve$. As for the stationarity of the flow, since $v_\ve(y)\equiv 0$, for all $|y|\leq \ve$ and $w_\ve(\mathbf{x})=0$ for all $|\mathbf{x}|>\frac34\ve$, the two velocity field do not interact at all, and each of them is stationary under the Euler equations ($(v_\ve(y),0)$ is a shear flow and $w_\ve(\mathbf{x})$ is a radial flow) and so we conclude that $u_\varepsilon$ is a steady solution of the Euler equations. Finally, it remains to check $u_\varepsilon$ is close to the shear flow $u_*(x,y)=(y^n,0)$. Indeed, from Lemma \ref{lemma:Cnalpha approx} and Lemma \ref{lemma:radialstream} we have
\be
\Vert u_* - u_\varepsilon \Vert_{C^{n-1,\alpha}} = \Vert y^n -v_\varepsilon(y) \Vert_{C^{n-1,\alpha}} + \Vert w_\varepsilon(x,y) \Vert_{C^{n-1,\alpha}} < \varepsilon_0,
\ee
for $\varepsilon>0$ sufficiently small. The proof is concluded.
\end{proof}

The above construction can be generalized to admit shear flows that are not power laws, but locally behave like so. In particular, consider velocity profiles $u_*(x,y)=(v(y),0)$ such that $v\in C^{n,1}$ and assume there exists $y_0\in(-1,1)$ such that 
\begin{equation}\label{eq:cond v}\tag{V}
\sum_{i=0}^{n-1} |v^{(i)}(y_0)|=0, \quad v^{(n)}(y_0)\neq 0.
\end{equation}
To carry out the perturbation argument, we need good estimates on $v(y)$ around the critical stationary point $y_0$, we need to control the Taylor expansion of the shear flow $v(y)$ nearby $y_0$.
\begin{lemma}
Let $v(y)\in C^{n+1}$ satisfy \eqref{eq:cond v}. Then, for all $0\leq k \leq n-1$, there exists functions $h_k(y)\in {C}(-1,1)$ such that
\begin{equation}
v^{(k)}(y) = \frac{1}{(n-k)!}v^{(n)}(y_0)(y-y_0)^{n-k} + h_k(y)(y-y_0)^{n-k},
\end{equation}
where each $h_k\in C^1(-1,1)$ and is given by
\begin{equation}
h_k(y) :=\frac{1}{(y-y_0)^{n-k}}\int_{y_0}^{y}\int_{y_0}^{t_{k+1}}\ldots \int_{y_0}^{t_n} v^{(n+1)}(t_{n+1}) \rmd t_{n+1} \ldots \rmd t_{n-k}.
\end{equation}
\end{lemma}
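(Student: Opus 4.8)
The plan is to read the stated decomposition as Taylor's formula with integral remainder, applied to each derivative $v^{(k)}$ rather than to $v$ itself, and then to read off the $C^1$ regularity by a division argument. First I would record the consequences of \eqref{eq:cond v}: writing $g_k := v^{(k)}$, hypothesis \eqref{eq:cond v} gives $g_k^{(j)}(y_0) = v^{(k+j)}(y_0) = 0$ for all $0 \le j \le n-k-1$, while $g_k^{(n-k)}(y_0) = v^{(n)}(y_0) \neq 0$; moreover $g_k \in C^{\,n+1-k}(-1,1)$ since $v \in C^{n+1}$. Thus $g_k$ vanishes to order exactly $n-k$ at $y_0$, which is precisely the order of the factor $(y-y_0)^{n-k}$ appearing in the statement.

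Next I would expand $g_k$ about $y_0$ to order $n-k$ using the integral form of the remainder. Because every intermediate Taylor coefficient $g_k^{(j)}(y_0)$ with $j < n-k$ vanishes, only the top coefficient survives and one obtains
\begin{equation}
v^{(k)}(y) = \frac{v^{(n)}(y_0)}{(n-k)!}(y-y_0)^{n-k} + \frac{1}{(n-k)!}\int_{y_0}^{y}(y-t)^{n-k}\,v^{(n+1)}(t)\,\rmd t.
\end{equation}
By the Cauchy formula for repeated integration, the single-integral remainder coincides with the $(n-k+1)$-fold iterated integral of $v^{(n+1)}$ based at $y_0$, which is exactly the numerator in the claimed expression for $h_k$. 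Dividing the remainder by $(y-y_0)^{n-k}$ then defines $h_k$ and simultaneously delivers the decomposition and the integral formula of the statement; in particular $h_k(y_0)=0$, and $h_k$ is manifestly $C^{\,n+1-k}$ away from $y_0$, being a quotient of $C^{\,n+1-k}$ functions with nonvanishing denominator there.

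The hard part will be the assertion $h_k \in C^1(-1,1)$ up to the point $y_0$, where the defining quotient is of the form $0/0$. Here I would use a division lemma: if $f \in C^m$ has its first $p$ derivatives vanishing at $y_0$ with $p \le m$, then $f/(\cdot-y_0)^p$ extends to a $C^{m-p}$ function. The cleanest route is the rescaling $t = y_0 + \tau(y-y_0)$, which turns the remainder quotient into
\begin{equation}
h_k(y) = \frac{1}{(n-k-1)!}\int_0^1 (1-\tau)^{\,n-k-1}\left(\int_{y_0}^{\,y_0+\tau(y-y_0)} v^{(n+1)}(r)\,\rmd r\right)\rmd\tau,
\end{equation}
an average of the continuous function $v^{(n+1)}$ against a fixed smooth kernel. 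Differentiating once under the integral sign in $y$ yields $\tfrac{1}{(n-k-1)!}\int_0^1(1-\tau)^{\,n-k-1}\tau\,v^{(n+1)}(y_0+\tau(y-y_0))\,\rmd\tau$, which is continuous in $y$, whence $h_k \in C^1(-1,1)$. Equivalently, the $(n-k+1)$-fold iterated integral $I$ of $v^{(n+1)}\in C^0$ is of class $C^{\,n-k+1}$ and vanishes to order $n-k$ at $y_0$, so the division lemma with $p=n-k$ gives $h_k = I/(y-y_0)^{n-k}\in C^1$ directly; the bookkeeping makes visible that the single surviving derivative is exactly what the hypothesis $v\in C^{n+1}$ affords.
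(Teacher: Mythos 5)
Your proof is correct and follows essentially the same route as the paper: both extract the leading Taylor coefficient using the vanishing of $v^{(j)}(y_0)$ for $j<n$ and identify $h_k$ with the integral remainder, your single-integral form and the paper's iterated integral coinciding by Cauchy's repeated-integration formula, exactly as you note. The only place you go beyond the paper is the $C^1$ regularity of $h_k$ across $y_0$: the paper simply asserts it from $v\in C^{n+1}$, whereas your rescaling $t=y_0+\tau(y-y_0)$ followed by differentiation under the integral sign supplies an honest verification of that claim.
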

\begin{proof}
Note that since $\partial_y^{k}v(y_0)=0$, for all $0\leq k \leq n-1$ and $\partial_y^n v(y_0) \neq 0$, we have
\begin{align}
v^{(k)}(y) &= \int_{y_0}^y v^{(k+1)}(t_{k+1}) \rmd t_{k+1} \\
&= \int_{y_0}^y\int_{y_0}^{t_{k+1}} \ldots\int_{y_0}^{t_{n-1}} v^{(n)}(t_{n}) \rmd t_{k+1}\ldots \rmd t_{n} \\
&=  \frac{v^{(n)}(y_0)}{(n-k)!}(y-y_0)^{n-k} + \int_{y_0}^{y}\int_{y_0}^{t_{k+1}}\ldots \int_{y_0}^{t_n} v^{(n+1)}(t_{n+1}) \rmd t_{n+1} \ldots \rmd t_{n-k}.
\end{align}
In particular, $v\in C^{n+1}([-1,1])$ shows that $h_k(y)$
is a $C^1$ function. The proof is concluded.
\end{proof}
Once the asymptotic properties of shear flow are established, we are in position to prove the flexibility of the steady state.
\begin{cor}\label{cor:genflex}
Let $u_*(x,y)=(v(y),0)$, with $v\in C^{n+1}$ and such that \eqref{eq:cond v} holds for some $y_0\in (-1,1)$. 
Then, there exist non-shear stationary states, time periodic and quasiperiodic of any number of non-commensurate frequencies $u(t)$ such that $\Vert u - u_* \Vert_{C^{n-}}< \varepsilon$.
\end{cor}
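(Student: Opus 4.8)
The plan is to mimic the proof of Theorem \ref{flexthm} essentially verbatim, replacing the exact power law $y^n$ by the local behaviour of $v$ at $y_0$. Translating so that $y_0$ plays the role of the origin, the Taylor decomposition lemma just established shows that $v$ vanishes to order $n$ at $y_0$ in the quantitative form $v^{(k)}(y) = \frac{v^{(n)}(y_0)}{(n-k)!}(y-y_0)^{n-k} + h_k(y)(y-y_0)^{n-k}$ with $h_k \in C^1$. This is exactly the structure of the power law $v^{(n)}(y_0)(y-y_0)^n/n!$ up to a $C^1$-bounded multiplicative correction, so I expect the cut-off argument to transfer with only cosmetic changes.

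First I would define the cut-off shear $v_\varepsilon(y) = v(y)\chi((y-y_0)/\varepsilon)$, which coincides with $v$ for $|y-y_0|\ge 2\varepsilon$ and vanishes identically on $\{|y-y_0|\le\varepsilon\}$. The key step is to reprove Lemma \ref{lemma:Cnalpha approx} in this setting, namely $\|v - v_\varepsilon\|_{C^{n-1,\alpha}}\to 0$ as $\varepsilon\to 0$ for every $\alpha\in(0,1)$. Writing $v-v_\varepsilon = v\,(1-\chi((y-y_0)/\varepsilon))$ and applying Leibniz, every term pairs a derivative $\partial_y^{k-i}v$ with a derivative of $1-\chi$; substituting the decomposition above for $\partial_y^{k-i}v$ and rescaling by $\eta = (y-y_0)/\varepsilon$ exactly as in Lemma \ref{lemma:Cnalpha approx}, the powers of $\varepsilon$ conspire to give $\|\partial_y^k(v-v_\varepsilon)\|_{L^\infty}\lesssim\varepsilon^{n-k}$ and, on the top derivative, a Hölder bound $|\partial_y^{n-1}(v-v_\varepsilon)|_{C^\alpha}\lesssim\varepsilon^{1-\alpha}$. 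The only feature not present in the power-law case is the factors $h_k$, which are harmless since the $C^\alpha$ seminorm of a product is controlled by the $C^{0,\alpha}$ norms of its factors and $h_k\in C^1\subset C^{0,\alpha}$; this is exactly where the hypothesis $v\in C^{n+1}$ enters.

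With this approximation in hand, the remainder is identical to Theorem \ref{flexthm}. I would embed the compactly supported radial field $w_\varepsilon = \nabla^\perp\Psi_\varepsilon$ of Lemma \ref{lemma:radialstream}, recentred at a point $(x_0,y_0)$, inside the zero-velocity strip $\{|y-y_0|<\varepsilon\}$; since $\supp w_\varepsilon \subset B_{3\varepsilon/4}(x_0,y_0)$ this fits, and the two fields never interact because $v_\varepsilon\equiv 0$ on the support of $w_\varepsilon$ while $w_\varepsilon\equiv 0$ wherever $v_\varepsilon\not\equiv 0$. Hence $u_\varepsilon = (v_\varepsilon,0) + w_\varepsilon$ is a genuine non-shear steady Euler solution with $\|u_* - u_\varepsilon\|_{C^{n-1,\alpha}}\le \|v - v_\varepsilon\|_{C^{n-1,\alpha}} + \|w_\varepsilon\|_{C^{n-1,\alpha}} < \varepsilon_0$ by the estimate above and Lemma \ref{lemma:radialstream}. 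For the time-periodic and quasiperiodic solutions I would choose the profile $\mathbf{U}$ so that $w_\varepsilon$ contains one (resp. $k$ nested) annular region(s) of solid-body rotation, i.e. $\mathbf{U}\equiv\Omega_j$ constant on prescribed annuli. Inside such an annulus the background velocity is exactly rigid rotation $\Omega_j(-y,x)$ and the vorticity is constant, so an off-centre compactly supported radial vortex placed there satisfies $(u-\Omega_j(-y,x))\cdot\nabla\omega=0$ and is transported rigidly at rate $\Omega_j$; nesting $k$ levels and choosing $\Omega_1,\dots,\Omega_k$ rationally independent produces a quasiperiodic solution with $k$ non-commensurate frequencies, all supported in $\{|y-y_0|<\varepsilon\}$. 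Since each piece moves by a rigid rotation about an interior centre, the $C^{n-}$ norms of the time-dependent part are constant in time, so the proximity $\|u(t)-u_*\|_{C^{n-}}<\varepsilon_0$ persists for all $t$.

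The main obstacle I anticipate is purely the first step, namely checking that the correction terms $h_k(y)(y-y_0)^{n-k}$ do not spoil the Hölder estimate on the top derivative $\partial_y^{n-1}(v-v_\varepsilon)$; once one verifies that $h_k\in C^1$ suffices to bound the relevant $C^\alpha$ seminorms uniformly as $\varepsilon\to 0$, the rigid-rotation and superposition part is a routine repetition of the self-contained construction already carried out for $u_* = (y^n,0)$.
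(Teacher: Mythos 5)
Your proposal is correct and follows essentially the same route as the paper: the same cut-off $v_\varepsilon(y)=v(y)\chi((y-y_0)/\varepsilon)$, the same use of the Taylor decomposition $v^{(k)}(y)=\frac{v^{(n)}(y_0)}{(n-k)!}(y-y_0)^{n-k}+h_k(y)(y-y_0)^{n-k}$ with $h_k\in C^1$ fed into the Leibniz/rescaling estimates of Lemma \ref{lemma:Cnalpha approx}, yielding $\|\partial_y^k(v-v_\varepsilon)\|_{L^\infty}\lesssim\varepsilon^{n-k}$ and $|\partial_y^{n-1}(v-v_\varepsilon)|_{C^\alpha}\lesssim\varepsilon^{1-\alpha}$, followed by superposition with the compactly supported radial field. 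The one point you flag as a potential obstacle --- that $h_k\in C^1$ suffices to control the H\"older seminorms --- is handled in the paper exactly as you anticipate, via $\Vert h_k\Vert_{C^\alpha}\lesssim\Vert h_k\Vert_{C^1}$ and the scaling $\Vert f(\lambda\,\cdot)\Vert_{C^\alpha}=\lambda^\alpha\Vert f\Vert_{C^\alpha}$.
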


\begin{proof}
We shall again approximate the shear flow by the smooth cut-off $v_\varepsilon(y) = v(y)\chi\left(\frac{y-y_0}{\varepsilon}\right)$ and introduce a radial velocity field compactly supported on the small region where we approximate the shear flow by the zero profile. In particular, the result follows once we show that for all $0<\alpha< 1$ there holds
\be
\Vert v - v_\varepsilon \Vert_{C^{n-1,\alpha}(-1,1)} < \frac12 \varepsilon.
\ee
for all $\ve>0$ sufficiently small. Indeed, since
\begin{equation}
\Vert v - v_\varepsilon \Vert_{C^{n-1,\alpha}} = \sum_{i=0}^{n-1} \left\Vert \partial_y^i \left( v - v_\varepsilon\right) \right\Vert_{L^\infty} + \left\Vert \partial_y^{n-1} \left(v - v_\varepsilon \right) \right\Vert_{C^\alpha} 
\end{equation}
Now, denote $g(s)=1-\chi(s)$. For all $0\leq i \leq n-1$,
\begin{align*}
\Vert \partial_y^i(v - v_\varepsilon) \Vert_{L^\infty} &\leq \sum_{k=0}^{i} {i \choose k} \left\Vert v^{(k)}(y)\partial_y^{i-k}g\left(\tfrac{y-y_0}{\ve}\right)\right\Vert_{L^{\infty}} \\
&\lesssim \sum_{k=0}^{i} \ve^{-(i-k)}\left\Vert \left(v^{(n)}(y_0)(y-y_0)^{n-k} + h_k(y)(y-y_0)^{n-k}\right) g^{(i-k)}\left( \tfrac{y-y_0}{\ve}\right) \right\Vert_{L^\infty} \\
&\lesssim \Vert v \Vert_{C^{n+1}} \left\Vert \left(\frac{y-y_0}{\ve} \right)^{n-k}g^{(i-k)}\left( \frac{y-y_0}{\ve}\right) \right\Vert_{L^\infty} \ve^{n-i} \\
&\lesssim \ve^{n-i}.
\end{align*}
Similarly, we see that 
\begin{align*}
\Vert \partial_y^{n-1}(v - v_\varepsilon) \Vert_{C^\alpha} &\lesssim \sum_{k=0}^{n-1} \ve^{-(n-1-k)}\left\Vert \left(v^{(n)}(y_0)(y-y_0)^{n-k} + h_k(y)(y-y_0)^{n-k}\right) g^{(n-1-k)}\left( \tfrac{y-y_0}{\ve}\right) \right\Vert_{C^\alpha} \\
&\lesssim \Vert v \Vert_{C^{n+1}} \left\Vert \left(\frac{y-y_0}{\ve} \right)^{n-k}g^{(n-1-k)}\left( \frac{y-y_0}{\ve}\right) \right\Vert_{C^\alpha} \ve\lesssim \ve^{1-\alpha}.
\end{align*}
Here, we used  that $\Vert h_k \Vert_{C^\alpha}\lesssim \Vert h_k \Vert_{C^1}$, for all $0\leq \alpha< 1$ and the scaling $\Vert f(\lambda x) \Vert_{C^{\alpha}} = \lambda^\alpha \Vert f \Vert_{C^\alpha}$. 
\end{proof}

The above construction exploits the degeneracy of the velocity field; the perturbation is introduced in a small neighborhood of points where the background shear flow vanishes. In a sense, the location and nature of the perturbation is sharp, as can be seen from the following result, which is a straightforward consequence of  the result of Hamel and Nadirashvili  \cite{hamel2017shear, hamel2019liouville, HM23}.
\begin{cor}\label{cor:locshearrig}
Let $u_*(x,y) = (v(y),0)$ , with $v\in C^n$. Assume further that $v(0)=0$, $|v(y)|>0$, for all $y\neq 0$ and let $\delta>0$. Then, there exists $\ve_0>0$ such that any steady solution $u(x,y)$ of Euler for which $u(x,y)=u_*(x,y)$ in $(x,y)\in \mathbb{T}\times (-2\delta,2\delta)$ and 
\be
\Vert u - u_* \Vert_{L^\infty} < \ve_0 
\ee
must necessarily be a shear flow, that is $u(x,y)=(\tilde{v}(y),0)$.
\end{cor}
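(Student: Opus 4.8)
The plan is to localize the problem away from the single stagnation line $\{y=0\}$, where by hypothesis the perturbation is switched off, and then to apply the non-vanishing rigidity of Hamel and Nadirashvili \cite{hamel2017shear, hamel2019liouville, HM23} on the two sub-channels that sit strictly above and below this line, finally gluing the pieces back together with the unperturbed middle region.

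First I would dispose of the trivial case $2\delta\geq 1$, in which $u\equiv u_*$ on all of $\mathbb{T}\times[-1,1]$ and there is nothing to prove; so assume $\delta<1/2$. Since $v\in C^n$ and $|v(y)|>0$ for every $y\neq 0$, the compact sets $[\delta,1]$ and $[-1,-\delta]$ contain no zero of $v$, whence $m:=\min_{\delta\leq |y|\leq 1}|v(y)|>0$. Choosing $\ve_0<m$, any steady state $u$ with $\Vert u-u_*\Vert_{L^\infty}<\ve_0$ satisfies, for all $|y|\geq\delta$,
\be
|u(x,y)|\geq |u_*(x,y)|-\Vert u-u_*\Vert_{L^\infty} = |v(y)|-\ve_0 \geq m-\ve_0 > 0,
\ee
so $u$ is non-vanishing on $\mathbb{T}\times\left([\delta,1]\cup[-1,-\delta]\right)$.

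Next I would check that each of $\mathbb{T}\times[\delta,1]$ and $\mathbb{T}\times[-1,-\delta]$ is a genuine periodic channel on which $u$ is a steady Euler flow tangent to both of its horizontal boundaries. On $y=\pm 1$ this is the original slip condition. On $y=\pm\delta$, note that $\pm\delta\in(-2\delta,2\delta)$, so there $u=u_*=(v(\pm\delta),0)$ has vanishing vertical component; hence $\{y=\pm\delta\}$ is a union of streamlines and $u\cdot\hat n=0$ holds there as well. Since each sub-channel therefore carries a non-vanishing steady solution tangent to its horizontal boundaries, the Hamel--Nadirashvili theorem applies and forces $u$ to be a shear flow on each, say $u(x,y)=(\tilde v_+(y),0)$ on $\mathbb{T}\times[\delta,1]$ and $u(x,y)=(\tilde v_-(y),0)$ on $\mathbb{T}\times[-1,-\delta]$.

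Finally I would glue. On the nonempty overlap $\mathbb{T}\times(\delta,2\delta)$ we have simultaneously $u=u_*=(v(y),0)$, from the hypothesis, and $u=(\tilde v_+(y),0)$, so that $\tilde v_+\equiv v$ there; likewise $\tilde v_-\equiv v$ on $(-2\delta,-\delta)$. By continuity these three shear descriptions agree on their overlaps and assemble into a single profile $\tilde v$ on $[-1,1]$ with $u(x,y)=(\tilde v(y),0)$ everywhere, which is the claim. The only genuinely nontrivial input is the Hamel--Nadirashvili non-vanishing rigidity, and the one point to be careful about is that its hypotheses --- non-vanishing velocity, tangency to the (now interior) horizontal boundaries $\{y=\pm\delta\}$, and the requisite $C^2$ regularity of $u$ --- are indeed all met on the two sub-channels; everything else is elementary positivity and the matching of shear profiles on overlaps.
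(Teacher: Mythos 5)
Your proof is correct and follows essentially the same route as the paper: cut off $\ve_0$ below $\min_{\delta\le|y|\le 1}|v|$, observe that $\{y=\pm\delta\}$ are streamlines because $u=u_*$ there, split $M$ into the two sub-channels where $u$ is non-vanishing, and invoke the non-vanishing rigidity result (the paper cites Theorem 1.1 of \cite{CDG21}, equivalent in this role to the Hamel--Nadirashvili results you use). Your explicit gluing of the shear profiles on the overlaps with the unperturbed band is a small but welcome addition that the paper leaves implicit.
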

As will be clear from the proof, one may generalize the result to include background shear flow profiles $v(y)$ that vanish in a finite number of points (or even in whole subintervals of $(-1,1)$), by requiring the velocity perturbation to be compactly supported away from the zeroes of the profile.
\begin{proof}
Let 
$$m = \min_{y\in[-1,-\delta]\cup [\delta, 1]}|v(y)|>0$$ 
and choose $\ve_0=\frac{m}{2}$. Then, since $u(x,y) = u_*(x,y) = (v(y),0)$ for all $(x,y)\in \mathbb{T}\times (-2\delta,2\delta)$, we observe that the straight line 
$$\Gamma_{\delta}=\lbrace(x,\delta)\rbrace_{x\in\mathbb{T}}$$
is a fluid boundary for $u(x,y)$, since $u(x,y)|_{\Gamma_\delta} = (v(\delta),0)$, it is tangential to $\Gamma_\delta$. The same behavior occurs for $\Gamma_{-\delta}$ and, in particular, we can decompose our domain $M=\mathbb{T}\times[-1,1]$ and look at the Euler equations on each subdomain $M_+=\mathbb{T}\times[\delta,1]$ and $M_-=\mathbb{T}\times[-1,\delta]$. Moreover,
\begin{equation}
\inf_{M_+\cup M_-}|u| \geq \inf_{M_+\cup M_-}|u_*| - \ve_0 = \frac{m}{2}>0,
\end{equation}
so that, in particular, the infimum on each $M_+$ and $M_-$ is strictly positive. Then, Theorem 1.1 of \cite{CDG21} applied to each $M_+$ and $M_-$ separately ensures that $u(x,y)$ is a shear flow on each $M_+$ and $M_-$. The proof is concluded.
\end{proof}
Some remarks on limitations and its generalizations are in order.

\begin{remark}[Sharpness]
{In the case of Couette flow ($n=1$, constant vorticity), the rigidity/flexibility with Sobolev topology established by \cite{LinZeng} is sharp and their H\"{o}lder regularity is better than $C^{1-}$ that we prove (it is flexible in $C^{1, 1/2-}$). We also remark that for shear flows that are not pure power laws, the regularity threshold can be increased. Indeed, there exists a family of strictly monotone shear flows for which our $C^{1-}$ threshold can be improved at least half a derivative more, see \cite{sinambela2023transition}. On the other hand, the H\"{o}lder regularity threshold $C^{1,\frac12-}$ of Poiseuille in \cite{CastroLear24b} falls short of the $C^{2-}$ threshold that we establish here and which we show to be sharp. Given the previous results, we expect the $C^{n-}$ flexibility to be sharp for pure power laws $v(y)=y^n$, at least for even $n\geq 2$. Namely, we suspect that, as in the Poiseuille case, there exists an $\ve:=\ve(u_*, M)$ such that all stationary solutions $u$ satisfying $\|u-(y^n,0)\|_{C^n(M)}<\ve$ are shear flows.}
\end{remark}	
	
\begin{remark}[Structural Instability]
The non-shear steady solutions that we present are not generic, in the sense that the stream-functions of such velocity fields are not Morse functions, they are expected to be unstable. This is to be compared with the works of \cite{CastroLear23, CastroLear24b, CastroLear24, LinZeng} where non-trivial Euler steady solutions are constructed. These are stable under perturbation, and exhibit truly two-dimensional features. 
\end{remark}	
	
\begin{remark}[Coliving Vortices]
The radial velocity field introduced in Lemma \ref{lemma:radialstream} is localized on the origin and its compact support is small, it is included in the ball of radius $\frac34\ve$. Thus, one may introduce additional radial velocity fields, with different velocity profiles $\mathbf{U}$ whose compact supports do not intersect each other and are included in the open band $M_\ve=\lbrace (x,y)\in M\,:\, |y| < \frac{9}{10}\ve\rbrace$. The resulting velocity field is then  a non-shear steady solution of Euler close to the background shear flow and composed of several vortices with different profiles and strengths living in the same stationary band.
\end{remark}

\begin{remark}[Travelling Wave Vortices]
Even though Corollary \ref{cor:locshearrig} shows that non-shear steady state perturbations must be non-trivial (non-vanishing) in regions close to the stationary points of the background shear flow, we may also find comoving structures (such as time-periodic traveling waves) concentrated away from these stationary points of the shear flow profile $v(y)$. Indeed, for any $y_0\in(-1,1)$ such that $v_0:=v(y_0)\neq 0$, the shear flow $\overline{v}(y)=v(y)-v_0$ vanishes at $y=y_0$ and thus, thanks to Corollary \ref{cor:genflex}, we can embed a small radial velocity field $w_\ve(x,y-y_0)$ resulting in the non-shear steady state $\overline{u}(x,y)=\chi(\frac{y-y_0}{\ve})(\overline{v}(y),0) + w_\ve(x,y-y_0)$. Hence, the traveling wave
\begin{equation}
u_\ve(t,x,y) = \overline{u}_\ve(x-v(y_0)t,y) + (v(y_0),0)
\end{equation}
is a solution to the Euler equations and it is $\ve>0$ close to the shear steady state $u_*=(v(y),0)$ in $C^{n,\alpha}(M)$ regularity, for all $0\leq\alpha<1$ and $n$ counting the number of derivatives of $v(y)$ vanishing at $y_0$. In particular, we can have a family of coliving vortices embedded now on a traveling wave band of speed $v(y_0)$. The method can also be generalized to allow more traveling wave bands, each of them with its own family of coliving vortices and its own wave speed. For example, for any $y_1\in(-1,1)$ sufficiently far from $y_0$, we may define
\begin{equation}
u_\ve^1(t,x,y) = \chi\left( \frac{y-y_1}{\ve}\right) \left( u_\ve(t,x-v(y_1)t,y)-(v(y_1),0)\right) + w^1_\ve(x-v(y_1)t,y-y_1) + (v(y_1),0),
\end{equation}
where now $u_\ve^1$ is a travelling wave solution of Euler of speed $v(y_1)$ with two travelling wave bands, one of them with relative wave speed $v(y_0)+v(y_1)$.
\end{remark}

\begin{remark}[Embedded Vortices]
For profiles $\mathbf{U}$ such that $\mathbf{U}(r)=1$, for all $\tfrac13 < r < \tfrac23$, the velocity field $u_\varepsilon=w_\varepsilon=A\epsilon^n r\mathrm{e}_\theta$ is simply a solid body rotation in that annular region. Hence, up to a rigid rotation of speed $A\ve^n$ to adjust the solid body motion, we can embed a family of coliving smaller vortices within the annular region. The overall resulting non-shear velocity field is no longer a steady but a periodic or quasiperiodic solution of Euler, depending on the angular velocities of all the solid body motions involved. One may even choose to embed any of these smaller vortices on any (or several) members of the family of coliving vortices sitting on the traveling wave bands discussed above.
\end{remark}

\begin{remark}[Locally non-radial Vortices]
{Instead of the radial velocity fields that we introduce in Lemma \ref{lemma:radialstream}, we can opt for placing there non-locally radial and $C^n$ regular compactly supported stationary Euler flows, whose existence was recently obtained in \cite{enciso2024smooth}. As such, the steady states that we find in Theorem \ref{flexthm} need not have any local symmetry.}
\end{remark}

To conclude, the above remarks show the dynamical richness and endless configurations of solutions to Euler nearby arbitrary shear flows in low regularity neighborhoods.

 \subsection*{Acknowledgments}   We thank T. M. Elgindi and F. Torres de Lizaur   for insightful remarks. We particular acknowledge T. M. Elgindi with whom we obtained the result on inviscid dynamical structures nearby shears (Theorem \ref{flexthm}) in collaboration.
The research of TDD was partially supported by the NSF DMS-2106233 grant, NSF CAREER award \#2235395, a Stony Brook University Trustee's award as well as an Alfred P. Sloan Fellowship. The research of MN was partially supported by the Doris Chen Mobility Award.

\bibliographystyle{abbrv}
\bibliography{refs}

\end{document}